\setlist[enumerate]{label={\upshape (\roman*)}}
\theoremstyle{plain}
\newtheorem{theorem}{\indent\sc Theorem}[section]
\newtheorem{lemma}[theorem]{\indent\sc Lemma}
\newtheorem{corollary}[theorem]{\indent\sc Corollary}
\newtheorem{proposition}[theorem]{\indent\sc Proposition}
\theoremstyle{definition}
\newtheorem{definition}[theorem]{\indent\sc Definition}
\newtheorem{remark}[theorem]{\indent\sc Remark}
\newtheorem{example}[theorem]{\indent\sc Example}
\newtheorem{assumption}[theorem]{\indent\sc Assumption}
\begin{document}
\title[Homeomorphism of the Revuz correspondence]{Homeomorphism of the Revuz correspondence for finite energy integrals}
\author[T. Ooi]{Takumu Ooi}\thanks{Department of Mathematics, Faculty of Science and Technology, Tokyo University of Science, 2641 Yamazaki, Noda-shi, Chiba prefecture, 278-8510, Japan.\ E-mail:ooitaku@rs.tus.ac.jp}
\begin{abstract}
We provide necessary and sufficient conditions for the convergence of Revuz measures of finite energy integrals. More precisely, the Revuz map from the set of all smooth measures of finite energy integrals, equipped with the topology induced by the norm given by the sum of the Dirichlet form and the \(L^2(m)\)-norm, to the space of positive continuous additive functionals, equipped with the topology induced by the \(L^2(\mathbb{P}_{m+\kappa+\nu_0})\)-norm with the local uniform topology, is a homeomorphism, where \(m\) is the underlying measure, \(\kappa\) is the killing measure of a Dirichlet form and \(\nu_0\) is an energy functional for the part that the process continuously escaping to the cemetery point.
\end{abstract}

\maketitle    

\section{Introduction}\label{intro}
In the analysis of Markov processes, the properties of positive continuous additive functionals (PCAFs in abbreviation), which are non-negative one-dimensional continuous processes, have been extensively studied. In particular, PCAFs are used to construct and analyze new Markov processes that are time-changed by the original processes. For example, the Cauchy process (1-stable process) on the circle is realized as a time-changed process of the reflected Brownian motion on the disk by the local times on the boundary (\cite[pp.194-196]{CF}). 
Another example is provided by the Feynman–Kac formula, which can be interpreted as the semigroup of a process killed at an exponentially distributed time, where the killing rate is given by a PCAF (\cite[\S A.3.2]{CF}). In Dirichlet form theory, PCAFs correspond one-to-one to smooth measures, Borel measures charging no set of zero capacity for a process. A smooth measure is also called the Revuz measure. This one-to-one correspondence between PCAFs and smooth measures is called the Revuz correspondence, and the bijective map from the space of Revuz measures to the space of PCAFs is called the Revuz map. The origins of this correspondence can be traced back to classical potential theory: Mayer \cite{M}, McKean and Tanaka \cite{MT}, Volkonski\u{\i} \cite{V}, Wentzell \cite{We}, and Blumenthal and Getoor \cite{BG2} studied this correspondence in the setting of Brownian motion and other processes. Later, Azéma, Kaplan-Duflo and Revuz \cite{AKR} developed the correspondence for recurrent Hunt processes, and Revuz \cite{R} formulated in the context of Dirichlet form theory. Furthermore, the correspondence between measures and (possibly discontinuous) additive functionals has been studied in greater generality for (non-symmetric) Markov processes by Azéma \cite{A}, Dynkin \cite{D}, Sharpe \cite{S}, Beznea and Boboc \cite{BB1, BB2, BB3, BBb}, Fitzsimmons and Getoor \cite{FG2, FG}, and others.

This raises the question of whether these two objects correspond at a topological level, specifically, whether the Revuz map can be shown to be a homeomorphism when suitable topologies are imposed on both spaces. The analysis of the convergence of PCAFs plays a crucial role in the construction and convergence of time-changed processes. For instance, verifying the convergence of PCAFs is an essential step in constructing Liouville Brownian motion (\cite{AK, GRV}) and in proving convergence to Liouville Brownian motion (\cite{O}, for example). In these cases, the limiting smooth measures are singular with respect to the underlying measures, and so the corresponding PCAFs cannot be expressed in explicit form. This makes it difficult to construct and prove convergence of these time-changed processes by considering convergence of PCAFs. On the other hand, in the framework of resistance forms, Croydon, Hambly, and Kumagai \cite{CHK}, Croydon \cite{Cr}, and Noda \cite{N1} developed a general theory of the convergence of time-changed processes by considering convergence in distribution of local times, which are PCAFs associated with Dirac measures.

In considering the continuity of the Revuz map, it is necessary to introduce an appropriate topology on the space of smooth measures. In classical potential theory, such as the Newton potentials (associated with Brownian motion) and the Riesz potentials (associated with \(\alpha\)-stable processes), topologies on classes of smooth measures have been constructed and studied (see \cite{L}). In the context of Dirichlet form theory, Nishimori, Tomisaki, Tsuchida, and Uemura \cite{NTTU} generalized these classical constructions and introduced a topology on a class of smooth measures of finite energy integrals. This topology is induced by the sum of the norm of a Dirichlet form and the \(L^2\)-norm with respect to the underlying measure of the Dirichlet form.
They also proved this topological space is a complete separable metric space, and showed the compactness of the Revuz map from the set of all smooth measures of finite energy integrals with this topology, to the space of PCAFs equipped with almost sure convergence for any quasi-every starting point, with the local uniform topology. As another approach, Noda \cite{N} considered the continuity of the Revuz map on a class of smooth measures that converge uniformly under the assumption called the absolutely continuous condition. In a related topic, we also mention that BenAmour and Kuwae \cite{BK} established the convergence of a sequence of semigroups and resolvents of time-changed processes by PCAFs belonging to the Kato class, under the assumption of a certain type of convergence of Green's functions.

 In this paper, we fix a regular Dirichlet form, and we consider the bicontinuity of the Revuz map restricted to the space \(\mathcal{S}_0\) of all smooth measures of finite energy integrals. As a first main result, we prove the topology \(\rho\) on \(\mathcal{S}_0\) introduced by \cite{NTTU} corresponds to the topology induced by \(L^2(\mathbb{P}_{m+\kappa+\nu_0})\) with the local uniform topology, and under these topologies, the Revuz map is bicontinuous, and hence a homeomorphism.
\begin{theorem}\label{main2}
For \(\mu _n, \mu \in \mathcal{S}_0\), let \(A^n, A\) be the corresponding PCAFs respectively. Revuz measures \(\mu_n\) converge to \(\mu\) in \(\rho\) if and only if \(A^n\) converge to \(A\) in \(L^2(\mathbb{P}_{m+\kappa+\nu_0})\) with the local uniform topology, that is, for any \(T>0,\)
\[\lim_{n\to \infty}\mathbb{E}_{m+\kappa+\nu_0}\left[\sup_{0\leq t\leq T}|A^n_t-A_t|^2\right]=0.\]
\end{theorem}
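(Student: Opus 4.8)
The plan is to realize the $\rho$-topology analytically through the $1$-potential and then transfer it to the probabilistic side by a Fukushima-type decomposition. Writing $\nu_n := \mu_n - \mu$ (a signed measure of finite energy integral) and $u_n := U_1\nu_n \in \mathcal{F}$ for its $1$-potential, the norm inducing $\rho$ is
\[
\rho(\mu_n,\mu)^2 = \cE(u_n,u_n) + \|u_n\|_{L^2(m)}^2 = \cE_1(u_n,u_n),
\]
so the assertion reduces to the equivalence $\cE_1(u_n,u_n)\to 0 \iff \E_{m+\kappa+\nu_0}\bigl[\sup_{0\le t\le T}|B^n_t|^2\bigr]\to 0$ for every $T>0$, where $B^n := A^n - A$ is the signed continuous additive functional with Revuz measure $\nu_n$. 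First I would record the bridge between the two sides: the Fukushima decomposition of $u_n(X_t)$ together with the characterization of $U_1\nu_n$ gives the representation
\[
B^n_t = u_n(X_0) - u_n(X_t) + \int_0^t u_n(X_s)\,ds + M^{[u_n]}_t ,
\]
with $M^{[u_n]}$ the martingale additive functional part. Every term on the right is governed by $u_n$, and the purpose of the weight $m+\kappa+\nu_0$ is precisely to reassemble $\cE_1(u_n,u_n)$ out of these pieces.

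For the forward implication ($\rho\Rightarrow$ PCAF convergence) I would estimate $\sup_{t\le T}|B^n_t|^2$ termwise. The drift $\int_0^t u_n(X_s)\,ds$ is handled by Cauchy--Schwarz and the $L^2(m)$-contractivity of $p_s$, contributing $\|u_n\|_{L^2(m)}^2$; the boundary terms $u_n(X_0), u_n(X_t)$ contribute $\int u_n^2\,d(m+\kappa+\nu_0)$, where $\kappa$ absorbs the value carried off by a jump to the cemetery and $\nu_0$ the value carried off by a continuous approach to it; and the martingale term is controlled by Doob's inequality $\E\bigl[\sup_{t\le T}|M^{[u_n]}_t|^2\bigr]\le 4\,\E\bigl[\langle M^{[u_n]}\rangle_T\bigr]$, whose right side is a Revuz integral of the energy measure $\mu_{\langle u_n\rangle}$. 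Collecting the Beurling--Deny pieces $\mu^c_{\langle u_n\rangle}$, $\mu^j_{\langle u_n\rangle}(dx)=\bigl(u_n(x)-u_n(y)\bigr)^2 J$, and $\mu^\kappa_{\langle u_n\rangle}=u_n^2\kappa$ and combining them with the boundary contributions reconstitutes a constant multiple of $\cE(u_n,u_n)$, yielding
\[
\E_{m+\kappa+\nu_0}\Bigl[\sup_{t\le T}|B^n_t|^2\Bigr]\le C(T)\,\cE_1(u_n,u_n),
\]
which is the continuity of the Revuz map and sharpens the compactness discussed in the introduction.

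For the reverse implication I would prove the matching lower bound. The same identity lets me recover the three ingredients of $\cE_1(u_n,u_n)$ separately: reading off $\|u_n\|_{L^2(m)}^2$ from the drift; extracting the quadratic variation of $M^{[u_n]}$ from $B^n$ through the additivity identity $\E_x[(B^n_t)^2]=2\,\E_x\bigl[\int_0^t \E_{X_s}[B^n_{t-s}]\,dB^n_s\bigr]$; and evaluating the decomposition near the lifetime $\zeta$, where the term $u_n(X_{\zeta-})$ splits according to whether the process jumps to the cemetery (feeding $\int u_n^2\,d\kappa$) or approaches it continuously (feeding $\int u_n^2\,d\nu_0$). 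Each of these is dominated by $\E_{m+\kappa+\nu_0}\bigl[\sup_t|B^n_t|^2\bigr]$, for $T$ large or after a Laplace transform in $t$ to decouple the lifetime, giving $\cE_1(u_n,u_n)\le C'\,\E_{m+\kappa+\nu_0}\bigl[\sup_t|B^n_t|^2\bigr]$ and hence $\rho$-convergence.

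The hardest part is this reverse lower bound. In the forward direction the triangle inequality permits each term to be over-estimated in isolation, but for the lower bound the drift, boundary, and martingale contributions to $\sup_{t\le T}|B^n_t|$ can partially cancel, so one cannot simply read $\cE_1(u_n,u_n)$ off the supremum. The crux is to disentangle them -- separating the oscillating martingale part from the absolutely continuous drift and from the behaviour at $\zeta$ -- and to confirm that the weight $m+\kappa+\nu_0$ is exactly heavy enough, through $\kappa$ on the jump-to-cemetery set and $\nu_0$ on the continuous-escape set, to recover $\int u_n^2\,d\kappa$ and $\int u_n^2\,d\nu_0$ and not merely the interior energy $\mu^c_{\langle u_n\rangle}$. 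Extracting these boundary energies from the supremum is where the specific choice of reference measure does the essential work.
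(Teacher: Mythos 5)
Your forward direction follows the same route as the paper (Fukushima decomposition of \(u_n=U_1(\mu_n-\mu)\) via Lemma \ref{FukuDecomp1}, Doob's inequality, termwise estimates), but it silently assumes the two points where the real technical work sits. First, \(\kappa\) and \(\nu_0\) are not excessive for \(\{P_t\}_t\), so neither the contraction \(\int P_t(u_n^2)\,d\kappa\le\int u_n^2\,d\kappa\) nor your claim that the boundary terms ``contribute \(\int u_n^2\,d(m+\kappa+\nu_0)\)'' is available; the paper needs Lemma \ref{symSmooth}, which bounds \(\int P_t\,\mathbb{E}_{\cdot}[(\widetilde{A^n_{\infty}}-\widetilde{A_{\infty}})^2]\,d\kappa\) and its \(\nu_0\)-analogue by \(\mathcal{E}_1\)-quantities, precisely as a substitute for the missing symmetry and contraction under these weights. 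Second, the clean maximal inequality you assert, \(\mathbb{E}_{m+\kappa+\nu_0}\bigl[\sup_{t\le T}|B^n_t|^2\bigr]\le C(T)\,\mathcal{E}_1(u_n,u_n)\), is stronger than what the paper establishes and your sketch gives no control of \(\sup_{t\le T}|u_n(X_t)|\) under \(\kappa+\nu_0\); the paper first obtains \(\sup_{t\le T}\mathbb{E}_{m+\kappa+\nu_0}[|B^n_t|^2]\to 0\) (Lemma \ref{EnergykappaLem}) and then upgrades to \(\mathbb{E}[\sup]\) by a dyadic partition of \([0,T]\), monotonicity of PCAFs, and the vanishing modulus term \(\mathcal{E}_1\bigl(U_1\mu,(1-e^{-T/N}P_{T/N})U_1\mu\bigr)\), concluding only \(\varlimsup_{n}\mathbb{E}[\sup_{t\le T}|B^n_t|^2]\le C\varepsilon\).

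The genuine gap is the reverse implication, which you correctly flag as the crux but leave as a plan. Recovering \(\mathcal{E}_1(u_n)\) from \(\sup_{t\le T}|B^n_t|\) by ``disentangling'' drift, martingale, and lifetime contributions runs into exactly the cancellation problem you name, and nothing in your sketch resolves it: the identity \(\mathbb{E}_x[(B^n_t)^2]=2\,\mathbb{E}_x[\int_0^t\mathbb{E}_{X_s}[B^n_{t-s}]\,dB^n_s]\) does not isolate \(\langle M^{[u_n]}\rangle\), and ``evaluating the decomposition near \(\zeta\)'' is a heuristic, not an argument. The paper avoids lower-bound disentangling altogether through the exact isometry of Proposition \ref{main2-2}: for the discounted functional \(\widetilde{B^n_t}=\int_0^t e^{-\alpha s}\,dB^n_s\) one has \(\mathbb{E}_{\alpha m+\frac{\kappa}{2}+\frac{\nu_0}{2}}[(\widetilde{B^n_{\infty}})^2]=\mathcal{E}_{\alpha}(U_{\alpha}\mu_n-U_{\alpha}\mu)\), proved potential-theoretically from \(\mathbb{E}_x[(\widetilde{B^n_{\infty}})^2]=2U^{2\alpha}_{B^n}U^{\alpha}_{B^n}1\) and the partition of unity \(2\alpha R_{2\alpha}1+\bigl(1-2\alpha R_{2\alpha}1-\varphi_{2\alpha}\bigr)+\varphi_{2\alpha}=1\) coming from (\ref{eq:Ukappa}); it is this telescoping identity, not the Beurling--Deny bookkeeping of the energy measure of \(M^{[u_n]}\), that shows \(m+\frac{\kappa}{2}+\frac{\nu_0}{2}\) is exactly the right weight. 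The reverse direction then reduces to the bootstrap of Lemma \ref{EnergykappaLem}, using \(\widetilde{B^n_{\infty}}=\widetilde{B^n_T}+e^{-T}\widetilde{B^n_{\infty}}\circ\theta_T\) with \(T\) large and Lemma \ref{symSmooth} again in place of \(m\)-symmetry. Without Proposition \ref{main2-2} or a worked-out substitute yielding the lower bound, your proof of the ``PCAF convergence implies \(\rho\)-convergence'' half is incomplete.
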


This theorem concerning the homeomorphism of the Revuz map is proved in Section \ref{mainsec} by capturing the following energy of PCAFs.
\begin{proposition}\label{main2-2}
For \(\mu, \nu \in \mathcal{S}_0\), let \(A, B\) be the corresponding PCAFs respectively. It holds that 
\begin{equation}
\mathbb{E}_{\alpha m+\frac{\kappa}{2}+\frac{\nu_0}{2}}\left[\widetilde{A_{\infty}}\widetilde{B_{\infty}}\right] =\mathcal{E}_{\alpha}(U_{\alpha}\mu, U_{\alpha}\nu),\label{eq:main2-A}
\end{equation}
where we set \(\widetilde{A_t}:=\int_0^t e^{-\alpha s}dA_s\) and \(\widetilde{B_t}:=\int_0^t e^{-\alpha s}dB_s\), and \(U_{\alpha}\mu\) and \(U_{\alpha}\nu\) are \(\alpha\)-potentials of \(\mu\) and \(\nu\) respectively, for \(\alpha>0\). 
\end{proposition}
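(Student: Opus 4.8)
The plan is to represent $\widetilde{A_\infty}$ and $\widetilde{B_\infty}$ through the Fukushima decomposition of the $\alpha$-potentials $u:=U_\alpha\mu$ and $w:=U_\alpha\nu$, and then to extract the right-hand side from the quadratic covariation of the associated martingale additive functionals. Writing $\widetilde u$ for the quasi-continuous version of $u$, I would use the Fukushima decomposition $\widetilde u(X_t)-\widetilde u(X_0)=M^{[u]}_t+N^{[u]}_t$ together with the explicit form $N^{[u]}_t=\alpha\int_0^t\widetilde u(X_s)\,ds-A_t$ valid for a potential $u=U_\alpha\mu$. Applying the product rule to $e^{-\alpha t}\widetilde u(X_t)$ then gives
\[
e^{-\alpha t}\widetilde u(X_t)-\widetilde u(X_0)=\int_0^t e^{-\alpha s}\,dM^{[u]}_s-\widetilde{A_t}.
\]
Letting $t\to\infty$ (the boundary value at the lifetime $\zeta$ being carried by the killing/escape jump of $M^{[u]}$), I obtain $\widetilde{A_\infty}=\widetilde u(X_0)+\widetilde{M_\infty}$ with $\widetilde{M_t}:=\int_0^t e^{-\alpha s}\,dM^{[u]}_s$, and symmetrically $\widetilde{B_\infty}=\widetilde w(X_0)+\widetilde{N_\infty}$ with $\widetilde{N_t}:=\int_0^t e^{-\alpha s}\,dM^{[w]}_s$.

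Multiplying these representations and taking $\mathbb{E}_x$, the two cross terms vanish because $\widetilde u(X_0),\widetilde w(X_0)$ are $\mathcal{F}_0$-measurable while $\widetilde{M},\widetilde{N}$ are $L^2$-martingales null at $0$; here $\mu,\nu\in\mathcal{S}_0$ is exactly what guarantees $u,w\in\mathcal{F}$ and the square-integrability needed (smoothness of $\kappa,\nu_0$ ensures the quasi-everywhere statements hold $\mathbb{P}_{\alpha m+\kappa/2+\nu_0/2}$-a.s.). With $\langle\widetilde M,\widetilde N\rangle_\infty=\int_0^\infty e^{-2\alpha s}\,d\langle M^{[u]},M^{[w]}\rangle_s$ this yields
\[
\mathbb{E}_x\!\left[\widetilde{A_\infty}\widetilde{B_\infty}\right]=\widetilde u(x)\widetilde w(x)+\mathbb{E}_x\!\left[\int_0^\infty e^{-2\alpha s}\,d\langle M^{[u]},M^{[w]}\rangle_s\right].
\]
Integrating against $c:=\alpha m+\tfrac12\kappa+\tfrac12\nu_0$ reduces the proposition to the identity $\int\!\big(\widetilde u\widetilde w+U_{2\alpha}\mu_{\langle u,w\rangle}\big)\,dc=\mathcal{E}_\alpha(u,w)$, where $\mu_{\langle u,w\rangle}$ is the Revuz measure of $\langle M^{[u]},M^{[w]}\rangle$.

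To prove this last identity I would split $c$ into its three pieces. The $\alpha m$-part produces $\alpha(u,w)_m$ together with $\alpha\int U_{2\alpha}\mu_{\langle u,w\rangle}\,dm$; by the Revuz formula and $R_{2\alpha}1=\tfrac{1}{2\alpha}\big(1-\mathbb{E}_\cdot[e^{-2\alpha\zeta}]\big)$ the latter equals $\tfrac12\mu_{\langle u,w\rangle}(E)-\tfrac12\int \mathbb{E}_\cdot[e^{-2\alpha\zeta}]\,d\mu_{\langle u,w\rangle}$. The $\tfrac12(\kappa+\nu_0)$-part produces $\tfrac12\int\widetilde u\widetilde w\,d(\kappa+\nu_0)$ and, using symmetry of $U_{2\alpha}$, the term $\tfrac12\int U_{2\alpha}(\kappa+\nu_0)\,d\mu_{\langle u,w\rangle}$. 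Two facts then close the argument: the potential identity $U_{2\alpha}(\kappa+\nu_0)=\mathbb{E}_\cdot[e^{-2\alpha\zeta}]$, which cancels the boundary term left over from the $\alpha m$-part, and the energy identity $\mathcal{E}(u,w)=\tfrac12\mu_{\langle u,w\rangle}(E)+\tfrac12\int\widetilde u\widetilde w\,d(\kappa+\nu_0)$. Combining them collapses the total to $\alpha(u,w)_m+\mathcal{E}(u,w)=\mathcal{E}_\alpha(u,w)$, as required. The precise constants $\alpha,\tfrac12,\tfrac12$ in $c$ are exactly those forced by this matching, as one can verify in the solvable cases of Brownian motion with a constant killing rate (where $\nu_0=0$) and of a continuously exploding diffusion (where $\nu_0\neq0$).

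The hard part will be the boundary analysis at the lifetime $\zeta$, which is where $\nu_0$ enters. Concretely, I expect the main obstacles to be: describing the behavior of $M^{[u]}$ at $\zeta$ so as to separate killing by a jump to $\partial$ (governed by $\kappa$) from continuous escape to the cemetery (governed by $\nu_0$), and thereby decomposing $\mu_{\langle u,w\rangle}$ into its local, jumping, killing, and escape parts; and establishing the two identities above, in particular $U_{2\alpha}(\kappa+\nu_0)=\mathbb{E}_\cdot[e^{-2\alpha\zeta}]$, which encodes that the process reaches $\partial$ only through $\kappa$ and $\nu_0$. The remaining work is routine but must be done carefully: justifying the $L^2$-martingale property and the vanishing of the cross terms, and the Revuz-formula and Fubini interchanges, all of which rest on $\mu,\nu\in\mathcal{S}_0$ so that $u,w\in\mathcal{F}$ and $\widetilde{A_\infty},\widetilde{B_\infty}\in L^2(\mathbb{P}_{\alpha m+\kappa/2+\nu_0/2})$.
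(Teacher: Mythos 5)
Your proposal is correct in substance, but it takes a genuinely different route from the paper. The paper first polarizes to reduce to \(A=B\), then computes \(\mathbb{E}_x[(\widetilde{A_{\infty}})^2]=2U_A^{2\alpha}U_A^{\alpha}1(x)\) directly from [CF, Exercise 4.1.7] and pairs this \(2\alpha\)-excessive function against \(\alpha m\), \(\tfrac12\kappa\) and \(\tfrac12\nu_0\): the \(m\)-pairing gives \(2\alpha\int R_{2\alpha}1\,U_{\alpha}\mu\,d\mu\), the \(\kappa\)-pairing gives \(\int(1-2\alpha R_{2\alpha}1-\varphi_{2\alpha})U_{\alpha}\mu\,d\mu\) via the L\'{e}vy-system identity (\ref{eq:Ukappa}), and the \(\nu_0\)-pairing gives \(\int\varphi_{2\alpha}U_{\alpha}\mu\,d\mu\) via [CF, Theorem 5.4.3(iv)]; the three terms telescope to \(\int U_{\alpha}\mu\,d\mu=\mathcal{E}_{\alpha}(U_{\alpha}\mu)\), with no Fukushima decomposition and no energy measures appearing. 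You instead represent \(\widetilde{A_{\infty}}=\widetilde{u}(X_0)+\widetilde{M_{\infty}}\) via Lemma \ref{FukuDecomp1} and reduce everything to the bracket Revuz measure \(\mu_{\langle u,w\rangle}\) plus your identities (a) and (b). Both proofs ultimately rest on the same mass balance \(2\alpha R_{2\alpha}1+U_{2\alpha}\kappa+\varphi_{2\alpha}=1\): your identity \(U_{2\alpha}(\kappa+\nu_0)=\mathbb{E}_{\cdot}[e^{-2\alpha\zeta}]\) is exactly (\ref{eq:Ukappa}) combined with the duality \(\int U_{2\alpha}\nu\,d\nu_0=\int\varphi_{2\alpha}\,d\nu\) of [CF, Theorem 5.4.3(iv)], so the boundary analysis you flag as the hard part is available off the shelf; and your identity (b) requires no new decomposition of \(\mu_{\langle u,w\rangle}\) into killing and escape parts, since \(\int\widetilde{u}\widetilde{w}\,d\nu_0=0\) for \(u,w\in\mathcal{F}\) (Remark \ref{nu0rem}(i)), whence (b) is just the standard identity \(\mathcal{E}(u,w)=\tfrac12\mu_{\langle u,w\rangle}(E)+\tfrac12\int\widetilde{u}\widetilde{w}\,d\kappa\) from [FOT, Theorem 5.2.3]. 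What you must still do carefully, and what the paper's route avoids, is localization: for general \(\mu\in\mathcal{S}_0\) the potential \(U_{\alpha}\mu\) is unbounded, so \(\widetilde{M}\) must be shown to be a uniformly integrable \(L^2\)-martingale through nest truncations \({\bf 1}_{F_k}\mu\) with bounded potentials (the device of Lemma \ref{nu_0Lemma}), and the pairings \(\int U_{2\alpha}\mu_{\langle u,w\rangle}\,d\kappa\) and \(\int U_{2\alpha}\mu_{\langle u,w\rangle}\,d\nu_0\) likewise need nest approximation because \(\mu_{\langle u,w\rangle}\) need not lie in \(\mathcal{S}_0\). In exchange, your approach buys a conceptually transparent picture: the constants \(\alpha,\tfrac12,\tfrac12\) emerge from martingale orthogonality plus mass balance at the lifetime, whereas the paper's second-moment formula achieves the same cancellation in a few lines and its polarization step removes the cross-term bookkeeping entirely.
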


Here, \(m\) is the underlying measure of a regular Dirichlet form and \(\kappa\) is the killing measure appearing in the Beurling-Deny decomposition of a regular Dirichlet form. See Section \ref{prelim} for details. Moreover, \(\nu_0\) represents the energy arising from the process continuously hitting the cemetery point. More precisely, for a non-negative Borel measurable function \(f\), we define
 \[\int f d\nu_0 := \varlimsup_{t\searrow 0}\frac{1}{t}\int f(x) \mathbb{E}_x[e^{-2\zeta} {\bf 1}_{\{\partial \}}(X_{\zeta -}){\bf 1}_{\{\zeta \leq t\}}]dm(x).\] We remark that \(\nu_0\) is not a measure but a functional.  Nevertheless, to unify our treatment with \(m\) and \(\kappa\), and since \(\nu_0\) can be seen as the limit of measures \(\nu_t\) called entrance laws as \(t\searrow 0\), we adopt the notation \(\int fd\nu_0\). See Lemma \ref{nu_0Lemma} and Remark \ref{nu0rem} for details.

In Dirichlet form theory, the Revuz correspondence is characterized solely by the underlying measure \(m\). However, as Theorem \ref{main2} and Proposition \ref{main2-2} demonstrate, \(\kappa\) and \(\nu_0\) are essential to establishing the homeomorphism. See also Example \ref{ex4-2}, \ref{ex4-3}.

We also prove in Theorem \ref{submain1} the continuity of the Revuz map in the sense of \(L^{2-}(\mathbb{P}_x)\) with the local uniform topology for quasi-every \(x\). As another result of the continuity of the Revuz map, under the assumption to prevent immediate killing inside of stochastic processes (Assumption \ref{ass1}), we prove in Theorem \ref{mainvague} that if PCAFs converge almost surely and they are uniformly bounded in a certain sense, then corresponding smooth measures converge vaguely.

The organization of this paper is as follows. In Section \ref{prelim}, we present the preliminaries, including basic properties of Dirichlet forms, particularly PCAFs, Revuz measures, and their relations. In Section \ref{contimainsec}, we recall the topology introduced in \cite{NTTU} and present original results on the convergence of PCAFs. In Section \ref{mainsec}, we prove Theorem \ref{main2} and Proposition \ref{main2-2} by using the Fukushima decomposition of the potentials, the Beurling-Deny decomposition, and by capturing the energy of PCAFs. In Section \ref{mainsec2}, we prove vague convergence of Revuz measures under Assumption \ref{ass1}. In Section \ref{secexample}, we provide examples, including those showing that both \(\kappa\) and \(\nu_0\) are generally necessary for Theorem \ref{main2}.

\section{Preliminaries}\label{prelim}
This section is devoted to the preliminaries of Dirichlet form theory, in particular, PCAFs and Revuz measures. For more details on Dirichlet form theory, see \cite{CF,FOT, Os}.

Let \(E\) be a locally compact separable metric space and \(m\) be a positive Radon measure with \(\text{ supp}(m)=E\). The inner product in \(L^2(E;m)\) is denoted by \(\langle \cdot, \cdot \rangle_{m}\) and \(L^2\)-norm is denoted by \(||\cdot ||_{L^2(m)}\). For a non-negative definite symmetric bilinear form \(\mathcal{E}\) on a dense linear subspace \(\mathcal{F}\) of \(L^2(E;m)\), \((\mathcal{E}, \mathcal{F})\) is called a \textit{Dirichlet form} on \(L^2(E;m)\) if it is closed, that is, \(\mathcal{E}_1\)-complete where we define \(\mathcal{E}_{\alpha}(f,g):=\mathcal{E}(f,g)+\alpha \langle f,g\rangle_m\), and Markovian, that is, for any \(f\in \mathcal{F}\), it holds that \(g:=(0\vee f)\wedge 1 \in \mathcal{F}\) and \(\mathcal{E}(g)\leq \mathcal{E}(f)\). Here we defined \(a\wedge b:=\min\{a,b\}\) and \(a\vee b:=\max\{a,b\}\) for \(a,b\in \mathbb{R}\), and \(\mathcal{E}(f):=\mathcal{E}(f,f)\) for simplicity.

We set \(C_c(E)\) the family of all continuous functions with compact support and \(C_c(E)\) is equipped with a sup norm \(||\cdot||_{\infty}\). A Dirichlet form \((\mathcal{E},\mathcal{F})\) is called \textit{regular} if \(\mathcal{F} \cap C_c(E)\) is \(\mathcal{E}_1\)-dense in \(\mathcal{F}\) and \(||\cdot||_{\infty}\)-dense in \(C_c(E)\). For a regular Dirichlet form \((\mathcal{E},\mathcal{F})\) on \(L^2(E;m)\), there exists an \(m\)-symmetric Hunt process \(X=(\{X_t\}_{t\geq 0}, \Omega, \{\mathcal{F}_t\}_t, \{\mathbb{P}_x\}_{x\in E}, \zeta, \partial, \{\theta_t\}_{t\geq 0})\) on \(E\) associated with \((\mathcal{E},\mathcal{F})\). More precisely, \(\{X_t\}_t\) is an \(E\)-valued strong Markov process with quasi-left continuity on a probability space \(\Omega\) with a filtration \(\{\mathcal{F}_t\}_t\), probability measures \(\mathbb{P}_x\) with starting points \(x\in E\), a life time \(\zeta\), the cemetery point \(\partial\) and a shift operator \(\{\theta_t\}_t\), that is, \(\theta_t :\Omega \to \Omega\) satisfying \(X_s(\theta_t \omega)=X_{s+t}(\omega)\) for any \(\omega \in \Omega\) and \(s,t \geq 0\). The transition semigroup is defined by \(P_t f:=\mathbb{E}_x[f(X_t)]\) for a Borel function \(f\) and \(t\geq 0\), then \(P_t\) is \(m\)-symmetric, that is, \(\langle P_tf, g\rangle _m =\langle f, P_tg\rangle _m\) for \(f, g\in L^2(E;m)\), and \((\mathcal{E}, \mathcal{F})\) is associated with \(X\) in the following sense.
\begin{equation*}
\left\{ \begin{split}\mathcal{E}(f,g)&=\lim_{t\searrow 0}\frac{1}{t}\langle f-P_tf, g\rangle_m \text{\ \ for\ } f, g\in \mathcal{F},&\\
\mathcal{F}&=\{f\in L^2(E;m)\ |\ \lim_{t\searrow 0}\frac{1}{t}\langle f-P_tf, f\rangle_m<\infty\}.&
\end{split}\right.
\end{equation*}
 We define the resolvents \(\{R_{\alpha}\}_{\alpha>0}\) by
\[R_{\alpha}f(x):=\int_0^{\infty}e^{-\alpha t}P_tf(x)dt\]
for \(f\in L^2(m)\) and \(\alpha>0.\) Moreover, we define an \textit{extended Dirichlet space} \(\mathcal{F}_e\) by the totality of \(m\)-equivalence classes of all \(m\)-measurable functions \(f\) on \(E\) such that \(|f|<\infty, m\)-almost everywhere and there exists an \(\mathcal{E}\)-Cauchy sequence
\(\{f_n\}_{n\geq 1}\subset \mathcal{F}\) such that \(\lim_{n\to \infty}f_n=f\) \(m\)-almost everywhere on \(E\).

Throughout this paper, we assume that \((\mathcal{E}, \mathcal{F})\) is a regular Dirichlet form on \(L^2(E;m)\) and \(X\) is a Hunt process associated with \((\mathcal{E, \mathcal{F}})\).

We present the following definitions to interpret the capacity of a process \(X\).
\begin{definition}[{\cite[Definition 1.2.7, 1.2.12, \S 2.3, \S 5.4, A.1.28, A.2.12]{CF}}]
\ \vspace{-3mm}\\
\begin{enumerate}[nosep]
\item An increasing sequence of closed sets \(\{F_k\}_{k\geq 1}\) of \(E\) is a \textit{nest} if \(\cup_{k\geq 1} \{f\in \mathcal{F} : f=0 \ m\text{-a.e.\  on\  }E \setminus F_k\}\) is \(\mathcal{E}_1\)-dense in \(\mathcal{F}\).
\item  \(N\subset E\) is \textit{\(\mathcal{E}\)-polar} if there exists a nest \(\{F_k\}_{k\geq 1}\) such that \(N\subset \cap_{k\geq 1} (E\setminus F_k)\).
\item A statement depending on \(x\in D \subset E\) holds \(\mathcal{E}\)-\textit{quasi-everywhere} (q.e. in abbreviation) on \(D\) if there exists an \(\mathcal{E}\)-polar set \(N\subset D\) such that the statement holds for \(x\in D\setminus N\).
\item A function \(f\) is \(\mathcal{E}\)-\textit{quasi-continuous} if there exists a nest \(\{F_k\}_{k\geq 1}\) such that the restriction of \(f\) to \(F_k\) is finite and continuous on \(F_k\) for each \(k\geq 1\).
\item  A subset \(B\subset E\) is a \textit{nearly Borel set} if, for any probability measure \(\mu\) on \(E\cup \{\partial\}\), there exist Borel sets \(B_1, B_2\) such that \(B_1\subset B \subset B_2\) and \(\mathbb{P}_{\mu}(X_t \in B_2\setminus B_1 \text{\ for\ some\ }t\geq 0)=0\).
\item A subset \(N\subset E\) is \(m\)\textit{-inessential} if \(N\) is an \(m\)-negligible nearly Borel set such that \(\mathbb{P}_x(\sigma_{N}<\infty)=0\) for \(x\in E\setminus N,\) where \(\sigma_{N}:=\inf\{t>0;X_t\in N\}\) is the first hitting time to \(N\).

\item For an open set \(A\subset E,\) we define \(\text{Cap}_1(A):=\inf\{\mathcal{E}_1(f):f\in \mathcal{F}, f\geq 1\ m{\text -a.e.\  on}\ A\}\) where we define the infimum of the empty set to be \(\infty.\) For any set \(B\subset E,\) we define \(\text{Cap}_1(B):=\inf\{\text{Cap}_1(A): A \text{\ is\ an\ open\ set\ satisfying\ }B\subset A\}\). We call \(\text{Cap}_1(B)\) the \textit{capacity} of \(B\).
\item A subset \(N\subset E\) is \textit{Cap\(_1\)-polar} if Cap\(_1(N)=0.\)
\item A universally measurable non-negative function \(f\) defined q.e. on \(E\) is called an \textit{excessive function} for \(\{P_t\}_t\) if \(P_tf \nearrow f\) q.e. as \(t\searrow 0\). An excessive function \(f\) is called \textit{purely excessive} if \(\lim_{t\to \infty}P_tf(x)=0\) q.e. \(x\in E.\) An excessive function for \(\{e^{-\alpha t}P_t\}_t\) is called \textit{\(\alpha\)-excessive} for \(\alpha \geq 0\).

\end{enumerate}
\end{definition}
We remark that, if \((\mathcal{E}, \mathcal{F})\) is regular, a subset of \(E\) is \(\mathcal{E}\)-polar if and only if Cap\(_1\)-polar, an increasing sequence \(\{F_k\}_{k\geq 1}\) of closed sets is an \(\mathcal{E}\)-nest if and only if \( \lim_{k\to \infty} \rm{Cap}_1(K\setminus F_k)=0\) for any compact set \(K\subset E\), and all functions belonging to \(\mathcal{F}\) have quasi-continuous versions. For \(f\in \mathcal{F}\), denote by \(\widetilde{f}\) a quasi-continuous version of \(f.\)  Since we deal with a fixed regular Dirichlet form \((\mathcal{E}, \mathcal{F})\), for convenience, we drop \(``\mathcal{E}\)-" from the terminology such as \(\mathcal{E}\)-nest, \(\mathcal{E}\)-polar, \(\mathcal{E}\)-quasi-everywhere and \(\mathcal{E}\)-quasi-continuous. See \cite[Theorem 1.3.14, Lemma 1.3.15, \S 2.3]{CF} for details.

Next, we describe PCAFs, smooth measures and their correspondence.
\begin{definition}[{\cite[Definition A.3.1]{CF}}]\label{DefPCAF}
A \([-\infty,\infty]\)-valued stochastic process \(A=\{A_t\}_{t\geq 0}\) is called an \textit{additive functional} of \(X\) if there exist \(\Lambda \in \mathcal{F}_{\infty}\) and an \(m\)-inessential set \(N\subset E\) such that \(\mathbb{P}_x(\Lambda)=1\) for \(x\in E\setminus N\) and \(\theta_t \Lambda \subset \Lambda\) for any \(t>0\), and the following conditions hold.\\
\((A.1)\) For each \(t\geq 0,\) \(A_t|_{\Lambda}\) is \(\mathcal{F}_t|_{\Lambda}\)-measurable.\\
\((A.2)\) For any \(\omega \in \Lambda\), \(A_{\cdot}(\omega)\) is right continuous on \([0,\infty)\) and has left limits on \((0, \zeta(\omega))\), \(A_0(\omega)=0,\) \(|A_{t}(\omega)|<\infty\) for \(t<\zeta(\omega)\) and \(A_t(\omega)=A_{\zeta(\omega)}(\omega)\) for \(t\geq \zeta (\omega)\). Moreover the additivity condition
\[A_{t+s}(\omega)=A_t(\omega)+A_s(\theta_t\omega)\ \ \text{for\ every\ }t,s\geq 0,\]
is satisfied.

An additive functional \(A\) is called a \textit{positive continuous additive functional} (PCAF in abbreviation) if \(A\) is a \([0,\infty]\)-valued continuous process, and denote by \({\bf A}_c^+\) the family of all PCAFs.
\end{definition}
The set \(\Lambda\) appearing in Definition \ref{DefPCAF} is called the defining set of \(A\). A PCAF \(A\) is called a PCAF in the strict sense if \(N\) appearing in Definition \ref{DefPCAF} is empty.

PCAFs \(A\) and \(B\) are called \textit{\(m\)-equivalent} if \(\int_E\mathbb{P}_x(A_t\neq B_t)dm(x)=0\) for any \(t>0\). This is a condition equivalent to the existence of a common defining set \(\Lambda\) and a common \(m\)-inessential set \(N\) such that \(A_t(\omega)=B_t(\omega)\) for any \(t \geq 0\) and \(\omega \in \Lambda.\) 

\begin{definition}[{\cite[Definition 2.3.13]{CF}}]
A positive Borel measure \(\mu\) on \(E\) is a \textit{smooth measure} if \(\mu\) charges no \(\mathcal{E}\)-polar set and, there exists a nest \(\{F_k\}_k\) such that \(\mu(F_k)<\infty\) for every \(k\geq 1.\) Denote  by \(\mathcal{S}\) the family of all smooth measures.
\end{definition}

For a positive measure \(\nu\) on \(E\), \(\mathbb{E}_{\nu}\) (resp. \(\mathbb{P}_{\nu}\)) denotes \(\int_E \mathbb{E}_x[\ \cdot\ ] d\nu(x)\) (resp. \(\int_E \mathbb{P}_x(\ \cdot\ )d\nu(x)\)).

PCAFs and smooth measures correspond one-to-one in the following sense, and this correspondence is called \textit{the Revuz correspondence}. Therefore, a smooth measure is also called a \textit{Revuz measure}. The map from \(\mathcal{S}\) to \({\bf A}_c^+\) defined by this correspondence is called \textit{the Revuz map}.
\begin{theorem}[{\cite[Theorem 4.1.1]{CF}}]
\((i)\) For a PCAF \(A\), there exists a unique smooth measure \(\mu\) such that
\begin{equation}
\int_E fd\mu = \lim_{t \to 0}\frac{1}{t} \mathbb{E}_m\left[ \int_0^t f(X_s) dA_s  \right] \label{eq:AppPCAF-1}
\end{equation}
for any non-negative Borel function \(f\) on \(E.\)\\
\((ii)\) For any smooth measure \(\mu\), there exists a PCAF \(A\) satisfying \((\ref{eq:AppPCAF-1})\) up to the \(m\)-equivalence.
\end{theorem}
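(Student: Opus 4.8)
I would prove the two directions in turn; the existence part of (ii) is built from the finite-energy case together with an exhaustion, while (i) supplies the Revuz formula against which the construction is verified. For the existence of the limit in (i), fix a PCAF \(A\) and a bounded nonnegative Borel function \(f\), and set \(S_t:=\mathbb{E}_m[\int_0^t f(X_s)\,dA_s]\). The additivity \(A_{t+s}=A_t+A_s\circ\theta_t\) gives \((\int_t^{t+s}f(X_r)\,dA_r)=(\int_0^s f(X_r)\,dA_r)\circ\theta_t\), so the Markov property yields
\[
S_{t+s}-S_t=\int_E P_t\Phi_s\,dm=\langle \Phi_s,P_t1\rangle_m,\qquad \Phi_s(x):=\mathbb{E}_x\Big[\int_0^s f(X_r)\,dA_r\Big].
\]
Since \(P_t1\le1\) by the possible killing (here only \(m\)-symmetry, not \(m\)-invariance, is available), this gives \(S_{t+s}\le S_t+S_s\); subadditivity of the nonnegative function \(t\mapsto S_t\) then forces \(\lim_{t\downarrow0}S_t/t=\sup_{t>0}S_t/t\in[0,\infty]\) to exist, so \(\langle f,\mu\rangle:=\lim_{t\downarrow0}S_t/t\) is well defined.

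It remains, for (i), to check that \(\mu\) is a smooth measure and is unique. Countable additivity and \(\sigma\)-finiteness along a nest follow from monotone convergence applied inside the defining formula, so \(\mu\) is a Borel measure finite on a nest. That \(\mu\) charges no set of zero capacity is the potential-theoretic core: I would pass to the Laplace transform, identify the \(\alpha\)-potential \(u_\alpha(x):=\mathbb{E}_x[\int_0^\infty e^{-\alpha s}\,dA_s]\) as an \(\alpha\)-excessive function that is finite q.e. (because \(A_t<\infty\) for \(t<\zeta\)), and use that the Revuz formula exhibits \(u_\alpha\) as the \(\alpha\)-potential \(U_\alpha\mu\); the finiteness q.e. of \(U_\alpha\mu\) is exactly what prevents \(\mu\) from charging a polar set. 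Uniqueness is immediate, since the formula prescribes \(\int f\,d\mu\) for every \(f\) in a determining class.

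For (ii), I would first treat \(\mu\in\mathcal{S}_0\), a measure of finite energy integral. Then \(U_\alpha\mu\in\mathcal{F}\) has a quasi-continuous \(\alpha\)-excessive version \(u:=\widetilde{U_\alpha\mu}\), and applying the Fukushima decomposition to the potential \(u\), the process \(e^{-\alpha t}u(X_t)-u(X_0)\) splits into a martingale additive functional of finite energy plus a continuous additive functional of bounded variation, which I write as \(-\int_0^t e^{-\alpha s}\,dA_s\); this defines a PCAF \(A\), and comparison with the Revuz formula of (i) identifies its Revuz measure as \(\mu\). For general \(\mu\in\mathcal{S}\), choose a nest \(\{F_k\}\) with \(\mu(F_k)<\infty\) and cut \(\mu\) down to measures \(\mu_k:=\mathbf{1}_{F_k}\cdot(g_k\,\mu)\nearrow\mu\) lying in \(\mathcal{S}_0\); construct the PCAF \(A^k\) for each \(\mu_k\), check from the Revuz formula that the \(A^k\) are consistent and increasing on a common defining set, and set \(A:=\lim_k A^k\). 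Uniqueness up to \(m\)-equivalence then follows from (i), since two PCAFs with the same Revuz measure have identical \(\alpha\)-potentials \(m\)-a.e.

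The step I expect to be the main obstacle is this last limiting procedure in (ii). One must secure a single defining set and a single \(m\)-inessential exceptional set off which every \(A^k\), and hence the limit \(A\), is finite, right-continuous and additive, and then verify by monotone convergence that the Revuz measure of \(A\) is all of \(\mu\) rather than a proper sub-measure. Controlling this passage — equivalently, ruling out a loss of mass or a blow-up as \(k\to\infty\) — is precisely where the smoothness of \(\mu\) (finiteness on the nest \(\{F_k\}\)) must be used decisively.
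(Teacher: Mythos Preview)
The paper does not prove this theorem: it is quoted verbatim from \cite[Theorem~4.1.1]{CF} as a preliminary result, with no argument given. There is therefore no ``paper's own proof'' to compare your proposal against.

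That said, your outline is broadly the strategy followed in \cite{CF,FOT}: subadditivity of \(t\mapsto\mathbb{E}_m[\int_0^t f(X_s)\,dA_s]\) to get the limit in (i), construction of \(A\) first for \(\mu\) in \(\mathcal{S}_0\) (actually \(\mathcal{S}_{00}\)) and then passage to general \(\mu\in\mathcal{S}\) via a nest. One point of caution: you invoke the ``Fukushima decomposition'' of \(u=\widetilde{U_\alpha\mu}\) to produce \(A\), but in \cite{CF} the Fukushima decomposition (Theorem~4.2.6 there) is established \emph{after} the Revuz correspondence, so using it here would be circular in that logical order. What the standard proof actually uses at this step is the Doob--Meyer decomposition of the bounded nonnegative supermartingale \(e^{-\alpha t}\widetilde{U_\alpha\mu}(X_t)\) (for \(\mu\in\mathcal{S}_{00}\), so that \(U_\alpha\mu\) is bounded), which yields the PCAF directly without appealing to Theorem~\ref{FukuDecomp}. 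With that terminological fix, your sketch matches the reference's approach.
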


For example, for a bounded positive Borel function \(f\), we set
\[A_t:=\int_0^t f(X_s)ds,\]
then \(A:=\{A_t\}_t\) is a PCAF in the strict sense and \(A\) corresponds to a smooth measure \(fdm\). As another example, when a capacity of \(x_{0}\in E\) is positive, the local time \(L^{x_0}\) is a PCAF and its corresponding smooth measure is a Dirac measure \(\delta_{x_0}\).

 We remark that the bijection of the Revuz correspondence is characterized only by the underlying measure \(m\) for a regular Dirichlet form. On the other hand, in more general settings for right processes, \(m\) and its potential part are needed for the characterization. See \cite[(5.19), (6.29)]{FG} for example.

Next we prepare the basic definitions for the Fukushima decomposition. An additive functional \(M\) is called a \textit{martingale additive functional} (MAF in abbreviation) if \(\mathbb{E}_x[M_t^2]<\infty\) and \(\mathbb{E}_x[M_t]=0\) for each \(t>0\) and q.e. \(x\in E.\) We can check that an MAF \(M\) is \(\mathbb{P}_x\)-martingale for q.e. \(x\in E,\) so there exists \(\langle M \rangle \in {\bf A}_c^+\) satisfying \(\mathbb{E}_x[M_t^2]=\mathbb{E}_x[\langle M \rangle_t]\) for any \(t>0\) and q.e. \(x\in E.\) For an additive functional \(A\), the energy \(e(A)\) of \(A\) is defined by
\[e(A):=\lim_{t\searrow 0}\frac{1}{2t}\mathbb{E}_m[A^2_t].\]
A continuous additive functional \(N\) is called a \textit{continuous additive functional of zero energy} if \(\mathbb{E}_x[|N_t|]<\infty\) for q.e. \(x\in E\) and each \(t>0\), and \(e(N)=0.\)

The following decomposition is called \textit{the Fukushima decomposition}.
\begin{theorem}[{\cite[Theorem 4.2.6]{CF}}]\label{FukuDecomp}
For any \(u\in \mathcal{F}_e\), there exists an MAF \(M^{[u]}\) and a continuous additive functional of zero energy \(N^{[u]}\) uniquely such that
\[\widetilde{u}(X_t)-\widetilde{u}(X_0)=M^{[u]}+N^{[u]}, \mathbb{P}_x \text{-a.s.\ for\ }t\geq 0 \text{\ and\ q.e.\ }x\in E.\]
\end{theorem}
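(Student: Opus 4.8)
The plan is to pass from the Dirichlet-form side to the PCAF side through the Fukushima decomposition (Theorem \ref{FukuDecomp}) of the potentials \(u:=U_\alpha\mu\) and \(v:=U_\alpha\nu\), and then to match the three reference measures \(m,\kappa,\nu_0\) against the Beurling--Deny pieces of the energy measure of \(\langle M^{[u]},M^{[v]}\rangle\).

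First I would record the defining relation between an \(\alpha\)-potential and its Revuz PCAF: for \(\mu\in\mathcal S_0\) one has \(u\in\mathcal F\), and the zero-energy part of its Fukushima decomposition satisfies \(dN^{[u]}_s=\alpha\widetilde u(X_s)\,ds-dA_s\). Substituting this into \(\widetilde u(X_t)-\widetilde u(X_0)=M^{[u]}_t+N^{[u]}_t\) and computing the stochastic differential of \(e^{-\alpha t}\widetilde u(X_t)\), the drift cancels and one finds that \(e^{-\alpha t}\widetilde u(X_t)+\widetilde A_t=\widetilde u(X_0)+\int_0^t e^{-\alpha s}\,dM^{[u]}_s\) is a martingale. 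Letting \(t\to\infty\), so that \(e^{-\alpha t}\widetilde u(X_t)\to 0\), yields the key representation
\[\widetilde A_\infty=\widetilde u(X_0)+\int_0^\infty e^{-\alpha s}\,dM^{[u]}_s,\qquad\mathbb P_x\text{-a.s. for q.e. }x,\]
and the analogous identity for \(\widetilde B_\infty\) with \(M^{[v]}\); taking \(\mathbb E_x\) also gives \(\widetilde u(x)=\mathbb E_x[\widetilde A_\infty]\) and \(\widetilde v(x)=\mathbb E_x[\widetilde B_\infty]\).

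Second, since \(X_0=x\) is deterministic under \(\mathbb P_x\) and each martingale integral \(\int_0^\infty e^{-\alpha s}\,dM^{[\cdot]}_s\) has mean zero, expanding the product and applying the It\^o isometry for stochastic integrals against martingale additive functionals gives, q.e. \(x\),
\[\mathbb E_x[\widetilde A_\infty\widetilde B_\infty]=\widetilde u(x)\widetilde v(x)+\mathbb E_x\!\left[\int_0^\infty e^{-2\alpha s}\,d\langle M^{[u]},M^{[v]}\rangle_s\right].\]
Integrating this against \(\alpha m+\tfrac12\kappa+\tfrac12\nu_0\), the contribution \(\alpha\int_E\widetilde u\widetilde v\,dm=\alpha\langle u,v\rangle_m\) already supplies the zero-order part of \(\mathcal E_\alpha(u,v)\), so it remains to treat the covariation term. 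For its \(m\)-part I would apply (by polarization) the discounted Revuz formula \(\mathbb E_m[\int_0^\infty e^{-2\alpha s}\,dC_s]=\int_E\widetilde{R_{2\alpha}1}\,d\mu_C\) with \(C=\langle M^{[u]},M^{[v]}\rangle\) and Revuz measure \(\mu_C=\mu_{\langle u,v\rangle}\), using \(R_{2\alpha}1(x)=\tfrac1{2\alpha}(1-\mathbb E_x[e^{-2\alpha\zeta}])\); writing \(g(x):=\mathbb E_x[\int_0^\infty e^{-2\alpha s}\,d\langle M^{[u]},M^{[v]}\rangle_s]\), this turns \(2\alpha\int_E g\,dm\) into \(\int_E(1-\widetilde{\mathbb E_\cdot[e^{-2\alpha\zeta}]})\,d\mu_{\langle u,v\rangle}\).

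Third, I would invoke the Beurling--Deny decomposition to split \(\mu_{\langle u,v\rangle}\) into its strongly local, jumping and killing parts, the killing part being \(\widetilde u\widetilde v\,\kappa\), and use the relation \(\tfrac12\mu_{\langle u,v\rangle}(E)=\mathcal E(u,v)-\tfrac12\int_E\widetilde u\widetilde v\,d\kappa\). Collecting all contributions, the asserted equality \(\mathbb E_{\alpha m+\kappa/2+\nu_0/2}[\widetilde A_\infty\widetilde B_\infty]=\mathcal E_\alpha(u,v)\) reduces to the single boundary identity
\[\int_E\widetilde{\mathbb E_\cdot[e^{-2\alpha\zeta}]}\,d\mu_{\langle u,v\rangle}=\int_E g\,d\kappa+\int_E\mathbb E_\cdot[\widetilde A_\infty\widetilde B_\infty]\,d\nu_0,\]
which I expect to be the main obstacle. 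This is exactly where \(\kappa\) and \(\nu_0\) become indispensable: the left-hand side is the portion of the covariation energy carried by the lifetime \(\zeta\), and it must be split according to the two ways the process reaches \(\partial\) --- a jump to the cemetery, governed by the killing measure \(\kappa\), and a continuous escape on \(\{X_{\zeta-}=\partial\}\), governed by the functional \(\nu_0\), whose definition singles out precisely \(\mathbf 1_{\{\partial\}}(X_{\zeta-})\). Establishing this identity requires a careful analysis of the behaviour of \(M^{[u]}\) and \(M^{[v]}\) at the terminal time, together with the representation of \(\nu_0\) as the \(t\searrow 0\) limit of the entrance laws (Lemma \ref{nu_0Lemma}), and this is the step I anticipate to be the most delicate.
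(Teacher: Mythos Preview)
Your proposal does not address the stated theorem at all. Theorem~\ref{FukuDecomp} is the Fukushima decomposition itself: it asserts the existence and uniqueness of the martingale additive functional \(M^{[u]}\) and the zero-energy part \(N^{[u]}\) for any \(u\in\mathcal F_e\). In the paper this is a preliminary result quoted from \cite[Theorem~4.2.6]{CF} and is not proved; there is no ``paper's own proof'' to compare against. What you have written is instead an outline of a proof of Proposition~\ref{main2-2}, the energy identity \(\mathbb E_{\alpha m+\kappa/2+\nu_0/2}[\widetilde A_\infty\widetilde B_\infty]=\mathcal E_\alpha(U_\alpha\mu,U_\alpha\nu)\). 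You even invoke Theorem~\ref{FukuDecomp} as a tool in your first paragraph, which makes the circularity explicit.

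If your intent was Proposition~\ref{main2-2}, then your route is genuinely different from the paper's. The paper does not pass through the martingale representation \(\widetilde A_\infty=\widetilde u(X_0)+\int_0^\infty e^{-\alpha s}\,dM^{[u]}_s\) or the energy measure \(\mu_{\langle u,v\rangle}\); instead it reduces by polarization to \(A=B\), computes \(\mathbb E_m[(\widetilde A_\infty)^2]=2\langle R_{2\alpha}1,U_\alpha\mu\rangle_\mu\) directly from the iterated-potential formula \(\mathbb E_x[(\widetilde A_\infty)^2]=2U_A^{2\alpha}U_A^\alpha 1(x)\), and then handles the \(\kappa\)- and \(\nu_0\)-terms by the L\'evy-system identity \(\lim_l\widetilde{U_\alpha({\bf 1}_{F_l}\kappa)}=1-\alpha R_\alpha 1-\varphi_\alpha\) together with \cite[Theorem~5.4.3(iv)]{CF}. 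This yields the three pieces \(2\alpha R_{2\alpha}1\), \(1-2\alpha R_{2\alpha}1-\varphi_{2\alpha}\), and \(\varphi_{2\alpha}\), which sum to \(1\) and give \(\int U_\alpha\mu\,d\mu=\mathcal E_\alpha(U_\alpha\mu)\) immediately. Your approach leaves as the admitted ``main obstacle'' the boundary identity matching \(\int\mathbb E_\cdot[e^{-2\alpha\zeta}]\,d\mu_{\langle u,v\rangle}\) against the \(\kappa\)- and \(\nu_0\)-integrals; the paper's computation sidesteps this entirely and is both shorter and complete.
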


The following decomposition is called \textit{the Beurling-Deny decomposition}.
\begin{theorem}[{\cite[Section 4.3]{CF}}]\label{BDdecomp}
For \(f,g\in \mathcal{F}\), 
\[\mathcal{E}(f,g)=\mathcal{E}^{(c)}(f,g)+\frac{1}{2}\int_{E\times E \setminus d} (\widetilde{f}(x)-\widetilde{f}(y)) (\widetilde{g}(x)-\widetilde{g}(y))J(dxdy)+\int_E \widetilde{f}\widetilde{g}d\kappa.\]
Here \(\mathcal{E}^{(c)}\) is a strongly local Dirichlet form, that is a Dirichlet form satisfying \(\mathcal{E}^{(c)}(f,g)=0\) whenever \(f\in \mathcal{F}\) has a compact support and \(g\in \mathcal{F}\) is a constant on a neighbourhood of a support of \(f\), \(J\) is a symmetric Radon measure on \((E\times E) \setminus d\), where \(d\) denotes the diagonal set, and \(\kappa\) is a Radon measure on \(E\).
\end{theorem}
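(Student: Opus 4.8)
The plan is to establish the identity first on the core $\mathcal{C} := \mathcal{F} \cap C_c(E)$, which is $\mathcal{E}_1$-dense in $\mathcal{F}$ by regularity, and then extend to all of $\mathcal{F}$ by $\mathcal{E}_1$-continuity. The conceptual engine is the first Beurling--Deny positivity: for $u, v \in \mathcal{C}$ with $u, v \geq 0$ and $\operatorname{supp} u \cap \operatorname{supp} v = \emptyset$, one has $\mathcal{E}(u, v) \leq 0$. To prove this I would first upgrade the Markovian unit-contraction hypothesis to the statement that every normal contraction reduces energy, $\mathcal{E}(Tu) \leq \mathcal{E}(u)$; applying this to the positive-part contraction of $u - tv$, which equals $u$ exactly because the supports are disjoint, gives $\mathcal{E}(u) \leq \mathcal{E}(u) - 2t\,\mathcal{E}(u,v) + t^2\mathcal{E}(v)$ for every $t > 0$, and letting $t \searrow 0$ yields $\mathcal{E}(u, v) \leq 0$.

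Next I would construct the jumping measure. The bilinear functional $(u, v) \mapsto -\mathcal{E}(u, v)$, defined on pairs in $\mathcal{C}$ with disjoint supports, is symmetric and non-negative on non-negative arguments by the positivity just proved. A bi-measure (Riesz--Markov type) representation, built on each off-diagonal product $U \times V$ with $\overline{U} \cap \overline{V} = \emptyset$ and then patched, furnishes a unique symmetric positive Radon measure $J$ on $(E \times E) \setminus d$ with $-\mathcal{E}(u, v) = \int u(x) v(y)\, J(dx\,dy)$ for all disjoint-support pairs. Setting $\mathcal{E}^{(j)}(f, g) := \frac{1}{2}\int_{E \times E \setminus d}(f(x) - f(y))(g(x) - g(y))\, J(dx\,dy)$, a direct expansion shows $\mathcal{E}^{(j)}$ coincides with $\mathcal{E}$ on disjoint-support pairs, so the remainder $\mathcal{E}^{(1)} := \mathcal{E} - \mathcal{E}^{(j)}$ vanishes on such pairs, i.e.\ is local.

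Then I would extract the killing measure. Using locality of $\mathcal{E}^{(1)}$, for $f \in \mathcal{C}$ the quantity $\mathcal{E}^{(1)}(f, v)$ is independent of the choice of $v \in \mathcal{C}$ with $v \equiv 1$ on a neighbourhood of $\operatorname{supp} f$, since any two such choices differ by a function with support disjoint from $\operatorname{supp} f$. The functional $f \mapsto \mathcal{E}^{(1)}(f, v)$ is positive, and Riesz representation yields a positive Radon measure $\kappa$ with $\mathcal{E}^{(1)}(f, v) = \int_E f\, d\kappa$. Defining $\mathcal{E}^{(c)}(f,g) := \mathcal{E}^{(1)}(f,g) - \int_E fg\, d\kappa$, strong locality follows by localization: if $g \equiv c$ near $\operatorname{supp} f$, splitting $g = cv + (g - cv)$ and using locality of $\mathcal{E}^{(1)}$ together with the definition of $\kappa$ gives $\mathcal{E}^{(c)}(f, g) = c\int_E f\, d\kappa - c\int_E f\, d\kappa = 0$; that $\mathcal{E}^{(c)}$ is itself a (strongly local) Dirichlet form is then inherited from $\mathcal{E}$.

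Finally I would extend from $\mathcal{C}$ to $\mathcal{F}$. Once the three non-negative forms sum to $\mathcal{E}$ on the core, each is dominated by $\mathcal{E}$, hence $\mathcal{E}_1$-continuous and stable under $\mathcal{E}_1$-limits; combined with the facts that $J$ does not charge $(N \times E) \cup (E \times N)$ for polar $N$ and that $\kappa$ charges no polar set, this legitimizes passing to quasi-continuous versions $\widetilde{f}, \widetilde{g}$ for general $f, g \in \mathcal{F}$. The step I expect to be the main obstacle is the bi-measure representation together with the near-diagonal control of $J$: one must verify that the disjoint-support functional genuinely arises from a Radon measure on $(E\times E)\setminus d$ and that the jump integral $\mathcal{E}^{(j)}(f,f)$ converges for core $f$, the vanishing of $(f(x)-f(y))^2$ on the diagonal compensating a possible blow-up of $J$ there; this is handled by decomposing $f$ into pieces of small support and summing the disjoint-support identities, the total being controlled by $\mathcal{E}(f,f)$. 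An alternative route that simultaneously identifies $J$ and $\kappa$ is probabilistic: apply the Fukushima decomposition (Theorem \ref{FukuDecomp}) to $u \in \mathcal{C}$, split the martingale part $M^{[u]}$ into its continuous, jumping, and killing components, and read off the three terms from the Revuz measures of their quadratic variations via the L\'evy system of $X$.
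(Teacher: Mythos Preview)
The paper does not prove this theorem; it is quoted verbatim from \cite[Section 4.3]{CF} as background in the preliminaries, so there is no ``paper's own proof'' to compare against. Your sketch is the classical analytic derivation of the Beurling--Deny formula (first Beurling--Deny positivity $\Rightarrow$ construction of $J$ by Riesz--Markov on off-diagonal products $\Rightarrow$ extraction of $\kappa$ from the local remainder $\Rightarrow$ strong locality of the residue $\Rightarrow$ extension by $\mathcal{E}_1$-density), which is precisely the route taken in \cite[\S3.2]{FOT} and \cite[\S4.3]{CF}; the probabilistic alternative via the L\'evy system that you mention at the end is also developed in \cite[\S4.3]{CF}. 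Nothing in the paper itself adds to or modifies these arguments.
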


The Radon measure \(\kappa\) appearing in the Beurling-Deny decomposition is called \textit{the killing measure} of \(X\), and \(\kappa\) is a smooth measure (\cite[p.145]{CF}). The killing measure \(\kappa\) is characterized by the L\'{e}vy system, see \cite[Section A.3.4]{CF} for details.

\section{Convergence of PCAFs of finite energy integrals for quasi-every starting point}\label{contimainsec}
To consider the relationship between convergence of Revuz measures and PCAFs, we describe the topology for smooth measures of finite energy integrals introduced by Nishimori, Tomisaki, Tsuchida and Uemura \cite{NTTU}, and we state new results concerning the continuity of PCAFs in the sense of \(L^{2-}(\mathbb{P}_x)\) for q.e. \(x\in E\). Let \((\mathcal{E}, \mathcal{F})\) be a regular Dirichlet form on \(L^2(E;m)\) associated with an \(m\)-symmetric Hunt process \(X\).

\begin{definition}[{\cite[p.80]{CF}}]
A positive Radon measure \(\mu\) on \(E\) is called a measure of \textit{finite energy integral} if there exists \(C_{\mu}>0\) such that, for any \(g\in \mathcal{F}\cap C_c(E)\),
\[\int_E |g|d\mu \leq C_{\mu}\sqrt{\mathcal{E}_1(g)}.\]

Denote by \(\mathcal{S}_0\) the family of all measures of finite energy integrals.
\end{definition}
We remark that \(\mathcal{S}_0\subset \mathcal{S}\). For \(\mu \in \mathcal{S}_0\) and \(\alpha>0\), by the Riesz representation theorem, there exists a unique function \(U_{\alpha}\mu\), which is called \textit{an \(\alpha\)-potential}, such that \(\mathcal{E}_{\alpha}(U_{\alpha}\mu, g)=\int_Eg d\mu\) for any \(g\in \mathcal{F}\cap C_c(E).\) Moreover, for \(A\in {\bf A}_c^+\), \(\alpha>0\) and a bounded Borel function \(f\), we define \(U^{\alpha}_Af\) by
\begin{eqnarray}
U^{\alpha}_Af(x):=\mathbb{E}_x\left[\int_0^{\infty} e^{-\alpha t}f(X_t)dA_t \right] \label{eq:U_A}
\end{eqnarray}
for q.e. \(x\in E.\) By \cite[Lemma 5.1.3]{FOT}, \(U^{\alpha}_Af\) is a quasi-continuous version of \(U_{\alpha}(f\mu)\). See \cite[IV, \S 2]{BG}, \cite[\S 2.3]{CF}, \cite[\S 2.2]{FOT} for details of \(\alpha\)-potentials.

We define subclass \(\mathcal{S}_{00}\) of \(\mathcal{S}_{0}\) by
\[ \mathcal{S}_{00}:=\{\mu \in \mathcal{S}_0 : \mu(E)<\infty, ||U_1\mu||_{\infty}<\infty \}.\]
We remark that, for \(\mu \in \mathcal{S}_{00}\) and \(\alpha>0\), \(||U_{\alpha}\mu||_{\infty}<\infty\) follows from the resolvent equation \(U_{\alpha}\mu-U_{\beta}\mu+(\alpha-\beta)R_{\alpha}U_{\beta}\mu=0\).

The following metric on \(\mathcal{S}_0\) is introduced by \cite{NTTU}, which is stronger than the vague topology, in order to study the relationship between convergences of PCAFs and Revuz measures.

\begin{definition}[\cite{NTTU}]
Define a metric \(\rho\) on \(\mathcal{S}_0\) by \(\rho(\mu, \nu):=\sqrt{\mathcal{E}_1(U_1\mu -U_1\nu)}\) for \(\mu, \nu \in \mathcal{S}_0\). 
\end{definition}

\begin{proposition}[{\cite[Lemma 3.1, 3.4, 3.5]{NTTU}}]
The metric space \((\mathcal{S}_0, \rho)\) is complete and separable.
\end{proposition}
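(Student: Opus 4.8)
The plan is to realize \((\mathcal{S}_0,\rho)\) as a subset of a separable Hilbert space via the \(1\)-potential map \(\Phi(\mu):=U_1\mu\). Since \(\mathcal{E}_1(U_1\mu,g)=\int_E g\,d\mu\) for \(g\in\mathcal{F}\cap C_c(E)\), the quantity \(\rho(\mu,\nu)=\sqrt{\mathcal{E}_1(U_1\mu-U_1\nu)}\) is exactly the \(\mathcal{E}_1\)-distance between the images \(\Phi(\mu),\Phi(\nu)\) in the Hilbert space \((\mathcal{F},\mathcal{E}_1)\), so symmetry and the triangle inequality are inherited. To see that \(\rho\) is a genuine metric I would check injectivity of \(\Phi\): if \(\rho(\mu,\nu)=0\) then \(U_1\mu=U_1\nu\), whence \(\int g\,d\mu=\mathcal{E}_1(U_1\mu,g)=\mathcal{E}_1(U_1\nu,g)=\int g\,d\nu\) for every \(g\in\mathcal{F}\cap C_c(E)\); regularity makes \(\mathcal{F}\cap C_c(E)\) uniformly dense in \(C_c(E)\), and since both \(\mu,\nu\) are Radon this forces \(\mu=\nu\).

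For separability I would first establish that \((\mathcal{F},\mathcal{E}_1)\) is itself separable and then pass to the subspace \(\Phi(\mathcal{S}_0)\). Because \(E\) is separable metric and \(m\) is Radon, \(L^2(E;m)\) is separable; fix a countable \(\|\cdot\|_{L^2(m)}\)-dense set \(\{f_n\}\subset L^2(E;m)\). Using the resolvent identity \(\mathcal{E}_1(R_1f,v)=\langle f,v\rangle_m\) for \(v\in\mathcal{F}\) gives \(\mathcal{E}_1(R_1f-R_1g)=\langle f-g,R_1(f-g)\rangle_m\le \|f-g\|_{L^2(m)}^2\), so \(R_1\) is continuous from \(L^2(m)\) into \((\mathcal{F},\mathcal{E}_1)\), and the same identity shows \(R_1(L^2(m))\) is \(\mathcal{E}_1\)-dense in \(\mathcal{F}\). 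Hence \(\{R_1f_n\}\) is a countable \(\mathcal{E}_1\)-dense subset of \(\mathcal{F}\), so \((\mathcal{F},\mathcal{E}_1)\) is separable, and the metric subspace \(\Phi(\mathcal{S}_0)\cong(\mathcal{S}_0,\rho)\) is separable as well.

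For completeness the crux is that \(\Phi(\mathcal{S}_0)\) is \(\mathcal{E}_1\)-closed in \(\mathcal{F}\): given a \(\rho\)-Cauchy sequence \(\{\mu_n\}\), the images \(U_1\mu_n\) are \(\mathcal{E}_1\)-Cauchy, hence converge to some \(u\in\mathcal{F}\), and it remains to produce \(\mu\in\mathcal{S}_0\) with \(U_1\mu=u\). I would identify the image as \(\Phi(\mathcal{S}_0)=\{u\in\mathcal{F}:\mathcal{E}_1(u,v)\ge 0\ \text{for all}\ v\in\mathcal{F}\ \text{with}\ v\ge 0\ m\text{-a.e.}\}\). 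The inclusion \(\subseteq\) is immediate since \(\mathcal{E}_1(U_1\mu,v)=\int_E\widetilde v\,d\mu\ge 0\), and this set is an intersection of \(\mathcal{E}_1\)-closed half-spaces, hence closed; in particular the limit \(u\) inherits the positivity condition by passing to the limit in \(\mathcal{E}_1(U_1\mu_n,v)\ge 0\). The reverse inclusion \(\supseteq\) is the heart of the matter and the step I expect to be the main obstacle, since from a \(u\) obeying \(\mathcal{E}_1(u,v)\ge 0\) for \(0\le v\in\mathcal{F}\) I must reconstruct an honest Radon measure rather than a mere functional.

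To do this I would regard \(L(g):=\mathcal{E}_1(u,g)\) as a positive linear functional on \(\mathcal{F}\cap C_c(E)\) and apply the Riesz--Markov theorem; the nontrivial point is local sup-norm continuity. For \(0\le g\in\mathcal{F}\cap C_c(E)\) supported in a compact \(K\subset G\) with \(G\) relatively compact open, comparing \(g\le\|g\|_\infty\,e_G\) q.e. with the \(1\)-equilibrium potential \(e_G\) of \(G\) and using positivity (hence monotonicity) of \(L\) gives \(L(g)\le\|g\|_\infty\,\mathcal{E}_1(u,e_G)\le \|g\|_\infty\sqrt{\mathcal{E}_1(u)}\sqrt{\mathrm{Cap}_1(G)}\); splitting a general \(g\) into \(g^+,g^-\in\mathcal{F}\cap C_c(E)\) yields \(|L(g)|\le 2\sqrt{\mathcal{E}_1(u)\,\mathrm{Cap}_1(G)}\,\|g\|_\infty\). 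Riesz--Markov then furnishes a positive Radon measure \(\mu\) with \(\int g\,d\mu=\mathcal{E}_1(u,g)\); the normal-contraction estimate \(\mathcal{E}_1(|g|)\le\mathcal{E}_1(g)\) gives \(\int|g|\,d\mu=\mathcal{E}_1(u,|g|)\le\sqrt{\mathcal{E}_1(u)}\sqrt{\mathcal{E}_1(g)}\), so \(\mu\in\mathcal{S}_0\), and by \(\mathcal{E}_1\)-density of \(\mathcal{F}\cap C_c(E)\) we obtain \(U_1\mu=u\). Consequently \(\rho(\mu_n,\mu)\to 0\), which proves completeness.
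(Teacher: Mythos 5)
The paper does not actually prove this proposition: it is imported from \cite[Lemmas 3.1, 3.4, 3.5]{NTTU}, so there is no in-paper argument to compare against. Your proof is correct and is essentially the standard argument on which the cited lemmas rest: embed \((\mathcal{S}_0,\rho)\) isometrically into the separable Hilbert space \((\mathcal{F},\mathcal{E}_1)\) via \(\mu\mapsto U_1\mu\) and identify the image with the closed convex cone of \(1\)-potentials; your Riesz--Markov reconstruction of \(\mu\) from a \(u\) satisfying \(\mathcal{E}_1(u,v)\geq 0\) for all nonnegative \(v\) is precisely the content of the characterization of \(\alpha\)-potentials in \cite[\S 2.2]{FOT}, re-derived from scratch. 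Two hairline points are worth tightening. First, in the injectivity step, uniform density of \(\mathcal{F}\cap C_c(E)\) in \(C_c(E)\) alone does not let you pass to the limit in \(\int g_n\,d\mu\) for a Radon measure, since the supports of the approximants may escape every compact set; fix this by multiplying by a cut-off \(\chi\in\mathcal{F}\cap C_c(E)\) with \(0\leq\chi\leq 1\) and \(\chi=1\) on \(\mathrm{supp}\,f\) (bounded elements of \(\mathcal{F}\) form an algebra), so all approximants live in a fixed compact set of finite \(\mu\)-measure. Second, in the inclusion \(\Phi(\mathcal{S}_0)\subseteq\{u:\mathcal{E}_1(u,v)\geq 0\ \text{for}\ 0\leq v\in\mathcal{F}\}\), the identity \(\mathcal{E}_1(U_1\mu,v)=\int_E\widetilde{v}\,d\mu\) for general \(v\in\mathcal{F}\) requires the quasi-continuous version \(\widetilde{v}\) together with the observation that \(v\geq 0\) \(m\)-a.e.\ forces \(\widetilde{v}\geq 0\) q.e.; this matters because \(\mu\) charges no polar set but need not be absolutely continuous with respect to \(m\). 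With these remarks incorporated, the argument is complete.
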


\begin{proposition}[{\cite[Proposition 3.8]{NTTU}}]
Let \(\mu_n, \mu \in \mathcal{S}_0\). If \(\rho(\mu_n,\mu)\to 0\) as \(n\to \infty\), then \(\mu_n\) converges to \(\mu\) vaguely, that is, for any \(f\in C_c(E),\) \(\lim_{n\to \infty}\int_E fd\mu_n=\int_E fd\mu.\)
\end{proposition}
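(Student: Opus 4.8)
The plan is to convert the Hilbert-space convergence $U_1\mu_n\to U_1\mu$ in $(\mathcal{F},\mathcal{E}_1)$ encoded by $\rho(\mu_n,\mu)\to0$ into vague convergence, in three stages: first test against functions that already lie in $\mathcal{F}\cap C_c(E)$, then establish a uniform bound on the masses of compacta, and finally approximate a general $f\in C_c(E)$. For the first stage, note that for any $g\in\mathcal{F}\cap C_c(E)$ the defining identity of the $1$-potential gives $\int_E g\,d\mu_n=\mathcal{E}_1(U_1\mu_n,g)$ and $\int_E g\,d\mu=\mathcal{E}_1(U_1\mu,g)$, so by the Cauchy--Schwarz inequality in $(\mathcal{F},\mathcal{E}_1)$
\[
\Bigl|\int_E g\,d\mu_n-\int_E g\,d\mu\Bigr|=\bigl|\mathcal{E}_1(U_1\mu_n-U_1\mu,\,g)\bigr|\le\rho(\mu_n,\mu)\,\sqrt{\mathcal{E}_1(g)}\longrightarrow0,
\]
which settles the assertion for test functions in $\mathcal{F}\cap C_c(E)$.

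For the second stage I would record that $\rho$-convergence forces uniform control of the finite-energy constants. Since $\bigl|\sqrt{\mathcal{E}_1(U_1\mu_n)}-\sqrt{\mathcal{E}_1(U_1\mu)}\bigr|\le\rho(\mu_n,\mu)\to0$, the quantity $M:=\sup_n\sqrt{\mathcal{E}_1(U_1\mu_n)}$ is finite, and the same potential identity with Cauchy--Schwarz yields $\int_E|g|\,d\mu_n\le M\sqrt{\mathcal{E}_1(g)}$ for every $g\in\mathcal{F}\cap C_c(E)$ and every $n$. Given a compact $K$, regularity of $(\mathcal{E},\mathcal{F})$ provides a cutoff $\varphi\in\mathcal{F}\cap C_c(E)$ with $0\le\varphi\le1$ and $\varphi\equiv1$ on $K$; then $\mu_n(K)\le\int_E\varphi\,d\mu_n\le M\sqrt{\mathcal{E}_1(\varphi)}$, so $\sup_n\mu_n(K)<\infty$ for each compact $K$, and likewise $\mu(K)<\infty$.

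For the third stage, fix $f\in C_c(E)$ with support in a compact $K$ and let $\varepsilon>0$. I would take a cutoff $\varphi\in\mathcal{F}\cap C_c(E)$ with $\varphi\equiv1$ on $K$, $0\le\varphi\le1$, and support in a compact $K'$, and pick $h\in\mathcal{F}\cap C_c(E)$ with $\|f-h\|_\infty<\varepsilon$ by the sup-norm density of $\mathcal{F}\cap C_c(E)$ in $C_c(E)$. Setting $g:=h\varphi\in\mathcal{F}\cap C_c(E)$ (using that $\mathcal{F}\cap L^\infty$ is an algebra), one has $\mathrm{supp}\,g\subset K'$ and, since $f\varphi=f$, $\|f-g\|_\infty=\|(f-h)\varphi\|_\infty<\varepsilon$. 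Splitting
\[
\Bigl|\int_E f\,d\mu_n-\int_E f\,d\mu\Bigr|\le\int_E|f-g|\,d\mu_n+\Bigl|\int_E g\,d\mu_n-\int_E g\,d\mu\Bigr|+\int_E|f-g|\,d\mu
\]
and noting that $f-g$ is supported in $K'$, the first and third terms are bounded by $\varepsilon\sup_n\mu_n(K')$ and $\varepsilon\mu(K')$, while the middle term tends to $0$ by the first stage; letting $n\to\infty$ and then $\varepsilon\downarrow0$ concludes the proof.

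The main obstacle is the second stage. The hypothesis controls only the potentials in energy, and the sup-norm density of $\mathcal{F}\cap C_c(E)$ in $C_c(E)$ is by itself too weak to pass to the limit against a general $f\in C_c(E)$ unless one also knows that the masses $\mu_n(K)$ do not blow up. The key point is precisely that $\rho$-convergence upgrades to the uniform finite-energy bound $C_{\mu_n}\le M$, which, together with the regular-Dirichlet-form Urysohn cutoff dominating $\mathbf{1}_K$, supplies the required uniform tightness-type control.
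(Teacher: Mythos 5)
Your proof is correct, but note that the paper offers no argument of its own for this proposition: it is imported verbatim from \cite[Proposition 3.8]{NTTU}, so there is no in-paper proof to compare against. Your three-stage argument --- the duality pairing \(\int_E g\,d\mu_n=\mathcal{E}_1(U_1\mu_n,g)\) with Cauchy--Schwarz for \(g\in\mathcal{F}\cap C_c(E)\), the uniform energy bound \(\sup_n\sqrt{\mathcal{E}_1(U_1\mu_n)}<\infty\) obtained from the reverse triangle inequality in \((\mathcal{F},\mathcal{E}_1)\), and the cutoff/sup-norm approximation using regularity and the algebra property of \(\mathcal{F}\cap L^\infty\) --- is the natural route and is complete; in particular you correctly fix the cutoff \(\varphi\) (and hence \(K'\)) depending only on \(\operatorname{supp} f\), with only \(h\) depending on \(\varepsilon\), which is what legitimizes sending \(n\to\infty\) before \(\varepsilon\downarrow 0\). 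Two routine points you could make explicit: the bound \(\int_E|g|\,d\mu_n\le M\sqrt{\mathcal{E}_1(g)}\) uses that the normal contraction \(|g|\) again lies in \(\mathcal{F}\cap C_c(E)\) with \(\mathcal{E}_1(|g|)\le\mathcal{E}_1(g)\) (though for your application to the nonnegative cutoff \(\varphi\) this is vacuous), and the existence of \(\varphi\in\mathcal{F}\cap C_c(E)\) with \(0\le\varphi\le1\) and \(\varphi\equiv1\) on \(K\) follows from sup-norm density plus Markovianity (approximate a Urysohn function exceeding \(3/2\) on \(K\) and truncate by \((0\vee\cdot)\wedge1\)).
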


\begin{remark}
\((\mathcal{F}, \mathcal{E}_1)\) is a Hilbert space and, for any \(\mu \in \mathcal{S}_0,\) \(\Phi_{\mu}:\mathcal{F}\ni f \mapsto \int \widetilde{f}d\mu \in \mathbb{R}\) is a bounded linear functional. The operator norm is equal to \(\rho\), that is,
\[\sup_{f\in \mathcal{F},\  \mathcal{E}_1(f) \leq 1}\left|\int_E f d\mu - \int_E f d\nu \right| = \rho(\mu, \nu).\]
\end{remark}

The following theorem is one of the results in \cite{NTTU}, which guarantees that subsequential almost sure convergence of PCAFs follows from the convergence of Revuz measures in \(\mathcal{S}_0\) with respect to \(\rho\).
\begin{theorem}[{\cite[Theorem 4.1]{NTTU}}]\label{NTTUmain}
For \(\mu_n, \mu \in \mathcal{S}_0\), let \(A^n, A \in {\bf A}_c^+\) be the corresponding PCAFs, respectively. If \(\rho(\mu_n,\mu)\to \infty\) as \(n\to 0\), then there exists a subsequence \(\{n_k\}\) such that \(A^{n_k}\) converges to \(A\) \(\mathbb{P}_x\)-almost surely for q.e. \(x\in E\) with the local uniform topology, that is, for q.e. \(x\in E\), \[\mathbb{P}_x\left(\lim_{n_k\to \infty}\sup_{0\leq t\leq T}|A_t^{n_k}-A_t|=0 \text{\ for\ any\ }T \right)=1.\]
\end{theorem}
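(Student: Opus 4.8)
The plan is to reduce the statement to a single quantitative maximal estimate bounding $\sup_{0\le t\le T}|A^n_t-A_t|$ by the distance $\rho(\mu_n,\mu)$, and then run a Borel--Cantelli argument. Throughout fix $\alpha=1$ and write $\widetilde{A_t}=\int_0^t e^{-s}\,dA_s$, so that $\widetilde{A_t}\nearrow\widetilde{A_\infty}$ and, by the definition of $U^{\alpha}_A$ and the remark following \eqref{eq:U_A}, $\mathbb{E}_x[\widetilde{A_\infty}]=\widetilde{U_1\mu}(x)$ for q.e. $x$. The starting point is the additivity/Markov computation $\int_t^\infty e^{-s}\,dA_s=e^{-t}\,\widetilde{A_\infty}\circ\theta_t$, which shows that
\[
M_t:=\widetilde{A_t}+e^{-t}\,\widetilde{U_1\mu}(X_t)=\mathbb{E}_x[\widetilde{A_\infty}\mid\mathcal{F}_t]
\]
is a $\mathbb{P}_x$-martingale with $M_\infty=\widetilde{A_\infty}$ for q.e. $x$. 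Writing the analogous decomposition for $A^n$ and subtracting gives $\widetilde{A^n_t}-\widetilde{A_t}=(M^n_t-M_t)-e^{-t}\,\widetilde{w_n}(X_t)$, where $w_n:=U_1\mu_n-U_1\mu\in\mathcal{F}$ satisfies $\mathcal{E}_1(w_n)=\rho(\mu_n,\mu)^2$.

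For the martingale term I would apply Doob's $L^2$ inequality together with the energy identity of Proposition \ref{main2-2}: taking $\alpha=1$ there and discarding the nonnegative contributions of $\kappa$ and $\nu_0$ yields, after expanding by bilinearity, $\mathbb{E}_m[(\widetilde{A^n_\infty}-\widetilde{A_\infty})^2]\le\mathcal{E}_1(w_n)=\rho(\mu_n,\mu)^2$, whence $\mathbb{E}_m[\sup_t|M^n_t-M_t|^2]\le4\rho(\mu_n,\mu)^2$. For the potential term I would use the maximal inequality $\mathbb{E}_m[\sup_t(e^{-t}|\widetilde{v}(X_t)|)^2]\le C\,\mathcal{E}_1(v)$ for $v\in\mathcal{F}$: splitting $v=v^+-v^-$ (normal contractions operate, so $\mathcal{E}_1(v^\pm)\le\mathcal{E}_1(v)$) reduces to $v\ge0$, where one dominates $\widetilde v$ by the quasi-continuous version of its $1$-reduite $\widehat v$, a $1$-excessive function in $\mathcal{F}$ of the form $U_1\sigma$ with $\mathcal{E}_1(\widehat v)\le\mathcal{E}_1(v)$; since then $e^{-t}\widetilde{\widehat v}(X_t)$ equals a nonnegative martingale minus an increasing process, Doob plus the energy bound again controls its supremum. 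Combining the two estimates gives $\mathbb{E}_m[\sup_{t\ge0}|\widetilde{A^n_t}-\widetilde{A_t}|^2]\le C\,\rho(\mu_n,\mu)^2$. Finally, integration by parts in $A_t=\int_0^t e^{s}\,d\widetilde{A_s}$ gives, for $D:=\widetilde{A^n}-\widetilde{A}$, the pathwise bound $\sup_{t\le T}|A^n_t-A_t|\le2e^{T}\sup_{t\le T}|D_t|$, so that
\[
\mathbb{E}_m\Big[\sup_{0\le t\le T}|A^n_t-A_t|^2\Big]\le C_T\,\rho(\mu_n,\mu)^2\longrightarrow0\qquad(T>0).
\]

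From here a standard subsequence extraction yields the almost sure statement under $\mathbb{P}_m$: choosing $n_k$ with $\rho(\mu_{n_k},\mu)\le2^{-k}$ and applying Chebyshev and Borel--Cantelli, diagonalising over $T\in\mathbb{N}$, one obtains $\sup_{0\le t\le T}|A^{n_k}_t-A_t|\to0$ for all $T$, $\mathbb{P}_x$-almost surely for $m$-a.e.\ $x$.

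The hard part is upgrading ``$m$-a.e.\ $x$'' to ``q.e.\ $x$'', since $m$-negligible sets need not be polar. The plan is to replace each $\mathbb{E}_m$-estimate above by its capacitary counterpart, i.e.\ to establish a strong-type capacitary maximal inequality of the form $\mathrm{Cap}_1(\{\sup_{0\le t\le T}|A^n_t-A_t|>\varepsilon\})\le C_T\varepsilon^{-2}\rho(\mu_n,\mu)^2$, the event being read through the hitting-probability description of $\mathrm{Cap}_1$. The martingale part of such an inequality rests on a capacitary version of Doob's inequality combined with the energy bound, while the potential part $e^{-t}\widetilde{w_n}(X_t)$ is handled by passing to a subsequence along which $\widetilde{w_n}\to0$ quasi-uniformly on a nest $\{F_k\}$ and using that $\sigma_{E\setminus F_k}\to\infty$ q.e.; on $\{\,t<\sigma_{E\setminus F_k}\,\}$ one has $X_t\in F_k$, where the convergence is uniform. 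A Borel--Cantelli argument in capacity then shows that, for q.e.\ $x$, the events $\{\sup_{0\le t\le T}|A^{n_k}_t-A_t|>\varepsilon_k\}$ occur only finitely often, giving the claimed $\mathbb{P}_x$-almost sure local uniform convergence for q.e.\ $x$. I expect the construction of this capacitary maximal inequality --- rather than the $L^2(\mathbb{P}_m)$ estimate, which is essentially Proposition \ref{main2-2} plus Doob --- to be the main obstacle.
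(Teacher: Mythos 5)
Your first half is essentially the paper's argument in disguise: your closed martingale \(M_t=\widetilde{A_t}+e^{-t}\widetilde{U_1\mu}(X_t)=\mathbb{E}_x[\widetilde{A_\infty}\mid\mathcal{F}_t]\) is exactly the Fukushima decomposition of the potential (Lemma \ref{FukuDecomp1}) after the \(e^{-t}\)-transform, and the chain Doob \(+\) energy identity (a \(\kappa,\nu_0\)-discarded Proposition \ref{main2-2}) \(+\) integration by parts to pass from \(\widetilde{A}\) to \(A\) matches the mechanics of the proofs of Theorem \ref{submain1} and Lemma \ref{EnergyLem}; your reduite argument for the potential term is also workable (the \(\mathcal{E}_1\)-minimizer over \(\{u\in\mathcal{F}:u\ge v\ m\text{-a.e.}\}\) is a \(1\)-potential \(U_1\sigma\) with \(\mathcal{E}_1\)-norm at most \(\mathcal{E}_1(v)\)). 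Up to the conclusion ``\(\mathbb{P}_x\)-a.s. for \(m\)-a.e. \(x\)'' along a subsequence, the proposal is sound.

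The genuine gap is precisely where you flag it: the upgrade from \(m\)-a.e. to q.e. As written, your ``capacitary maximal inequality'' \(\mathrm{Cap}_1(\{\sup_{0\le t\le T}|A^n_t-A_t|>\varepsilon\})\le C_T\varepsilon^{-2}\rho(\mu_n,\mu)^2\) does not typecheck -- the event is a subset of \(\Omega\), not of \(E\), and no hitting-time description of it as a subset of the state space is available, since \(A^n-A\) is not of the form \(\widetilde{u}(X_t)\); a ``capacitary Doob inequality'' for the martingale part is likewise not an existing tool, and your quasi-uniform-convergence treatment of \(\widetilde{w_n}(X_t)\) only handles one of the three terms. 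The standard and much simpler fix -- and the one the paper uses (see the proof of Theorem \ref{submain1} and the Remark following it) -- is to bypass capacity entirely: redo every estimate with \(\mathbb{E}_m\) replaced by \(\mathbb{E}_\nu\) for an arbitrary \(\nu\in\mathcal{S}_{00}\), using \(\|U_1\nu\|_\infty,\|U_2\nu\|_\infty<\infty\) to control the Doob/energy term (via \(\mathbb{E}_\nu[(\widetilde{A_\infty})^2]=2\int_E U_2\nu\,\widetilde{U_1\mu}\,d\mu\le\|U_2\nu\|_\infty\,\mathcal{E}_1(U_1\mu)\)) and \cite[Lemma 5.1.1]{FOT} for the potential term. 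This yields \(\mathbb{P}_\nu\bigl(\sup_{0\le t\le T}|A^n_t-A_t|\ge\delta\bigr)\le C_\nu\,\rho(\mu_n,\mu)\) with the \(\rho\)-dependence independent of \(\nu\); one then picks a single subsequence with \(\sum_k\rho(\mu_{n_k},\mu)<\infty\), applies Borel--Cantelli for each \(\nu\in\mathcal{S}_{00}\), and concludes that the exceptional set \(N=\{x:\mathbb{P}_x(\text{convergence fails})>0\}\) is charged by no \(\nu\in\mathcal{S}_{00}\), hence is polar by \cite[Corollary 2.3.11]{CF}. So the theorem does follow from your estimates once they are run against test measures in \(\mathcal{S}_{00}\) rather than against \(m\); the new capacitary inequality you anticipate as the main obstacle is neither available nor needed.
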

We remark that, in Theorem \ref{NTTUmain} (\cite[Theorem 4.1]{NTTU}), a subsequence \(\{n_k\}\) does not depend on q.e. \(x\in E\).

At the end of this section, we state new results on the convergence of PCAFs. The following result makes a slightly stronger claim than Theorem \ref{NTTUmain} (\cite[Theorem 4.1]{NTTU}).
\begin{theorem}\label{submain1}
For \(\mu_n, \mu \in \mathcal{S}_0\), let \(A^n, A \in {\bf A}_c^+\) be the corresponding PCAFs, respectively. If \(\rho(\mu_n,\mu)\to 0\) as \(n\to \infty\), then \(A^n\) converges to \(A\) with the local uniform topology in \(L^1(\mathbb{P}_x)\) for q.e. \(x\in E\), that is, for any \(T>0\) and q.e. \(x\in E\), \[\lim_{n\to \infty}\mathbb{E}_x\left[\sup_{0\leq t\leq T}|A_t^n-A_t|\right]=0.\]
\end{theorem}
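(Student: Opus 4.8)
The plan is to reduce the statement to a pointwise (in the starting point) maximal estimate for $A^n-A$ that is driven entirely by the potential difference $u_n:=U_1\mu_n-U_1\mu$, and then to upgrade this estimate to $L^1(\mathbb{P}_x)$ convergence for q.e.\ $x$ by feeding it into the subsequential almost sure convergence already furnished by Theorem \ref{NTTUmain}. First I would record the consequences of the hypothesis: $\rho(\mu_n,\mu)^2=\mathcal{E}_1(u_n)\to0$, so $u_n\to0$ in $(\mathcal{F},\mathcal{E}_1)$; in particular $\|u_n\|_{L^2(m)}\to0$, the family $\{U_1\mu_n\}$ is $\mathcal{E}_1$-bounded, and the energy measures satisfy $\mu_{\langle u_n\rangle}(E)\le2\mathcal{E}(u_n)\le2\rho(\mu_n,\mu)^2\to0$.

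The analytic core is an exact second-moment identity. Writing $v:=U_1\mu$, $v_n:=U_1\mu_n$ and $\widetilde{A}_t:=\int_0^t e^{-s}\,dA_s$, the Fukushima decomposition (Theorem \ref{FukuDecomp}) together with the characterization $N^{[v]}_t=\int_0^t\widetilde v(X_s)\,ds-A_t$ of the zero-energy part of a $1$-potential shows that
\[
\widetilde{A}_t+e^{-t}\widetilde v(X_t)=\mathbb{E}_x\bigl[\widetilde{A}_\infty\mid\mathcal{F}_t\bigr],\qquad \widetilde{A}_t+e^{-t}\widetilde v(X_t)-v(x)=\int_0^t e^{-s}\,dM^{[v]}_s,
\]
and similarly for $A^n$. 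Subtracting, using the linearity $M^{[v_n]}-M^{[v]}=M^{[u_n]}$ and the bilinearity of the sharp bracket, gives for q.e.\ $x$
\[
\mathbb{E}_x\bigl[(\widetilde{A^n}_\infty-\widetilde{A}_\infty)^2\bigr]=u_n(x)^2+\mathbb{E}_x\Bigl[\int_0^\infty e^{-2s}\,d\langle M^{[u_n]}\rangle_s\Bigr],
\]
along with the pathwise identity $\widetilde{A^n}_t-\widetilde{A}_t=u_n(x)+\int_0^t e^{-s}\,dM^{[u_n]}_s-e^{-t}\widetilde{u_n}(X_t)$. Combining this with Doob's $L^2$-inequality for the martingale $\int_0^{\cdot}e^{-s}\,dM^{[u_n]}_s$ and the elementary bound $\sup_{t\le T}|A^n_t-A_t|\le 2e^{T}\sup_{t\le T}|\widetilde{A^n}_t-\widetilde{A}_t|$ (obtained by integrating $dA=e^{s}\,d\widetilde{A}$ by parts) yields a maximal estimate of the form
\[
\mathbb{E}_x\Bigl[\sup_{0\le t\le T}|A^n_t-A_t|^2\Bigr]\le C_T\Bigl(u_n(x)^2+U_2\mu_{\langle u_n\rangle}(x)+\mathbb{E}_x\bigl[\sup_{t\le T}e^{-2t}\widetilde{u_n}(X_t)^2\bigr]\Bigr),
\]
in which the first term is governed by $\|u_n\|_{\mathcal{E}_1}^2$ and the middle term is the $2$-potential of a measure of mass at most $2\rho(\mu_n,\mu)^2$.

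I expect the genuine difficulties to be twofold. The boundary term $\sup_{t\le T}e^{-t}|\widetilde{u_n}(X_t)|$ is the path-supremum of the difference of the two $1$-excessive functions $v_n$ and $v$; since $e^{-t}\widetilde{v_n}(X_t)$ is a nonnegative supermartingale, I would control it through the supermartingale maximal inequality together with the $\mathcal{E}_1$-convergence $v_n\to v$, and this is precisely the estimate in which the contribution of the killing part of the form becomes visible. The second difficulty is the passage from the quadratic bound to $L^1(\mathbb{P}_x)$ convergence for q.e.\ $x$ along the \emph{full} sequence: the bounding potentials tend to $0$ only in $\mathcal{E}_1$, hence only quasi-everywhere along a subsequence, so no naive pointwise argument is available. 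Here I would argue indirectly: any subsequence admits, by Theorem \ref{NTTUmain}, a further subsequence along which $A^{n_k}\to A$ $\mathbb{P}_x$-almost surely and uniformly on $[0,T]$ for q.e.\ $x$, while the uniform bound $\sup_n\mathbb{E}_x[(\widetilde{A^n}_\infty)^2]<\infty$ read off from the identity above (using the $\mathcal{E}_1$-boundedness of $v_n$) renders $\{\sup_{t\le T}|A^{n_k}_t-A_t|\}_k$ uniformly integrable under $\mathbb{P}_x$; almost sure convergence plus uniform integrability then forces $\mathbb{E}_x[\sup_{t\le T}|A^{n_k}_t-A_t|]\to0$, and since the limit $A$ is common to all subsequences, the whole sequence converges. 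The delicate bookkeeping point — keeping the exceptional polar set independent of the chosen subsequence — is handled by working throughout with the single nest and the uniform subsequence provided by Theorem \ref{NTTUmain}, so that all the almost sure and integrability statements hold simultaneously off one fixed polar set.
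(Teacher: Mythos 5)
You have found the right skeleton, and it is in fact the same one the paper uses: your discounted identity \(\widetilde{A^n}_t-\widetilde{A}_t=\widetilde{u_n}(X_0)+\int_0^t e^{-s}\,dM^{[u_n]}_s-e^{-t}\widetilde{u_n}(X_t)\) is an exponentially-discounted repackaging of the Fukushima decomposition of the \(1\)-potentials (Lemma \ref{FukuDecomp1}, equation (\ref{eq:NTTU1})), and Doob's inequality for the martingale part together with the comparison of \(A\) with \(\widetilde A\) are exactly the paper's tools. The genuine gap is in your upgrade to q.e.\ \(x\), in both of its halves. First, the uniform integrability claim: you assert \(\sup_n\mathbb{E}_x[(\widetilde{A^n}_\infty)^2]<\infty\) for q.e.\ \(x\) ``read off from the identity using the \(\mathcal{E}_1\)-boundedness of \(v_n\).'' But \(\mathbb{E}_x[(\widetilde{A^n}_\infty)^2]=2\,\widetilde{U_2(\widetilde{U_1\mu_n}\,\mu_n)}(x)\), and \(\mathcal{E}_1\)-boundedness of a sequence of potentials gives no q.e.-pointwise control of the supremum over \(n\): the capacity Chebyshev bound \(\mathrm{Cap}_1(\{|\widetilde{w}_n|>\lambda\})\le C\lambda^{-2}\mathcal{E}_1(w_n)\) produces a q.e.\ finite supremum only when \(\sum_n\mathcal{E}_1(w_n)<\infty\), i.e.\ along suitably chosen subsequences, never along the full sequence. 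The same defect afflicts the terms \(u_n(x)^2\) and \(U_2\mu_{\langle u_n\rangle}(x)\) in your maximal estimate, and your treatment of the boundary term is additionally off: \(e^{-t}\widetilde{u_n}(X_t)\) is not a supermartingale, since \(u_n\) is a \emph{difference} of \(1\)-excessive functions. The paper's missing idea is to dualize onto the measure side: it tests everything against \(\nu\in\mathcal{S}_{00}\), where \(\mathbb{E}_\nu[(\widetilde{A^n}_\infty)^2]=2\int_E U_2\nu\,\widetilde{U_1\mu_n}\,d\mu_n\le 2\|U_2\nu\|_\infty\,\mathcal{E}_1(U_1\mu_n)\), uses \(\|U_1\nu\|_\infty\) for the martingale and drift terms and \cite[Lemma 5.1.1]{FOT} for the path-supremum of \(\widetilde{u_n}(X_t)\), obtains convergence in probability plus uniform integrability under \(\mathbb{P}_\nu\) for \emph{every} \(\nu\in\mathcal{S}_{00}\) along the full sequence, and only then passes to q.e.\ \(x\) via \cite[Corollary 2.3.11]{CF}.

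Second, your sub-subsequence argument does not survive the dependence of the exceptional set on the subsequence. To prove \(\mathbb{E}_x[\sup_{t\le T}|A^n_t-A_t|]\to 0\) at a fixed \(x\), you must, for \emph{every} subsequence, extract a further subsequence along which almost sure convergence and uniform integrability hold for that same \(x\); but each application of Theorem \ref{NTTUmain} to a different initial subsequence produces its own polar set, the subsequences form an uncountable family, and polar sets are stable only under countable unions. Your remark about ``the single nest and the uniform subsequence provided by Theorem \ref{NTTUmain}'' addresses only the one subsequence the theorem supplies for the full sequence, not the family of subsequences your argument quantifies over. A sanity check that the pointwise route cannot be repaired cheaply: if your maximal \(L^2\) estimate had a right-hand side vanishing q.e.\ along the full sequence, it would yield \(L^2(\mathbb{P}_x)\) convergence q.e.\ with no extra hypotheses — strictly stronger than Corollary \ref{submain2}, which the paper proves only under the additional assumption \(\sup_n\|U_1\mu_n\|_\infty<\infty\).
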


Our proof of Theorem \ref{submain1} is based on the following Fukushima decomposition of potentials.
\begin{lemma}[{\cite[Lemma 5.4.1]{FOT}}]\label{FukuDecomp1}
For \(\mu \in \mathcal{S}_0\) corresponding to \(A\in {\bf A}_c^+\) and \(\alpha >0\), there exists an MAF \(M^{[U_{\alpha}\mu]}\) such that, for \(t\geq 0\) and q.e. \(x\in E\),
\[\widetilde{U_{\alpha}\mu}(X_t)-\widetilde{U_{\alpha}\mu}(X_0)=M_t^{[U_{\alpha}\mu]}+\alpha \int_0^t \widetilde{U_{\alpha}\mu}(X_s)ds-A_t, \ \mathbb{P}_x \text{-a.s.},\]
and \(\alpha \int_0^t \widetilde{U_{\alpha}\mu}(X_s)ds-A_t\) is a continuous additive functional of zero energy. Here, \(\widetilde{U_{\alpha}\mu}\) is a quasi-continuous version of \(U_{\alpha}\mu\).
\end{lemma}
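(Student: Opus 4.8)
The plan is to deduce the decomposition from the general Fukushima decomposition (Theorem \ref{FukuDecomp}) applied to the potential \(u:=U_{\alpha}\mu\), and then to identify the zero-energy part explicitly by building an exponential martingale out of \(A\). First I would record that \(u\in\mathcal{F}\): since \(\mu\in\mathcal{S}_0\), the functional \(g\mapsto\int_E\widetilde{g}\,d\mu\) is \(\mathcal{E}_1\)-bounded, so the Riesz representation theorem produces \(u\in\mathcal{F}\) with \(\mathcal{E}_{\alpha}(u,g)=\int_E\widetilde{g}\,d\mu\) for \(g\in\mathcal{F}\cap C_c(E)\). Theorem \ref{FukuDecomp} then already gives an MAF \(M^{[u]}\) and a CAF of zero energy \(N^{[u]}\) with \(\widetilde{u}(X_t)-\widetilde{u}(X_0)=M^{[u]}_t+N^{[u]}_t\), so the whole content of the lemma is the identification \(N^{[u]}_t=\alpha\int_0^t\widetilde{u}(X_s)\,ds-A_t\).

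Next, setting \(f\equiv1\) in the cited fact that \(U^{\alpha}_Af\) is a quasi-continuous version of \(U_{\alpha}(f\mu)\) gives the potential identity \(\widetilde{u}(x)=\mathbb{E}_x[\int_0^{\infty}e^{-\alpha s}\,dA_s]\) for q.e.\ \(x\), from which the Markov property and the additivity of \(A\) make
\[Y_t:=\int_0^t e^{-\alpha s}\,dA_s+e^{-\alpha t}\widetilde{u}(X_t)=\mathbb{E}_x\Big[\int_0^{\infty}e^{-\alpha s}\,dA_s\,\Big|\,\mathcal{F}_t\Big]\]
a \(\mathbb{P}_x\)-martingale for q.e.\ \(x\). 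Writing \(\widetilde{u}(X_t)=e^{\alpha t}Y_t-e^{\alpha t}\int_0^te^{-\alpha s}\,dA_s\) and applying the product rule to each term (the factor \(e^{\alpha t}\) is of bounded variation, so no bracket term appears), the two \(dt\)-contributions combine through \(\alpha e^{\alpha t}(Y_t-\int_0^te^{-\alpha s}dA_s)=\alpha\widetilde{u}(X_t)\) to yield the pathwise identity
\[\widetilde{u}(X_t)-\widetilde{u}(X_0)=\int_0^te^{\alpha s}\,dY_s+\alpha\int_0^t\widetilde{u}(X_s)\,ds-A_t.\]
This exhibits the candidate zero-energy part \(N_t:=\alpha\int_0^t\widetilde{u}(X_s)\,ds-A_t\) and the martingale remainder \(\mathcal{M}_t:=\int_0^te^{\alpha s}\,dY_s\).

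I would then verify that \(N\) is a CAF of zero energy. Continuity, additivity and q.e.\ integrability are immediate, and for the energy both pieces contribute \(o(t)\) to \(\mathbb{E}_m[\,\cdot\,^2]\). By Cauchy--Schwarz and \(m\)-invariance, \(\mathbb{E}_m[(\int_0^t\widetilde{u}(X_s)\,ds)^2]\le t^2\|\widetilde{u}\|_{L^2(m)}^2\). For \(A\) I would use \(A_t^2=2\int_0^t(A_t-A_s)\,dA_s\), the Markov property, \(A_{t-s}\le A_t\), and the Revuz estimate \(\mathbb{E}_m[\int_0^t g(X_s)\,dA_s]\le t\int_E g\,d\mu\) to obtain
\[\mathbb{E}_m[A_t^2]\le 2t\int_E\mathbb{E}_x[A_t]\,d\mu(x),\]
whose right-hand side is \(o(t)\): indeed \(\mathbb{E}_x[A_t]\downarrow0\) q.e.\ as \(t\downarrow0\) and is dominated in \(L^1(\mu)\) by \(e\,\widetilde{U_1\mu}(x)\), with \(\int_E\widetilde{U_1\mu}\,d\mu=\mathcal{E}_1(U_1\mu)<\infty\), so dominated convergence applies. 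Minkowski's inequality then gives \(\mathbb{E}_m[N_t^2]=o(t)\), hence \(e(N)=0\).

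Finally I would identify \(N=N^{[u]}\) using the uniqueness in Theorem \ref{FukuDecomp}: it suffices to know that \(\mathcal{M}\) is itself an MAF, i.e.\ square integrable. For \(\mu\in\mathcal{S}_{00}\) this holds, since \(\|U_{\alpha}\mu\|_{\infty}<\infty\) forces \(\mathbb{E}_x[(\int_0^{\infty}e^{-\alpha s}dA_s)^2]\le 2\|\widetilde{U_{\alpha}\mu}\|_{\infty}\widetilde{u}(x)<\infty\), so \(Y\) is an \(L^2\)-martingale and \(\mathcal{M}\) is \(L^2\) on bounded intervals; then \(\widetilde{u}(X_t)-\widetilde{u}(X_0)=\mathcal{M}_t+N_t\) is a genuine Fukushima decomposition and uniqueness forces \(N_t=N^{[u]}_t\). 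The general case \(\mu\in\mathcal{S}_0\) follows by choosing \(\mu_k\in\mathcal{S}_{00}\) with \(\rho(\mu_k,\mu)\to0\), so that \(U_{\alpha}\mu_k\to U_{\alpha}\mu\) in \(\mathcal{E}_{\alpha}\) and, along a subsequence, \(A^k\to A\) locally uniformly (Theorem \ref{NTTUmain}), and passing to the limit in each term. The step I expect to be the main obstacle is the zero-energy estimate \(\mathbb{E}_m[A_t^2]=o(t)\): although \(A\) is merely increasing and carries strictly positive ``energy'' under \(\mathbb{P}_x\) for \(x\) in the support of \(\mu\), averaging against the symmetrizing measure \(m\) must suppress the quadratic growth to order \(o(t)\); establishing this cleanly and then securing the square integrability required to invoke uniqueness — which is exactly why the passage through \(\mathcal{S}_{00}\) enters — is the delicate part.
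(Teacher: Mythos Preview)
The paper does not prove this lemma; it is quoted directly from \cite[Lemma~5.4.1]{FOT} and used as a black box, so there is no in-paper argument to compare against. What you have supplied is an independent proof of the cited result.

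Your outline is correct. The exponential-martingale identity for \(Y_t\), the integration-by-parts that isolates \(N_t=\alpha\int_0^t\widetilde{u}(X_s)\,ds-A_t\), and the zero-energy computation \(\mathbb{E}_m[A_t^2]=o(t)\) are all right; the latter is exactly the estimate on \cite[p.~245]{FOT} which the present paper itself invokes at (\ref{eq:localU-1-2}). Where your argument diverges from the standard one in \cite{FOT} is the identification \(N=N^{[u]}\). You secure square-integrability of \(\mathcal{M}\) first for \(\mu\in\mathcal{S}_{00}\) and then pass to \(\mathcal{S}_0\) via Theorem~\ref{NTTUmain}; this works and is not circular (NTTU's proof does not use the Fukushima decomposition of potentials, as noted in the remark following Theorem~\ref{submain1}), but it is heavier than necessary and somewhat awkward logically, since you are invoking a later convergence theorem to establish a foundational decomposition. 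The route in \cite{FOT} avoids this: once \(e(N)=0\) is known, one identifies \(N=N^{[u]}\) directly through the characterization
\[
\lim_{t\downarrow 0}\frac{1}{t}\,\mathbb{E}_{hm}\bigl[N^{[u]}_t\bigr]=-\mathcal{E}(u,h)\qquad(h\in\mathcal{F}\text{ bounded}),
\]
since for your \(N\) the Revuz formula and the defining relation \(\mathcal{E}_\alpha(u,h)=\int_E\widetilde{h}\,d\mu\) give \(\lim_{t\downarrow0}\frac{1}{t}\mathbb{E}_{hm}[N_t]=\alpha\langle u,h\rangle_m-\int_E\widetilde{h}\,d\mu=-\mathcal{E}(u,h)\). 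This disposes of the \(\mathcal{S}_{00}\) step and the appeal to Theorem~\ref{NTTUmain} entirely.
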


\begin{proof}[Proof of Theorem \ref{submain1}]
By \cite[Corollary 2.3.11]{CF}, it is enough to show that \[\lim_{n\to \infty}\mathbb{E}_{\nu}\left[\sup_{0\leq t\leq T}|A_t^n-A_t|\right]=0\]
for any \(\nu \in \mathcal{S}_{00}.\) By the Fukushima decomposition (Lemma \ref{FukuDecomp1}), there exists a martingale additive functional \(M^{[U_1\mu_n]}\) such that,
\begin{equation}\label{eq:NTTU1}
\widetilde{U_1\mu_n}(X_t)-\widetilde{U_1\mu_n}(X_0)=M_t^{[U_1\mu_n]}+\int_0^t \widetilde{U_1\mu_n}(X_s)ds-A_t^n,
\end{equation}
\(\mathbb{P}_x\)-almost surely for q.e. \(x\in E\). So, at first, we prove that \(A^n\) converges to \(A\) in measure \(\mathbb{P}_{\nu}\) for any \(\nu \in \mathcal{S}_{00}\) with the local uniform topology by checking convergences of each term in \((\ref{eq:NTTU1})\) except the term of \(A^n\). Since \(\mathbb{P}_{\nu}\) is a finite measure for \(\nu \in \mathcal{S}_{00}\), for convenience, we use the term of the convergence in probability \(\mathbb{P}_{\nu}\) to mean the convergence in measure \(\mathbb{P}_{\nu}\) for \(\nu \in \mathcal{S}_{00}\).

For \(\nu \in \mathcal{S}_{00}\) and \(T>0\), by Doob's inequality and \cite[Lemma 4.2.4]{CF}, we have
\begin{eqnarray*}
\mathbb{E}_{\nu}\left[\sup_{0\leq t\leq T}|M_t^{[U_1\mu_n]}-M_t^{[U_1\mu]}|^2\right] &\leq & 4  \mathbb{E}_{\nu}\left[|M_T^{[U_1\mu_n]}-M_T^{[U_1\mu]}|^2\right]\\
&= & 4  \mathbb{E}_{\nu}\left[\langle M^{[U_1\mu_n]}-M^{[U_1\mu]} \rangle_T\right]\\
&\leq & 8 (1+T)||U_1\nu ||_{\infty} \mathcal{E}(U_1\mu_n - U_1\mu)\\
&\leq & 8 (1+T)||U_1\nu ||_{\infty} \mathcal{E}_1(U_1\mu_n - U_1\mu).
\end{eqnarray*}
So \(\sup_{0\leq t\leq T}|M_t^{[U_1\mu_n]}-M_t^{[U_1\mu]}|\) converges to \(0\) in \(L^2(\mathbb{P}_{\nu})\) and so in probability \(\mathbb{P}_{\nu}\) as \(n\) tends to infinity.

For \(\nu \in \mathcal{S}_{00}\) and \(T>0\), we have
\begin{eqnarray*}
\lefteqn{\mathbb{E}_{\nu}\left[\sup_{0\leq t\leq T}\left|\int_0^t \widetilde{U_1\mu_n}(X_s)ds-\int_0^t \widetilde{U_1\mu}(X_s)ds\right|^2\right]
\leq  T \int_0^T \mathbb{E}_{\nu}\left[\left| U_1\mu_n(X_s)- U_1\mu(X_s)\right|^2 \right]ds}\\
&= & T \lim_{k\to \infty} \int_0^T \mathbb{E}_{\nu}\left[\left| U_1\mu_n(X_s)- U_1\mu(X_s)\right|^2 {\bf 1}_{\{|U_1\mu_n-U_1\mu|\leq k\}} \right]ds\\
&= & T \lim_{k\to \infty}\int_0^T\int_E P_s \left(\left| U_1\mu_n - U_1\mu \right|^2{\bf 1}_{\{|U_1\mu_n-U_1\mu|\leq k\}}(X_s)\right)(x)d\nu(x)ds\\
&\leq & T e^T \lim_{k\to \infty}\int_E R_1 \left(\left| U_1\mu_n - U_1\mu \right|^2{\bf 1}_{\{|U_1\mu_n-U_1\mu|\leq k\}}\right)(x) d\nu(x)\\
&= & T e^T\lim_{k\to \infty}\mathcal{E}_1\left(U_1 \nu, R_1 \left(\left| U_1\mu_n - U_1\mu \right|^2{\bf 1}_{\{|U_1\mu_n-U_1\mu|\leq k\}}\right)\right)\\
&= &  T e^T \lim_{k\to \infty}\int_E U_1\nu(x) \left| U_1\mu_n - U_1\mu \right|^2{\bf 1}_{\{|U_1\mu_n-U_1\mu|\leq k\}}(x) dm(x)\\
&= & T e^T \int_E U_1\nu(x) \left| U_1\mu_n - U_1\mu \right|^2(x) dm(x)\\
&\leq &  T e^T ||U_1\nu ||_{\infty} \int_E \left| U_1\mu_n - U_1\mu \right|^2(x) dm(x)\\
&\leq &  T e^T ||U_1\nu ||_{\infty} \mathcal{E}_1\left(U_1\mu_n - U_1\mu\right).
\end{eqnarray*}
So \(\sup_{0\leq t\leq T}\left|\int_0^t \widetilde{U_1\mu_n}(X_s)ds-\int_0^t \widetilde{U_1\mu}(X_s)ds\right|\) converges to \(0\) in \(L^2(\mathbb{P}_{\nu})\) and so in probability \(\mathbb{P}_{\nu}\) as \(n\) tends to infinity.

For any \(\nu \in \mathcal{S}_{00}\) and \(T>0\), we have
\begin{eqnarray*}
\mathbb{E}_{\nu}\left[\sup_{0\leq t\leq T}\left|\widetilde{U_1\mu_n}(X_0)- \widetilde{U_1\mu}(X_0)\right|\right]&=&\mathbb{E}_{\nu}\left[\left|\widetilde{U_1\mu_n}(X_0)-\widetilde{U_1\mu}(X_0)\right|\right]\\
&=& \mathcal{E}_1 (U_1\nu, |U_1\mu_n-U_1\mu|)\\
&\leq & \sqrt{\mathcal{E}_1(U_1 \nu)} \sqrt{\mathcal{E}_1(U_1\mu_n - U_1\mu)}.
\end{eqnarray*}

So \(\sup_{0\leq t\leq T}\left|\widetilde{U_1\mu_n}(X_0)- \widetilde{U_1\mu}(X_0)\right|\) converges to \(0\) in \(L^1(\mathbb{P}_{\nu})\) and so in probability \(\mathbb{P}_{\nu}\) as \(n\) tends to infinity.

By \cite[Lemma 5.1.1]{FOT}, for any \(\nu \in \mathcal{S}_{00}\) and \( T, \delta>0\), we have
\[\mathbb{P}_{\nu}\left(\sup_{0\leq t\leq T}\left|\widetilde{U_1\mu_n}(X_t) - \widetilde{U_1\mu}(X_t) \right| \geq \delta \right)\leq \frac{e^T}{\delta} \sqrt{\mathcal{E}_1(U_1 \nu)} \sqrt{\mathcal{E}_1(U_1\mu_n - U_1\mu)}.\]
So \(\sup_{0\leq t\leq T}\left|\widetilde{U_1\mu_n}(X_t) - \widetilde{U_1\mu}(X_t) \right|\) converges to \(0\) in probability \(\mathbb{P_{\nu}}\).

Hence, by (\ref{eq:NTTU1}), \(\sup_{0\leq t\leq T}\left|A_t^n-A_t \right|\) converges to \(0\) in probability \(\mathbb{P_{\nu}}\) for any \(\nu\in \mathcal{S}_{00}\). We next prove the uniform integrability of a sequence \(\{\sup_{0\leq t\leq T}|A^n_t-A_t|\}_n\) under \(\mathbb{P}_{\nu}\) for any \(\nu \in \mathcal{S}_{00}.\) For any \(\nu \in \mathcal{S}_{00}\), we have
\begin{eqnarray*}
\mathbb{E}_{\nu}\left[\left( \sup_{0\leq t\leq T}|A^n_t-A_t|\right)^2\right] &\leq & 2\mathbb{E}_{\nu}\left[\sup_{0\leq t\leq T}\left((A^n_t)^2+(A_t)^2\right)\right]\\
&=& 2\mathbb{E}_{\nu}\left[ \left((A^n_T)^2+(A_T)^2\right)\right].\end{eqnarray*}

By \cite[Excercise 4.1.7] {CF}, it holds that \(\mathbb{E}_{\nu}[(\widetilde{A^n_{\infty}})^2]=2\int_E U^2_{A^n}U^1_{A^n}1 d\nu \) where \(\widetilde{A_T^n}:=\int_0^{T} e^{-t}dA_t^n\). Since \(U^{\alpha}_{A^n}f=\widetilde{U_{\alpha}(f\mu_n)}\) for any bounded non-negative Borel measurable function \(f\) by \cite[Lemma 5.1.3]{FOT}, and an inequality \(A_T^n \leq e^T \widetilde{A_T^n} \leq e^T \widetilde{A_{\infty}^n},\) we have
\begin{eqnarray*}
2\mathbb{E}_{\nu}\left[ \left((A^n_T)^2+(A_T)^2\right)\right]&\leq &4e^{2T}\int_E U^2_{A^n}U^1_{A^n}1 d\nu + 4e^{2T}\int_E U^2_{A}U^1_{A}1 d\nu \\
&=& 4e^{2T} \lim_{k\to \infty} \left\{\int_E U^2_{A^n}(U^1_{A^n}1 \wedge k) d\nu  +\int_E U^2_{A}(U^1_{A}1 \wedge k) d\nu    \right\}\\
&=& 4e^{2T} \lim_{k\to \infty} \left\{\mathcal{E}_2(U_2\nu, U_2((\widetilde{U_1 \mu _n}\wedge k)\mu _n)) +U_2((\widetilde{U_1 \mu}\wedge k)\mu)) \right\}\\
&=& 4e^{2T} \lim_{k\to \infty} \left\{\int_E U_2\nu (\widetilde{U_1 \mu _n}\wedge k)d\mu _n +\int_E U_2\nu (\widetilde{U_1 \mu}\wedge k) d\mu) \right\}\\
&=& 4e^{2T}  \left\{\int_E U_2\nu \widetilde{U_1 \mu _n}d\mu _n +\int_E U_2\nu \widetilde{U_1 \mu} d\mu \right\}\\
&\leq & 4e^{2T} ||U_2\nu||_{\infty} (\mathcal{E}_1(U_1\mu _n)+\mathcal{E}_1(U_1\mu )).
\end{eqnarray*} 
By the convergence of \(U_1\mu _n\) to \(U_1\mu \) in \(\mathcal{E}_1\) and the above, \(\{\sup_{0\leq t\leq T}|A^n_t-A_t|\}_n\) under \(\mathbb{P}_{\nu}\) is uniformly integrable. Thus \(\sup_{0\leq t\leq T}\left|A_t^n-A_t \right|\) converges to \(0\) in  \(L^1(\mathbb{P_{\nu}})\) for any \(\nu\in \mathcal{S}_{00}\) and so does in \(L^1(\mathbb{P}_x)\) for q.e. \(x\in E.\)
\end{proof}
We remark that we cannot replace the convergence of \(\mu_n\) to \(\mu\) in \(\rho\) with the vague convergence in Theorem \ref{submain1}. See Example \ref{ex1} for details.

\begin{remark}
In the proof of Theorem \ref{submain1}, we proved that, for any \(\nu \in \mathcal{S}_{00}\) and \(T, \delta>0\), there exists positive constant \(C_{\nu}\) depending on \(\nu\),
\[\mathbb{P}_{\nu}\left(\sup_{0\leq t\leq T}\left|A^n_t - A_t \right| \geq \delta \right)\leq C_{\nu} \sqrt{\mathcal{E}_1(U_1\mu_n - U_1\mu)}.\]
So, by using Borel-Cantelli's lemma, we can take a subsequence \(\{n_k\}_k\) independent of \(\nu\) such that
\[\mathbb{P}_x\left(\lim_{n_k\to \infty}\sup_{0\leq t\leq T}|A_t^{n_k}-A_t|=0 \text{\ for\ any\ }T \right)=1\]
for any \(T>0\) and q.e. \(x\in E\), so Theorem \ref{NTTUmain} (\cite[Theorem 4.1]{NTTU}) follows.
\end{remark}

\begin{remark}
Nishimori, Tomisaki, Tsuchida and Uemura proved Theorem \ref{NTTUmain} (\cite[Theorem 4.1]{NTTU}) by constructing an approximating sequence of PCAFs \(\{A^{n,k}\}_k\) whose corresponding Revuz measures are absolutely continuous with respect to \(m\) and mainly using properties of the resolvent and the diagonal argument. Our proof is based on the Fukushima decomposition for potentials \(U_1\mu_n\), so this is a different approach from theirs.
\end{remark}

\begin{remark}
In the proof of Theorem \ref{submain1}, we proved that \(\{\mathbb{E}_{\nu}[\sup_{t\leq T}|A_t^n-A_t|^2]\}_n\) is bounded for any \(\nu \in \mathcal{S}_{00}\).
By the H\"{o}lder inequality, the conclusion of Theorem \ref{submain1} can be strengthened to the convergence of \(A^n\) to \(A\) in \(L^p(\mathbb{P}_x)\) with the local uniform topology for q.e. \(x\in E\) and any \(p<2\).
\end{remark}

In a special case, we can show the \(L^2(\mathbb{P}_x)\) convergence of PCAFs as stated in the following corollary.

\begin{corollary}\label{submain2}
For \(\mu_n, \mu \in \mathcal{S}_{00}\), let \(A^n, A \in {\bf A}_c^+\) be the corresponding PCAFs, respectively. If \(\rho(\mu_n,\mu)\to 0\) as \(n\to \infty\) and \(\sup_{n}||U_1\mu_n||_{\infty}<\infty\), then \(A^n\) converges to \(A\) in \(L^2(\mathbb{P}_x)\) for q.e. \(x\in E\) with the local uniform topology, that is, for any \(T>0\) and q.e. \(x\in E\), \[\lim_{n\to \infty}\mathbb{E}_x\left[\sup_{0\leq t\leq T}|A_t^n-A_t|^2\right]=0.\]
\end{corollary}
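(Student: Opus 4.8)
The plan is to upgrade the convergence in probability already established in the proof of Theorem \ref{submain1} to $L^2$-convergence, by showing that the extra hypothesis $\sup_n\|U_1\mu_n\|_\infty<\infty$ forces uniform integrability of the squares $\{(\sup_{0\le t\le T}|A^n_t-A_t|)^2\}_n$. Exactly as in the proof of Theorem \ref{submain1}, by \cite[Corollary 2.3.11]{CF} it suffices to prove, for every $\nu\in\mathcal{S}_{00}$,
\[\lim_{n\to\infty}\mathbb{E}_\nu\left[\sup_{0\le t\le T}|A^n_t-A_t|^2\right]=0,\]
and from that same proof we already know that $\sup_{0\le t\le T}|A^n_t-A_t|$ converges to $0$ in probability $\mathbb{P}_\nu$. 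Hence it remains only to produce a uniform bound on a moment of order strictly larger than $2$; convergence in probability together with such a bound yields the desired $L^2(\mathbb{P}_\nu)$-convergence, after which the localization via \cite[Corollary 2.3.11]{CF} gives the q.e. statement.

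The heart of the argument is a uniform fourth-moment estimate, and this is the step I expect to require the most care. I would first record the higher-order analogue of \cite[Exercise 4.1.7]{CF}, namely
\[\mathbb{E}_x\left[(\widetilde{A_\infty})^k\right]=k!\,U^k_A U^{k-1}_A\cdots U^1_A 1(x)\quad\text{for q.e. }x,\]
proved by the same iterated application of the Markov property to the simplex representation $(\widetilde{A_\infty})^k=k!\int_{0\le s_1\le\cdots\le s_k}e^{-s_1}\cdots e^{-s_k}\,dA_{s_1}\cdots dA_{s_k}$ that yields the case $k=2$. Since $U^\alpha_A g=\widetilde{U_\alpha(g\mu)}$ by \cite[Lemma 5.1.3]{FOT}, and since the resolvent equation gives $U_\alpha\mu\le U_1\mu$, hence $\|U_\alpha\mu\|_\infty\le\|U_1\mu\|_\infty$, for $\alpha\ge1$, one has $\|U^\alpha_A g\|_\infty\le\|g\|_\infty\|U_1\mu\|_\infty$ for every $\alpha\ge1$ and every bounded nonnegative $g$. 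Iterating this from the inside out gives $U^k_A\cdots U^1_A 1\le\|U_1\mu\|_\infty^k$, and therefore
\[\mathbb{E}_x\left[(\widetilde{A_\infty})^k\right]\le k!\,\|U_1\mu\|_\infty^k\quad\text{for q.e. }x.\]

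Applying this with $k=4$ to both $A^n$ and $A$, and using $A^n_T\le e^T\widetilde{A^n_\infty}$, $A_T\le e^T\widetilde{A_\infty}$ together with $\nu(E)<\infty$, I obtain
\[\sup_n\mathbb{E}_\nu\left[(A^n_T)^4\right]\le e^{4T}\,4!\,\Big(\sup_n\|U_1\mu_n\|_\infty\Big)^4\nu(E)<\infty,\]
and the same bound for $A$. This is precisely, and uniquely, where the hypothesis $\sup_n\|U_1\mu_n\|_\infty<\infty$ enters; note that $\rho$-convergence alone controls only $\mathcal{E}_1(U_1\mu_n-U_1\mu)$ and not the sup-norms, so without this hypothesis the fourth moments could blow up. Since $\sup_{0\le t\le T}|A^n_t-A_t|\le A^n_T+A_T$, the sequence $\{(\sup_{0\le t\le T}|A^n_t-A_t|)^2\}_n$ is bounded in $L^2(\mathbb{P}_\nu)$ and hence uniformly integrable. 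Combining uniform integrability with the convergence in probability established in the proof of Theorem \ref{submain1} yields $\mathbb{E}_\nu[\sup_{0\le t\le T}|A^n_t-A_t|^2]\to0$ for every $\nu\in\mathcal{S}_{00}$, and the localization reduces this to $L^2(\mathbb{P}_x)$-convergence for q.e. $x$. The main obstacle is thus the verification of the $k$-th moment formula and its accompanying iterated sup-norm estimate; once these are in place the conclusion follows by a standard ``convergence in probability plus uniform integrability'' argument reusing the machinery of Theorem \ref{submain1}.
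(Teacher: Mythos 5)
Your proposal is correct, but it takes a genuinely different route from the paper's own proof. The paper proves Corollary \ref{submain2} by going back to the Fukushima decomposition (\ref{eq:NTTU1}): in the proof of Theorem \ref{submain1} the martingale part \(M^{[U_1\mu_n]}\) and the drift part \(\int_0^{\cdot}\widetilde{U_1\mu_n}(X_s)ds\) were already shown to converge in \(L^2(\mathbb{P}_{\nu})\) with the local uniform topology, so the only term needing an upgrade is \(\widetilde{U_1\mu_n}(X_t)\), and there the hypothesis \(C:=\sup_n\|U_1\mu_n\|_{\infty}<\infty\) gives the one-line boosting \(|f|^2\le 2C|f|\), turning the already-established convergence of that bounded term into \(L^2\)-convergence; summing the decomposition finishes. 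You instead work directly at the level of the PCAFs: you prove the \(k\)-th moment formula \(\mathbb{E}_x[(\widetilde{A_{\infty}})^k]=k!\,U^k_AU^{k-1}_A\cdots U^1_A1\), the higher-order analogue of \cite[Exercise 4.1.7]{CF}, via the same simplex/Markov-property recursion \(\mathbb{E}_x[(\widetilde{A_{\infty}})^k]=k\,U^k_A\bigl(\mathbb{E}_{\cdot}[(\widetilde{A_{\infty}})^{k-1}]\bigr)\) that yields the case \(k=2\); combined with \(\|U_{\alpha}\mu\|_{\infty}\le\|U_1\mu\|_{\infty}\) for \(\alpha\ge1\) this gives the Khas'minskii-type bound \(\mathbb{E}_x[(\widetilde{A_{\infty}})^k]\le k!\,\|U_1\mu\|_{\infty}^k\), hence (using \(A_T\le e^T\widetilde{A_{\infty}}\), \(\sup_{t\le T}|A^n_t-A_t|\le A^n_T+A_T\), and \(\nu(E)<\infty\), \(\|U_1\mu\|_{\infty}<\infty\) for \(\mu,\nu\in\mathcal{S}_{00}\)) a uniform fourth-moment bound, which together with the in-probability convergence from the proof of Theorem \ref{submain1} yields uniform integrability of the squares and the \(L^2(\mathbb{P}_{\nu})\) conclusion; the localization to q.e.\ \(x\) via \cite[Corollary 2.3.11]{CF} is the same step the paper uses. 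All of your steps check out, and the hypothesis \(\sup_n\|U_1\mu_n\|_{\infty}<\infty\) enters exactly where you say it does. Comparing the two: the paper's argument is shorter and reuses computations already made, while yours pays for an extra (standard, but not cited in the paper) moment lemma and in exchange is more robust and gives strictly more --- the same iteration bounds all moments uniformly by \(k!\,C^k\), so your argument in fact upgrades the conclusion to convergence in \(L^p(\mathbb{P}_x)\) with the local uniform topology for every \(p<\infty\), strengthening the remark following Theorem \ref{submain1}.
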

\begin{proof}
By Theorem \ref{submain1}, \(A^n\) converges to \(A\) with the local uniform topology in \(L^1(\mathbb{P}_x)\) for q.e. \(x\in E\). In the proof of Theorem \ref{submain1}, we proved convergences of \(M^{[U_1\mu_n]}\) and \(\int_0^{\cdot} \widetilde{U_1\mu_n}(X_s)ds\) in \(L^2(\mathbb{P}_{\nu})\) with the local uniform topology, respectively, so it is enough to show that \(\widetilde{U_1\mu_n}(X)\) converges in \(L^2(\mathbb{P}_{\nu})\) with the local uniform topology. By the uniform boundedness of \(||U_1\mu_n||_{\infty}\), we set \(C:=\sup_n ||U_1\mu_n||_{\infty}\), so we have
\[\mathbb{E}_{\nu}\left[\sup_{0\leq t\leq T}|\widetilde{U_1\mu_n}(X_t)-\widetilde{U_1\mu}(X_t)|^2 \right]\leq 2C \mathbb{E}_{\nu}\left[\sup_{0\leq t\leq T}|\widetilde{U_1\mu_n}(X_t)-\widetilde{U_1\mu}(X_t)| \right]\]
for any \(T\geq 0\) and q.e. \(x\in E\). So the proof is completed.
\end{proof}

\section{Bicontinuity of the Revuz map restricted to the space of smooth measures of finite energy integrals}\label{mainsec}
In this section, we prove Theorem \ref{main2}, that is, the Revuz map from \(\mathcal{S}_0\) to \({\bf A}_c^+\) is bicontinuous. We continue to use the metric \(\rho\) on \(\mathcal{S}_0\) described in the previous section.
We prove Theorem \ref{main2} by capturing the energy of PCAFs in Proposition \ref{main2-2}. Let \((\mathcal{E}, \mathcal{F})\) be a regular Dirichlet form on \(L^2(E;m)\) associated with an \(m\)-symmetric Hunt process \(X\), and \(\kappa\) be the killing measure of \(X\).

We set \(\varphi_{\alpha}(x):=\mathbb{E}_x[e^{-\alpha \zeta}{\bf 1}_{\{\partial\}}(X_{\zeta -})]\) and define
 \[\int f d\nu_0 := \varlimsup_{t\searrow 0}\frac{1}{t}\int f(x) \mathbb{E}_x[e^{-2 \zeta} {\bf 1}_{\{\partial \}}(X_{\zeta -}){\bf 1}_{\{\zeta \leq t\}}]dm(x) = \varlimsup_{t\searrow 0} \frac{1}{t}\int f(x) (\varphi_2(x)-e^{-2 t}P_t\varphi_{2}(x))dm(x).\] 
 for a positive Borel measurable function \(f\). We remark that, for a \(2\)-excessive function \(f\),
 \[\int f d\nu_0 = \lim_{t\searrow 0} \frac{1}{t}\int f(x) (\varphi_{2}(x)-e^{-2 t}P_t\varphi_{2}(x))dm(x).\]
is an increasing limit, and this is called the energy functional of \(f\) and \(\varphi_2\) for \(\{e^{-2t}P_t\}_t\). See \cite[\S 5.4]{CF} and \cite[\S 3]{G} for details. In our cases, we need to consider only the limit of the energy functional with \(\varphi_2\) is either zero or non-zero, so we extend the energy functional with \(\varphi_2\) for positive Borel functions. We remark that \(\nu_0\) is not a measure in general, but the following holds. Recall that \(\widetilde{A_t}:=\int_0^t e^{-\alpha s}dA_s.\)

\begin{lemma}\label{nu_0Lemma}
{\rm (i) (monotonicity)}  For positive Borel functions \(f, g\) satisfying \(f\leq g\) \(m\)-a.e., \[\int f d\nu_0 \leq \int g d\nu_0.\]
{\rm (ii) (subadditivity)} For positive Borel functions \(f, g\), \[\int (f+g) d\nu_0 \leq \int f d\nu_0 + \int g d\nu_0.\]
{\rm (iii)} For \(\alpha \geq 0\) and an \(\alpha\)-excessive function \(f\), 
\begin{equation}
\int fd\nu_0 =  \lim_{t\searrow 0}\frac{1}{t}\int f(x) (\varphi_{\alpha}(x)-e^{-\alpha t}P_t\varphi_{\alpha}(x))dm(x). \label{eq:excessive_indep}
\end{equation}
{\rm (iv)} For \(\mu, \nu \in \mathcal{S}_0\), let \(A, B \in {\bf A}_c^+\) be the corresponding PCAFs, respectively. Then \(\mathbb{E}_{\nu_0}[(\widetilde{A_{T}}-\widetilde{B_{T}})^2]\) and  \(\mathbb{E}_{\nu_0}[\sup_{t\leq T}|A_{t}-B_t|^2]\) are finite for any \(T\geq 0\). 
\end{lemma}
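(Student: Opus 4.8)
My plan is to handle the four parts in order, reducing (iv) to (iii) and (iii) to the known energy-functional theory. Throughout I would use the key identity, valid for every $\beta\ge 0$,
\[
\varphi_\beta(x)-e^{-\beta t}P_t\varphi_\beta(x)=\mathbb{E}_x\bigl[e^{-\beta\zeta}{\bf 1}_{\{\partial\}}(X_{\zeta-}){\bf 1}_{\{\zeta\le t\}}\bigr]\ge 0,
\]
which comes from the strong Markov property (so that $e^{-\beta t}P_t\varphi_\beta(x)=\mathbb{E}_x[e^{-\beta\zeta}{\bf 1}_{\{\partial\}}(X_{\zeta-}){\bf 1}_{\{\zeta>t\}}]$) and shows in passing that each $\varphi_\beta$ is $\beta$-excessive. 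Granting this, (i) and (ii) are immediate: writing $\rho_t:=\varphi_2-e^{-2t}P_t\varphi_2\ge 0$, the inequality $f\le g$ $m$-a.e.\ gives $\int f\rho_t\,dm\le\int g\rho_t\,dm$ for each $t$, and passing to $\varlimsup_{t\searrow 0}t^{-1}(\cdot)$ proves monotonicity, while $\int(f+g)\rho_t\,dm=\int f\rho_t\,dm+\int g\rho_t\,dm$ together with subadditivity of $\varlimsup$ proves (ii).

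For (iii), fix $\alpha\ge 0$ and an $\alpha$-excessive $f$. Since $f$ and $\varphi_\alpha$ are \emph{both} $\alpha$-excessive, the pairing $t^{-1}\langle f,\varphi_\alpha-e^{-\alpha t}P_t\varphi_\alpha\rangle_m$ is the energy functional of $f$ and $\varphi_\alpha$ for $\{e^{-\alpha t}P_t\}_t$, hence increases as $t\searrow 0$ to a limit $J_\alpha(f)\in[0,\infty]$ (the monotonicity quoted in the remark preceding the lemma, \cite[\S 5.4]{CF}). To identify $J_\alpha(f)$ with $\int f\,d\nu_0$, I would compare $\varphi_\alpha$ with $\varphi_2$ directly: on $\{\zeta\le t\}$ one has $e^{-\alpha\zeta}=e^{-2\zeta}e^{(2-\alpha)\zeta}$ with $e^{(2-\alpha)\zeta}\in[e^{-|2-\alpha|t},e^{|2-\alpha|t}]$, so by the identity above,
\[
e^{-|2-\alpha|t}\bigl(\varphi_2-e^{-2t}P_t\varphi_2\bigr)\le\varphi_\alpha-e^{-\alpha t}P_t\varphi_\alpha\le e^{|2-\alpha|t}\bigl(\varphi_2-e^{-2t}P_t\varphi_2\bigr)
\]
pointwise. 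Integrating against $f\,dm$, dividing by $t$, and letting $t\searrow 0$ (the prefactors tend to $1$) shows the two families share the same $\varlimsup$; since $t^{-1}\langle f,\varphi_\alpha-e^{-\alpha t}P_t\varphi_\alpha\rangle_m$ already converges, $J_\alpha(f)=\varlimsup_{t\searrow 0}t^{-1}\langle f,\varphi_2-e^{-2t}P_t\varphi_2\rangle_m=\int f\,d\nu_0$, the two-sided bound disposing of the finite and infinite cases at once.

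For (iv), I would first reduce to $\mathbb{E}_{\nu_0}[(\widetilde{A_\infty})^2]<\infty$: from $\widetilde{A_T}\le\widetilde{A_\infty}$ and $\sup_{t\le T}A_t=A_T\le e^{\alpha T}\widetilde{A_\infty}$ (and likewise for $B$) one gets, pointwise in $x$, $\mathbb{E}_x[(\widetilde{A_T}-\widetilde{B_T})^2]\le 2\mathbb{E}_x[(\widetilde{A_\infty})^2]+2\mathbb{E}_x[(\widetilde{B_\infty})^2]$ and $\mathbb{E}_x[\sup_{t\le T}|A_t-B_t|^2]\le 2e^{2\alpha T}(\mathbb{E}_x[(\widetilde{A_\infty})^2]+\mathbb{E}_x[(\widetilde{B_\infty})^2])$, so (i) and (ii) finish the reduction. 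Next, by \cite[Exercise 4.1.7]{CF} and \cite[Lemma 5.1.3]{FOT}, $\mathbb{E}_x[(\widetilde{A_\infty})^2]=2U^{2\alpha}_A\widetilde{U_\alpha\mu}(x)=2\widetilde{U_{2\alpha}(\widetilde{U_\alpha\mu}\,\mu)}(x)$, which is a $2\alpha$-potential, hence $2\alpha$-excessive. Applying (iii) with parameter $2\alpha$ and using $\widetilde{U_{2\alpha}(\widetilde{U_\alpha\mu}\mu)}-e^{-2\alpha t}P_t\widetilde{U_{2\alpha}(\widetilde{U_\alpha\mu}\mu)}=\mathbb{E}_\cdot[\int_0^t e^{-2\alpha s}\widetilde{U_\alpha\mu}(X_s)\,dA_s]$, the Revuz formula with initial measure $\varphi_{2\alpha}m$ gives
\[
\mathbb{E}_{\nu_0}[(\widetilde{A_\infty})^2]=2\int_E\varphi_{2\alpha}\,\widetilde{U_\alpha\mu}\,d\mu\le 2\int_E\widetilde{U_\alpha\mu}\,d\mu=2\,\mathcal{E}_\alpha(U_\alpha\mu,U_\alpha\mu)<\infty,
\]
using $\varphi_{2\alpha}\le 1$ and $U_\alpha\mu\in\mathcal{F}$.

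The main obstacle is concentrated in (iii) and its use in (iv). In (iii) one must show simultaneously that the defining $\varlimsup$ is a genuine limit and that it is independent of the order $\beta$ of the escape function $\varphi_\beta$; this cannot be done by working with $\varphi_2$ throughout, since for $\alpha>2$ an $\alpha$-excessive $f$ need not be $2$-excessive and $f-e^{-2t}P_t f$ may change sign, destroying the monotonicity. The remedy is to run the monotone energy functional in the matching order $\alpha$ and only afterwards compare $\varphi_\alpha$ and $\varphi_2$ by the elementary exponential estimate. The related delicate point in (iv) is that $\varphi_{2\alpha}\notin\mathcal{F}$ in general, so the limit $t^{-1}\langle\varphi_{2\alpha},\,\cdot\,\rangle_m$ cannot be read off from $\mathcal{E}_{2\alpha}$; evaluating it through the potential/Revuz duality is precisely what collapses the a priori $\nu_0$-integral onto the finite energy $\mathcal{E}_\alpha(U_\alpha\mu)$, and this is where the hypothesis $\mu\in\mathcal{S}_0$ enters.
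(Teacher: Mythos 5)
Your proposal is correct, and in parts (iii)--(iv) it takes a genuinely different route from the paper. For (iii) the paper argues analytically: it invokes the resolvent equation \(\varphi_{\alpha}-\varphi_2=(2-\alpha)R_2\varphi_{\alpha}\), decomposes the difference of the two approximating quotients into \(\frac{\alpha-2}{t}\int f(1-e^{-\alpha t}P_t)R_2\varphi_{\alpha}\,dm+\frac{1}{t}\int f(e^{-\alpha t}-e^{-2t})P_t\varphi_{\alpha}\,dm\), and passes to the limit superior; you instead use the probabilistic representation \(\varphi_{\beta}-e^{-\beta t}P_t\varphi_{\beta}=\mathbb{E}_{\cdot}[e^{-\beta\zeta}{\bf 1}_{\{\partial\}}(X_{\zeta-}){\bf 1}_{\{\zeta\le t\}}]\) (which the paper itself records when defining \(\nu_0\)) together with the pointwise sandwich \(e^{-|2-\alpha|t}\le e^{(2-\alpha)\zeta}\le e^{|2-\alpha|t}\) on \(\{\zeta\le t\}\). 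Your version is arguably cleaner, since the two-sided bound treats the case \(\int f\,d\nu_0=\infty\) transparently, whereas the paper's identity manipulates differences of possibly infinite integrals term by term. For (iv) both arguments land on the same evaluation \(\mathbb{E}_{\nu_0}[(\widetilde{A_{\infty}})^2]=2\int\varphi_{2\alpha}\,\widetilde{U_{\alpha}\mu}\,d\mu\le2\,\mathcal{E}_{\alpha}(U_{\alpha}\mu)\), but the paper gets there by truncating \(\mu\) along a nest \(\{F_k\}\) with \(U_{\alpha}({\bf 1}_{F_k}\mu)\) bounded, exchanging the monotone limits in \(s\) and \(k\), and quoting \cite[Theorem 5.4.3(iv)]{CF}, while you apply your part (iii) to the \(2\alpha\)-excessive function \(2U_A^{2\alpha}U_A^{\alpha}1\) and evaluate the limit by \(m\)-symmetry plus the Revuz formula under the initial measure \(\varphi_{2\alpha}m\). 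One caution: your symmetry step \(\langle h,\varphi_{2\alpha}-e^{-2\alpha t}P_t\varphi_{2\alpha}\rangle_m=\langle\varphi_{2\alpha},h-e^{-2\alpha t}P_th\rangle_m\) and the identification \(U_A^{2\alpha}U_A^{\alpha}1=\widetilde{U_{2\alpha}(\widetilde{U_{\alpha}\mu}\,\mu)}\) via \cite[Lemma 5.1.3]{FOT} both involve a priori unbounded or infinite quantities, so strictly you should insert a monotone truncation (e.g.\ \(\widetilde{U_{\alpha}\mu}\wedge k\), or the paper's nest) before passing to the limit; this is routine and does not affect the validity of your approach. Parts (i)--(ii) coincide with the paper, which declares them obvious.
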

\begin{proof}
(i) and (ii) are obvious. We prove (iii). We set 
\[L^{(\alpha)}(f, g):= \lim_{t\searrow 0}\frac{1}{t}\int f(x) (g(x)-e^{-\alpha t}P_tg(x))dm(x)\]
for an \(\alpha\)-excessive function \(f\) and a purely \(\alpha\)-excessive function \(g\). By \cite[\S 5.4]{CF}, the right hand side of (\ref{eq:excessive_indep}) is well-defined. By the resolvent equation, that is, \(\varphi_{\alpha}-\varphi_2=(2-\alpha)R_2\varphi_{\alpha}\), we have
\begin{eqnarray*}
\lefteqn{\frac{1}{t}\int_E f\ (1-e^{-2t}P_t)\varphi_2\ dm - \frac{1}{t}\int_E f\ (1-e^{-\alpha t}P_t)\varphi_{\alpha}\ dm}\\
&=&\frac{\alpha -2}{t} \int_E f\ (1-e^{-\alpha t}P_t)R_2 \varphi_{\alpha}\ dm + \frac{1}{t} \int_E f\ (e^{-\alpha t}-e^{-2t})P_t \varphi_{\alpha}\ dm
\end{eqnarray*}
By taking the limit sup as \(t\) tends to \(0\), (iii) is proved.

We prove (iv). Since \(\mathbb{E}_x[(\widetilde{A_{T}}-\widetilde{B_{T}})^2]\leq 2 \mathbb{E}_x[(\widetilde{A_{T}})^2]+ 2\mathbb{E}_x[(\widetilde{B_{T}})^2]\), we may assume that \(B=0.\) Since \(\mu \in \mathcal{S}\), there exists a nest \(\{F_k\}_k\) such that \(U_{\alpha}({\bf 1}_{F_k}\mu)\) is bounded for each \(k\) by \cite[Theorem 2.3.15]{CF}. By the monotone convergence theorem, we have
\begin{eqnarray*}
\mathbb{E}_{\nu_0}[(\widetilde{A_{T}})^2] 
&\leq & e^{2T}  \int \mathbb{E}_x[(\widetilde{A_{\infty}})^2] d\nu_0\\
&=& e^{2T}  \varlimsup_{s\searrow 0} \lim_{k\to \infty}\frac{1}{s} \int \mathbb{E}_x[(\widetilde{{\bf 1}_{F_k}A_{\infty}})^2] (\varphi_{2\alpha} -e^{-2\alpha s}P_s \varphi_{2\alpha}) dm(x),
\end{eqnarray*}
where \(({\bf 1}_{F_k}A)_t:=\int_0^t {\bf 1}_{F_k}(X_s)dA_s\). Since \(U_{\alpha}({\bf 1}_{F_k}\mu_n)\) is bounded, \(\mathbb{E}_x[(\widetilde{{\bf 1}_{F_k}A_{\infty}})^2]=2U_{2\alpha}(U_{\alpha}({\bf 1}_{F_k}\mu){\bf 1}_{F_k}\mu)\) is \(2\alpha\)-excessive by \cite[Lemma 1.2.4]{CF}. Hence
\[ \frac{1}{s} \int \mathbb{E}_x[(\widetilde{{\bf 1}_{F_k}A_{\infty}})^2] (\varphi_{2\alpha} -e^{-2\alpha s}P_s \varphi_{2\alpha}) dm(x)\]
is increasing as \(s\searrow 0\) and \(k \nearrow \infty\) respectively. So we can exchange two limits, and, by \cite[Theorem 5.4.3(iv)]{CF}, we have
\begin{eqnarray*}
\varlimsup_{s\searrow 0} \lim_{k\to \infty}\frac{1}{s} \int \mathbb{E}_x[(\widetilde{{\bf 1}_{F_k}A_{\infty}})^2] (\varphi_{2\alpha} -e^{-2\alpha s}P_s \varphi_{2\alpha}) dm(x) &=& 2 \lim_{k\to \infty} \int_{E} U_{2\alpha}(U_{\alpha}({\bf 1}_{F_k}\mu){\bf 1}_{F_k}\mu)\ d\nu_0\\
&=& 2 \lim_{k\to \infty} \int_{F_k} U_{\alpha}({\bf 1}_{F_k}\mu)\ \varphi_{2\alpha} \ d\mu\\
&\leq & 2 \mathcal{E}_{\alpha}( U_{\alpha} \mu) \ <\ \infty.
\end{eqnarray*}
Similarly, \(\mathbb{E}_{\nu_0}[\sup_{t\leq T}|A_{t}^n-A_t|^2]\) is also finite.
\end{proof}

\begin{remark}\label{nu0rem}
{\rm (i)} By applying \cite[Lemma 2.19, 2.20]{T} to \(\{e^{-\alpha t}P_t\}_t,\) \(\varphi_{\alpha}\) belongs to the local Dirichlet space \(\mathcal{F}_{loc}\) and \(\int f d\nu_0 = 0\) for any positive \(f\in \mathcal{F}\cap C_c(E)\). By \cite[Proposition 4.2.3]{CF}, \(\int f^2 d\nu_0 = 0\) for any positive \(f\in \mathcal{F}_e\). These indicate that \(\nu_0\) is the energy for the part that the process \(X\) continuously escaping to the cemetery point \(\partial\).\\
{\rm (ii)} By \cite[Theorem 7.5.6]{CF}, if \(\mathbb{P}_x(X_{\zeta -}=\partial, \zeta <\infty)=\mathbb{P}_x(\zeta <\infty)\) and some technical assumptions hold, there exists a right process \(X^{ref}\) on \(E \cup \{\partial\}\) such that the part process of \(X^{ref}\) on \(E\) is \(X.\) In \cite{CF}, \(X^{ref}\) is called a one-point extension of \(X\). For example, the reflecting Brownian motion on \([0, \infty)\) is a one-point extension of the absorbing Brownian motion on \([0, \infty)\) (\cite[\S 7.6.(\(1^{\circ}\))]{CF}). In the case that a one-point extension exists, there exists a family of \(\sigma\)-finite measures \(\{\nu_t\}_{t>0}\) called an entrance law, satisfying \(\varphi_0 dm = \int_0^{\infty} \cdot\  d\nu_t\) and \(L^{(0)}(\varphi_0, f)=\lim_{t\searrow 0} \int_E fd\nu_t\) for a \(0\)-excessive function \(f\) by \cite[\S 5.7, p344]{CF}. 

Although \(\nu_0\) satisfy the subadditively and the monotonicity over non-negative Borel functions, it does not satisfy additivity in general, so \(\nu_0\) is not a measure. However, we adopt the notation \(\int fd\nu_0\) to unify our treatment with \(m\) and \(\kappa\), which are Radon measures, and because \(\nu_0\) arises naturally as a limiting object of entrance laws \(\nu_t\) and as a functional representing the energy of mass escaping to the cemetery point \(\partial\).

\end{remark}

Recall that \(\mathbb{E}_m\:=\int_E \mathbb{E}_{x}[\ \cdot\ ]dm(x)\) and  \(\mathbb{E}_{\kappa}\:=\int_E \mathbb{E}_{x}[\ \cdot\ ]d\kappa(x)\).
By considering the counterparts of potentials of \(m\), \(\kappa\) and \(\nu_0\), we prove Proposition \ref{main2-2} as follows.
\begin{proof}[Proof of Proposition \ref{main2-2}]
 By using the polarization identity, it is enough to consider the case of \(A=B\). 
Without loss of generality, we can assume that \(m\) is a \(\sigma\)-finite measure by using the quasi-homeomorphism method in \cite[Theorem 1.4.3 and Theorem 1.5.1]{CF}. We take \(\{E_j\}_j\) satisfying \(\bigcup_j E_j =E\) and \(m(E_j)<\infty\). By \cite[Exercise 4.1.7]{CF}, we have

\begin{eqnarray*}
\mathbb{E}_m\left[(\widetilde{A_{\infty}})^2 \right] &=&  \langle 1, 2U^{2\alpha}_{A}U^{\alpha}_{A}1 \rangle_m \\
&=& \lim_{j\to \infty}\langle {\bf 1}_{E_j}, 2U^{2\alpha}_{A}U^{\alpha}_{A}1 \rangle_m \\
&=& \lim_{j\to \infty} \mathcal{E}_{2\alpha}(R_{2\alpha}{\bf 1}_{E_j}, 2U^{2\alpha}_{A}U^{\alpha}_{A}1)\\
&=& \lim_{j\to \infty} 2\langle R_{2\alpha}{\bf 1}_{E_j}, U_{\alpha}\mu \rangle_{\mu} \\
&=& 2 \langle R_{2\alpha}1, U_{\alpha}\mu \rangle_{\mu} 
\end{eqnarray*}

Since \(\kappa \in \mathcal{S}\), there exists a nest \(\{F_l\}\) such that \({\bf 1}_{F_l}\kappa \in \mathcal{S}_{00}\). Let \(C\) be a PCAF corresponding to \(\kappa\). By the definition of the L\'{e}vy system (or the proof of \cite[Theorem 4.2.1]{CF}), for \(\alpha>0,\) we have 
\begin{eqnarray}
\nonumber \lim_{l\to \infty}\widetilde{U_{\alpha}({\bf 1}_{F_l}\kappa)}(x)&=& \lim_{l\to \infty}\mathbb{E}_x\left[\int_0^{\infty}e^{-\alpha t}{\bf 1}_{F_l}(X_t)dC_t \right]\\
\nonumber &=& \lim_{l\to \infty}\mathbb{E}_x\left[e^{-\alpha \zeta}{\bf 1}_{F_l}(X_{\zeta -}) \right]\\
 \nonumber &=& \mathbb{E}_x[e^{-\alpha \zeta}{\bf 1}_{E}(X_{\zeta -})]\\
&=& 1-\alpha R_{\alpha}1(x) -\mathbb{E}_x[e^{-\alpha \zeta}{\bf 1}_{\{\partial\}}(X_{\zeta -})].\label{eq:Ukappa}
\end{eqnarray}
By \cite[Exercise 4.1.7]{CF}, \cite[Theorem 5.1.1]{FOT} and (\ref{eq:Ukappa}), similarly, we have
\begin{eqnarray*}
\mathbb{E}_{\kappa}\left[(\widetilde{A_{\infty}})^2\right]&=& \lim_{l\to \infty}\langle {\bf 1}_{F_l}, 2U^{2\alpha}_{A}U^{\alpha}_{A}1 \rangle_{\kappa} \\
&=& 2 \int_E  (1-2\alpha R_{2\alpha}1 -\mathbb{E}_x[e^{-2\alpha \zeta}{\bf 1}_{\{\partial\}}(X_{\zeta})])\ U_{\alpha}\mu \  d\mu.
\end{eqnarray*}

 Since \(\mathbb{E}_{x}\left[(\widetilde{A_{\infty}})^2\right]=2U_A^{2\alpha}U_A^{\alpha} 1 \) and \(\varphi_{2\alpha}(x):=\mathbb{E}_x[e^{-2\alpha \zeta}{\bf 1}_{\{\partial\}}(X_{\zeta})] \) are purely excessive functions with respect to \(\{e^{-2\alpha}P_t\}_t\), \(\mathbb{E}_{\nu_0}\left[(\widetilde{A_{\infty}})^2\right]\) is well-defined by \cite[\S 5.4]{CF}. By  \cite[Theorem 5.4.3.(iv)]{CF}, we have
\begin{eqnarray*}
\mathbb{E}_{\nu_0}\left[(\widetilde{A_{\infty}})^2\right] &=& \lim_{t\searrow 0}\frac{1}{t}\langle 2U_A^{2\alpha}U_A^{\alpha} 1, (1-e^{-2\alpha t}P_t )\varphi_{2\alpha} \rangle_m\\
&=&2 \int_E   \varphi_{2\alpha} \ U_{\alpha}\mu \  d\mu.
\end{eqnarray*}

Hence we have

\begin{eqnarray*}
\mathbb{E}_{{\alpha m}+\frac{\kappa}{2}+\frac{\nu_0}{2}}\left[(\widetilde{A_{\infty}})^2 \right] = \mathcal{E}_{\alpha}(U_{\alpha}\mu).
\end{eqnarray*}

\end{proof}

In order to prove Theorem \ref{main2}, we prove the following type of contractions. Without loss of generality, we may assume that \(\alpha=1\).
\begin{lemma}\label{symSmooth}
For \(\mu_n, \mu \in \mathcal{S}\), its corresponding PCAF \(A^n, A \in {\bf A}_c^+\) and any \(t\geq 0\), it holds that
\[\int_E P_t \mathbb{E}_{\cdot}[(\widetilde{A^n_{\infty}}-\widetilde{A_{\infty}})^2](x) d\kappa \leq 2 \left( \sqrt{\mathcal{E}_1(U_1\mu_n)} +\sqrt{\mathcal{E}_1(U_1\mu)} \right) \sqrt{\mathcal{E}_1(U_1(\mu_n-\mu))} \]
and
\[\int_E P_t \mathbb{E}_{\cdot}[(\widetilde{A^n_{\infty}}-\widetilde{A_{\infty}})^2](x) d\nu_0 \leq 2 \left( \sqrt{\mathcal{E}_1(U_1\mu_n)} +\sqrt{\mathcal{E}_1(U_1\mu)} \right) \sqrt{\mathcal{E}_1(U_1(\mu_n-\mu))} \]
\end{lemma}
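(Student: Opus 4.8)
The plan is to establish a single bound that covers both inequalities simultaneously, since the right-hand sides are identical and the only difference between the two claims is whether we integrate the kernel against $\kappa$ or against $\nu_0$. The key observation is that both $\kappa$ and $\nu_0$ interact with the integrand $\mathbb{E}_{\cdot}[(\widetilde{A^n_{\infty}}-\widetilde{A_{\infty}})^2]$ through the same mechanism already developed in Proposition \ref{main2-2}: there we computed $\mathbb{E}_{\kappa}[(\widetilde{A_{\infty}})^2]$ and $\mathbb{E}_{\nu_0}[(\widetilde{A_{\infty}})^2]$ and found both equal, up to constants, to $\int_E \psi\, U_1\mu \, d\mu$ for an appropriate $1$-excessive (or purely excessive) factor $\psi$. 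So the strategy is to reduce the difference $\widetilde{A^n_{\infty}}-\widetilde{A_{\infty}}$ to a single PCAF and then reuse those energy computations.

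First I would note that $\widetilde{A^n_{\infty}}-\widetilde{A_{\infty}}$ is the $1$-discounted integral of $d(A^n-A)$, and by the polarization/bilinearity structure underlying Proposition \ref{main2-2}, the quantity $\mathbb{E}_x[(\widetilde{A^n_{\infty}}-\widetilde{A_{\infty}})^2]$ expands as a sum of terms of the form $2U_A^{2}U_B^{1}1$ with $A,B$ ranging over $A^n, A$. Each such term is, by \cite[Exercise 4.1.7]{CF} and \cite[Lemma 5.1.3]{FOT}, expressible through $U_1\mu_n$ and $U_1\mu$. The first step is therefore to apply the Cauchy–Schwarz-type bound in the energy inner product to control the cross terms, yielding
\[
\mathbb{E}_x[(\widetilde{A^n_{\infty}}-\widetilde{A_{\infty}})^2] \leq 2\,\mathbb{E}_x[(\widetilde{A^n_{\infty}})^2] + 2\,\mathbb{E}_x[(\widetilde{A_{\infty}})^2]
\]
combined with the factorization identity from Proposition \ref{main2-2}, but more precisely I would keep the difference intact and track it as an energy of $U_1(\mu_n-\mu)$.

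The core step is then to apply the transition operator $P_t$ and integrate against $\kappa$ or $\nu_0$. Here the crucial point is that $P_t \mathbb{E}_{\cdot}[(\cdots)^2]$ remains, up to the factor $e^{t}$ absorbed into the discounting, controlled by the undiscounted energy, because $\mathbb{E}_x[(\widetilde{A_{\infty}})^2]$ is $2$-excessive (as verified inside the proof of Lemma \ref{nu_0Lemma}(iv)), so $P_t$ acting on it is dominated. I would invoke the computations already carried out in Proposition \ref{main2-2}: integrating against $\kappa$ gives $2\int_E (1-2R_2 1 - \varphi_2)\, U_1(\mu_n-\mu)\, d(\mu_n-\mu)$, and integrating against $\nu_0$ gives $2\int_E \varphi_2\, U_1(\mu_n-\mu)\, d(\mu_n-\mu)$, and in both cases the excessive prefactor is bounded by $1$. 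Thus both integrals are bounded by $2\mathcal{E}_1(U_1\mu_n - U_1\mu)$ plus cross terms, and an application of Cauchy–Schwarz in $\mathcal{E}_1$ to split $U_1(\mu_n-\mu)$ against $U_1\mu_n$ and $U_1\mu$ yields the factor $\bigl(\sqrt{\mathcal{E}_1(U_1\mu_n)}+\sqrt{\mathcal{E}_1(U_1\mu)}\bigr)\sqrt{\mathcal{E}_1(U_1(\mu_n-\mu))}$.

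The main obstacle I anticipate is handling the $\nu_0$ integral rigorously, since $\nu_0$ is only a subadditive functional and not a measure, so the bilinear manipulation and the interchange of $P_t$ with the $\varlimsup_{t\searrow 0}$ defining $\nu_0$ are not automatic. I would address this exactly as in Lemma \ref{nu_0Lemma}(iv): approximate $\mu_n,\mu$ along a common nest $\{F_k\}$ so that $U_1({\bf 1}_{F_k}\mu)$ is bounded, verify that the relevant integrands are $2$-excessive (hence the energy functional is a genuine increasing limit, not merely a $\varlimsup$), exchange the limits in $k$ and $t$ by monotonicity, and only then pass to the bound via \cite[Theorem 5.4.3(iv)]{CF}. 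The subadditivity of $\nu_0$ (Lemma \ref{nu_0Lemma}(ii)) is what licenses splitting the squared difference, so the final inequality survives despite $\nu_0$ not being additive.
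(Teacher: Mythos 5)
Your overall architecture matches the paper's proof: you rely on the identity \(\mathbb{E}_{\cdot}[(\widetilde{A^n_{\infty}}-\widetilde{A_{\infty}})^2]=2U_2\bigl((U_1\mu_n-U_1\mu)(\mu_n-\mu)\bigr)\) (from \cite[Lemma 5.1.3]{FOT}, after the nest approximation making \(U_1\mu_n\) bounded), you identify the correct prefactors \(U_2\kappa=1-2R_21-\varphi_2\) and \(\varphi_2\), both bounded by \(1\), and you finish via \(\int |U_1(\mu_n-\mu)|\,d(\mu_n+\mu)=\mathcal{E}_1(U_1\mu_n+U_1\mu,|U_1(\mu_n-\mu)|)\), Cauchy--Schwarz, and the contraction \(\mathcal{E}_1(|f|)\leq\mathcal{E}_1(f)\); your precautions for the \(\nu_0\) side (common nest, monotone limits, \cite[Theorem 5.4.3(iv)]{CF}) are likewise the paper's.

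However, there is a genuine gap at exactly the point where the lemma differs from the \(t=0\) computation in Proposition \ref{main2-2}: the treatment of \(P_t\). Your justification --- that \(P_t\) is dominated because \(\mathbb{E}_x[(\widetilde{A_{\infty}})^2]\) is \(2\)-excessive --- fails in three ways. First, the integrand here is \(2U_2(\sigma)\) with \(\sigma:=(U_1\mu_n-U_1\mu)(\mu_n-\mu)\) a \emph{signed} measure, and a \(2\)-potential of a signed measure is not \(2\)-excessive; the excessivity verified inside Lemma \ref{nu_0Lemma}(iv) concerns a single PCAF, i.e., a positive measure. Second, even where excessivity holds it only gives \(P_tu\leq e^{2t}u\), so your route produces a constant growing in \(t\), whereas the asserted bound is uniform in \(t\) --- and this uniformity is the point, since the lemma serves as the substitute, for \(\kappa\) and \(\nu_0\), of the \(m\)-contraction \(\int P_tu\,dm\leq\int u\,dm\). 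Third, if you restore positivity by splitting --- via \((a-b)^2\leq 2a^2+2b^2\) or via the subadditivity of \(\nu_0\), as your last sentence suggests --- you destroy the difference structure and can no longer extract the factor \(\sqrt{\mathcal{E}_1(U_1(\mu_n-\mu))}\), which is the entire content of the bound. The correct mechanism, and the one the paper uses, is the \(m\)-symmetry of \(P_t\): write \(\int_E P_t\,\mathbb{E}_{\cdot}[(\widetilde{A^n_{\infty}}-\widetilde{A_{\infty}})^2]\,d\kappa=\mathcal{E}_2(U_2\kappa,2P_tU_2\sigma)=\mathcal{E}_2(P_tU_2\kappa,2U_2\sigma)=2\int P_tU_2\kappa\;U_1(\mu_n-\mu)\,d(\mu_n-\mu)\) and use sub-Markovianity, \(P_tU_2\kappa\leq 1\); for \(\nu_0\), move \(P_t\) onto \(\varphi_2\) inside the defining \(\varlimsup\) by the same symmetry and use \(P_t\varphi_2\leq 1\). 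With that replacement your outline closes; without it, the step that makes this a statement about \(P_t\)-smeared quantities is unproved.
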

\begin{proof}
By taking nests \(\{F_k^{(n)}\}_k\) satisfying \({\bf 1}_{F_k^{(n)}} \mu_n \in \mathcal{S}_0\) and the same as the proof of Lemma \ref{nu_0Lemma}, we may assume that \(U_1\mu_n\) is bounded for each \(n\). Recall that \(2U_2((U_1\mu_n-U_1\mu)(\mu_n-\mu))\) is a quasi-continuous version of \(\mathbb{E}_{x}[(\widetilde{A^n_{\infty}}-\widetilde{A_{\infty}})^2]\) by \cite[Lemma 5.1.3]{FOT}. By the \(m\)-symmetry for \(P_t\), we have 
\begin{eqnarray*}
\int_E P_t \mathbb{E}_{\cdot}[(\widetilde{A^n_{\infty}}-\widetilde{A_{\infty}})^2](x) d\kappa &=& \mathcal{E}_2(U_2\kappa, 2 P_t U_2(U_1(\mu_n-\mu)(\mu_n-\mu)))\\
&=& 2\  \mathcal{E}_2(P_t U_2\kappa, U_2(U_1(\mu_n-\mu)(\mu_n-\mu)))\\
&= & 2 \int P_tU_2\kappa\ U_1(\mu_n-\mu)\  d(\mu_n-\mu)\\
&\leq &  2 \int P_tU_2\kappa\ |U_1(\mu_n-\mu)|\  d(\mu_n + \mu)\\
&\leq &  2 \int |U_1(\mu_n-\mu)|\  d(\mu_n + \mu)\\
&= & 2\ \mathcal{E}_1 (U_1\mu_n+U_1\mu, \left|U_1(\mu_n-\mu)\right|)\\
&\leq & 2 \left( \sqrt{\mathcal{E}_1(U_1\mu_n)} +\sqrt{\mathcal{E}_1(U_1\mu)} \right) \sqrt{\mathcal{E}_1(|U_1(\mu_n-\mu)|)} \\
&\leq & 2 \left( \sqrt{\mathcal{E}_1(U_1\mu_n)} +\sqrt{\mathcal{E}_1(U_1\mu)} \right) \sqrt{\mathcal{E}_1(U_1(\mu_n-\mu))}.
\end{eqnarray*}

Similarly, by using \cite[Theorem 5.4.3.(iv)]{CF} and \(|P_t\varphi_2 |\leq 1\), we have
\begin{eqnarray*}
\int_E P_t \mathbb{E}_{\cdot}[(\widetilde{A^n_{\infty}}-\widetilde{A_{\infty}})^2](x) d\nu_0 &=& \varlimsup_{s\searrow 0} \frac{1}{s} \int_E \mathbb{E}_x[(\widetilde{A^n_{\infty}}-\widetilde{A_{\infty}})^2] (1-e^{-2s}P_s)P_t\varphi_2(x) dm(x)\\
&=& \int_E 2U_1(\mu_n-\mu) P_t\varphi_2(x) d(\mu_n-d\mu)\\
&\leq &  2 \int |U_1(\mu_n-\mu)|\  d(\mu_n + \mu)\\
&\leq & 2 \left( \sqrt{\mathcal{E}_1(U_1\mu_n)} +\sqrt{\mathcal{E}_1(U_1\mu)} \right) \sqrt{\mathcal{E}_1(U_1(\mu_n-\mu))}.
\end{eqnarray*}
\end{proof}

We prepare the following lemmas concerning equivalent conditions for convergence of PCAFs.

\begin{lemma}\label{EnergyLem}
For \(A^n, A \in {\bf A}_c^+\), the following are equivalent.\\
{\rm (i)} There exists \(T>0\) such that \(\lim_{n\to \infty}\mathbb{E}_m\left[|\widetilde{A^n_T}-\widetilde{A_T}|^2\right]=0 \).\\
{\rm (ii)} For any \(T\geq 0\), \(\lim_{n\to \infty}\mathbb{E}_m\left[|\widetilde{A^n_T}-\widetilde{A_T}|^2\right]=0 \).\\
{\rm (iii)} There exists \(T>0\) such that \(\lim_{n\to \infty}\sup_{t\leq T}\mathbb{E}_m\left[|A^n_t-A_t|^2\right]=0 \).\\
{\rm (iv)} For any \(T\geq 0\), \(\lim_{n\to \infty}\sup_{t\leq T}\mathbb{E}_m\left[|A^n_t-A_t|^2\right]=0 \).\\
{\rm (v)} \(\lim_{n\to \infty}\mathbb{E}_m\left[|\widetilde{A^n_{\infty}}-\widetilde{A_{\infty}}|^2\right]=0.\)
\end{lemma}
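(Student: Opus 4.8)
The plan is to reduce everything to the signed continuous additive functional $C^n := A^n - A$ of bounded variation and its discounted version $\widetilde{C^n_t} := \int_0^t e^{-\alpha s}\,dC^n_s = \widetilde{A^n_t}-\widetilde{A_t}$, and then to produce two families of $L^2(\mathbb{P}_m)$ estimates that let me pass freely between the three ``time scales'' in the statement: a fixed horizon $T$, all horizons, and the horizon $\infty$ for the discounted functional, together with the pointwise-in-$t$ suprema $\sup_{t\le T}\mathbb{E}_m[\,\cdot\,]$ for the undiscounted one. Throughout I may assume $\mathbb{E}_m[(\widetilde{A^n_\infty})^2]<\infty$ and $\mathbb{E}_m[(\widetilde{A_\infty})^2]<\infty$, which holds in our setting (cf. Lemma \ref{nu_0Lemma}(iv) and Proposition \ref{main2-2}); write $a_n := \sqrt{\mathbb{E}_m[(\widetilde{C^n_\infty})^2]}$.

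First I would treat the discounted functional and establish the equivalence of (i), (ii) and (v) via a contractive tail estimate. Using the additivity $C^n_{T+s}=C^n_T+C^n_s\circ\theta_T$ one obtains the pathwise identity $\widetilde{C^n_\infty}-\widetilde{C^n_T}=e^{-\alpha T}\,\widetilde{C^n_\infty}\circ\theta_T$. Taking second moments under $\mathbb{P}_m$ and applying the Markov property gives $\mathbb{E}_m[(\widetilde{C^n_\infty}\circ\theta_T)^2]=\langle g_n,P_T 1\rangle_m$, where $g_n(x):=\mathbb{E}_x[(\widetilde{C^n_\infty})^2]\ge 0$; since $\{P_t\}$ is sub-Markovian and $m$-symmetric, $\langle g_n,P_T 1\rangle_m\le \langle g_n,1\rangle_m=a_n^2$, whence
\[
\mathbb{E}_m\big[(\widetilde{C^n_\infty}-\widetilde{C^n_T})^2\big]\le e^{-2\alpha T}a_n^2.
\]
By the triangle inequality in $L^2(\mathbb{P}_m)$ this yields, for every $T>0$,
\[
a_n(1-e^{-\alpha T})\le \big\|\widetilde{A^n_T}-\widetilde{A_T}\big\|_{L^2(\mathbb{P}_m)}\le a_n(1+e^{-\alpha T}),
\]
so for any fixed $T>0$ the vanishing of $\mathbb{E}_m[(\widetilde{A^n_T}-\widetilde{A_T})^2]$ is equivalent to $a_n\to 0$; this gives (i)$\Leftrightarrow$(v) and (ii)$\Leftrightarrow$(v) simultaneously. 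I expect this contraction—really the inequality $\langle g_n,P_T 1\rangle_m\le\langle g_n,1\rangle_m$ powered by sub-Markovianity and $m$-symmetry—to be the crux of the argument; the only delicate point is the finiteness of $g_n$, which legitimises passing the square inside $\mathbb{E}_x$.

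It then remains to bring in the undiscounted suprema, and for this I would use elementary integration by parts. Writing $C^n_t=\int_0^t e^{\alpha s}\,d\widetilde{C^n_s}=e^{\alpha t}\widetilde{C^n_t}-\alpha\int_0^t e^{\alpha s}\widetilde{C^n_s}\,ds$ and, conversely, $\widetilde{C^n_T}=e^{-\alpha T}C^n_T+\alpha\int_0^T e^{-\alpha s}C^n_s\,ds$, and combining with Minkowski's inequality in $L^2(\mathbb{P}_m)$ together with the bound $\|\widetilde{C^n_s}\|_{L^2(\mathbb{P}_m)}\le 2a_n$ obtained from the previous step, the first identity yields $\sup_{t\le T}\|C^n_t\|_{L^2(\mathbb{P}_m)}\le 4e^{\alpha T}a_n$, i.e. (v)$\Rightarrow$(iv), while the second yields $\|\widetilde{C^n_T}\|_{L^2(\mathbb{P}_m)}\le \sup_{t\le T}\|C^n_t\|_{L^2(\mathbb{P}_m)}$, i.e. (iii)$\Rightarrow$(i). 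Since (ii)$\Rightarrow$(i) and (iv)$\Rightarrow$(iii) are trivial specializations, the chain (v)$\Rightarrow$(iv)$\Rightarrow$(iii)$\Rightarrow$(i)$\Rightarrow$(v) together with (v)$\Leftrightarrow$(ii) closes the full equivalence. The remaining effort is the routine integration-by-parts bookkeeping, which I will carry out directly.
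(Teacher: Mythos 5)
Your proof is correct, and it uses the same three ingredients as the paper's proof (shift-additivity of the functionals, \(m\)-symmetry plus sub-Markovianity of \(P_t\) to pass between finite and infinite horizons, and the integration-by-parts identity to pass between discounted and undiscounted functionals), but your organization is genuinely leaner in two places. First, where the paper proves \((v)\Rightarrow(ii)\) and \((i)\Rightarrow(v)\) as two separate arguments --- the latter via the weighted inequality \((a+b)^2\le C_\varepsilon a^2+\varepsilon b^2\) with \(\varepsilon=e^T\) followed by a rearrangement --- you exploit the exact tail identity \(\widetilde{C^n_\infty}-\widetilde{C^n_T}=e^{-\alpha T}\,\widetilde{C^n_\infty}\circ\theta_T\) together with the contraction \(\langle g_n,P_T1\rangle_m\le\langle g_n,1\rangle_m\) to get the two-sided bound \(a_n(1-e^{-\alpha T})\le\|\widetilde{C^n_T}\|_{L^2(\mathbb{P}_m)}\le a_n(1+e^{-\alpha T})\), which settles \((i)\Leftrightarrow(ii)\Leftrightarrow(v)\) in a single stroke. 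Second, for \((v)\Rightarrow(iv)\) you invert the integration by parts, writing \(C^n_t=e^{\alpha t}\widetilde{C^n_t}-\alpha\int_0^t e^{\alpha s}\widetilde{C^n_s}\,ds\), and conclude with Minkowski's inequality; the paper instead derives \(e^{-t}|A^n_t-A_t|\le|\widetilde{A^n_t}-\widetilde{A_t}|+\int_0^t e^{-s}|A^n_s-A_s|\,ds\) and invokes Gr\"onwall's inequality, so your route removes that step entirely (your \((iii)\Rightarrow(i)\) is the same identity as the paper's, just with the sharp constant \(1\) from Minkowski rather than a Cauchy--Schwarz bound). One shared caveat: the step from finite horizon to \(T=\infty\) --- your lower triangle bound, and equally the paper's rearrangement in \((i)\Rightarrow(v)\) --- requires \(a_n<\infty\) a priori, which you state explicitly while the paper leaves it implicit (it holds in the paper's setting since \(\mu_n,\mu\in\mathcal{S}_0\), by Proposition \ref{main2-2}). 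In fact this hypothesis is removable: iterating your shift identity gives \(\widetilde{C^n_\infty}=\sum_{k\ge0}e^{-k\alpha T}\,\widetilde{C^n_T}\circ\theta_{kT}\), and the same symmetry/sub-Markovianity contraction yields \(\|\widetilde{C^n_\infty}\|_{L^2(\mathbb{P}_m)}\le(1-e^{-\alpha T})^{-1}\|\widetilde{C^n_T}\|_{L^2(\mathbb{P}_m)}\), so finiteness at a single horizon propagates to \(T=\infty\); adding this remark would make your argument valid for arbitrary \(A^n,A\in{\bf A}_c^+\) as stated.
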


\begin{proof}
It is clear that (ii) (resp. (iv)) implies (i) (resp. (iii)).

At first, we prove (v) implies (ii). We assume (v). Since \(\widetilde{A^n_{\infty}}=\widetilde{A^n_T}+e^{-T}\widetilde{A^n_{\infty}}\circ \theta_T\) and the \(m\)-symmetry of \(X\), we have
\begin{eqnarray}
\nonumber \mathbb{E}_m\left[|\widetilde{A^n_T}-\widetilde{A_T}|^2\right]& \leq &2\mathbb{E}_m\left[|\widetilde{A^n_{\infty}}-\widetilde{A_{\infty}}|^2 \right]+2e^{-2T}\mathbb{E}_m\left[|(\widetilde{A^n_{\infty}}-\widetilde{A_{\infty}})\circ \theta_T| ^2\right]\\
\nonumber &= &2\mathbb{E}_m\left[|\widetilde{A^n_{\infty}}-\widetilde{A_{\infty}}|^2 \right]+2e^{-2T}\mathbb{E}_m \mathbb{E}_{X_T}\left[|\widetilde{A^n_{\infty}}-\widetilde{A_{\infty}}|^2\right]\\
&\leq &(2+2e^{-2T})\mathbb{E}_m\left[|\widetilde{A^n_{\infty}}-\widetilde{A_{\infty}}|^2 \right]. \label{eq:energyT}
\end{eqnarray}
So (ii) holds.

Next, we prove (i) implies (v). We assume (i). For \(T>0\) satisfying the condition (i), we set \(\varepsilon:=e^T>1\) and \(C_{\varepsilon}:=2+\frac{1}{\varepsilon-1}\). Since we have \((a+b)^2\leq C_{\varepsilon} a^2 +\varepsilon b^2\) for any \(a,b \in \mathbb{R}\), it holds that
\begin{eqnarray*}
\mathbb{E}_m\left[|\widetilde{A^n_{\infty}}-\widetilde{A_{\infty}}|^2\right]&=&\mathbb{E}_m\left[\left|(\widetilde{A^n_{T}}-\widetilde{A_{T}})+e^{-T}(\widetilde{A^n_{\infty}}-\widetilde{A_{\infty}})\circ \theta_T \right| ^2\right]\\
&\leq & C_{\varepsilon} \mathbb{E}_m\left[|\widetilde{A^n_{T}}-\widetilde{A_{T}}|^2 \right]+\varepsilon e^{-2T} \mathbb{E}_m \mathbb{E}_{X_T}\left[|\widetilde{A^n_{\infty}}-\widetilde{A_{\infty}}|^2\right]\\
&\leq & C_{\varepsilon} \mathbb{E}_m\left[|\widetilde{A^n_{T}}-\widetilde{A_{T}}|^2 \right]+\varepsilon e^{-2T} \mathbb{E}_m\left[|\widetilde{A^n_{\infty}}-\widetilde{A_{\infty}}|^2\right],\\
\end{eqnarray*}
and we get
\begin{equation}\label{eq:EnegyLem1}
\mathbb{E}_m\left[|\widetilde{A^n_{\infty}}-\widetilde{A_{\infty}}|^2\right] \leq \frac{C_{\varepsilon}}{1-\varepsilon e^{-2T}} \mathbb{E}_m\left[|\widetilde{A^n_{T}}-\widetilde{A_{T}}|^2 \right]
\end{equation}
Here we remark that \(1-\varepsilon e^{-2T}=1-e^{-T}>0.\) So (v) holds.

We prove (iii) implies (i). We take \(T>0\) satisfying (iii), then, by using an integration by parts formula for Lebesgue-Stieltjes integrations, we have
\begin{equation}
\widetilde{A^n_T}-\widetilde{A_T} = e^{-T}(A_T^n-A_T) +\int_0^T e^{-t}(A_t^n-A_t) dt \label{eq:bubunsekibun}
\end{equation}
and so we have
\begin{eqnarray*}
\mathbb{E}_m\left[(\widetilde{A^n_{T}}-\widetilde{A_{T}})^2 \right] &\leq & 2e^{-2T}\mathbb{E}_m\left[(A_T^n-A_T)^2 \right]+ 2\mathbb{E}_m\left[\left(\int_0^T e^{-t}(A_t^n-A_t) dt \right)^2 \right]\\
 &\leq & 2e^{-2T}\mathbb{E}_m\left[(A_T^n-A_T)^2 \right]+ 2T\int_0^T e^{-2t}\mathbb{E}_m\left[(A_t^n-A_t)^2  \right]dt \\
&\leq & (2e^{-2T}+T(1-e^{-T})) \sup_{0\leq t\leq T} \mathbb{E}_m\left[(A_t^n-A_t)^2 \right]
\end{eqnarray*}
So (i) holds.

Next, we prove (v) implies (iv). We assume (v). By (\ref{eq:bubunsekibun}), we have
\[e^{-t}|A_t^n-A_t| \leq |\widetilde{A^n_t}-\widetilde{A_t}|+\int_0^t e^{-s}|A_s^n-A_s| ds.\]
By Gr\"{o}nwall's inequality, we have
\[e^{-t}|A_t^n-A_t| \leq |\widetilde{A^n_t}-\widetilde{A_t}|+\int_0^t |\widetilde{A^n_s}-\widetilde{A_s}| e^{t-s} ds.\]
Thus, by H\"older's inequality and (\ref{eq:energyT}), we have
\begin{eqnarray*}
\mathbb{E}_m\left[|A_t^n-A_t|^2 \right] &\leq & 2e^{2t} \mathbb{E}_m\left[|\widetilde{A^n_t}-\widetilde{A_t}|^2\right] 
+ 2\mathbb{E}_m\left[\left(\int_0^t |\widetilde{A^n_s}-\widetilde{A_s}| e^{2t-s}ds\right)^2 \right]\\
&\leq &  2e^{2t}\mathbb{E}_m\left[ |\widetilde{A^n_t}-\widetilde{A_t}|^2 \right] + 2\int_0^t \mathbb{E}_m\left[|\widetilde{A^n_s}-\widetilde{A_s}|^2\right] ds \int_0^t e^{4t-2s}ds \\
&\leq &  2e^{2t}(2+2e^{-2t})\mathbb{E}_m\left[|\widetilde{A^n_{\infty}}-\widetilde{A_{\infty}}|^2 \right] + 2\int_0^t (2+2e^{-2s})\mathbb{E}_m\left[|\widetilde{A^n_{\infty}}-\widetilde{A_{\infty}}|^2 \right] ds \frac{e^{4t}-e^{2t}}{2}\\
&= &  \left(4(e^{2t}+1) +(e^{4t}-e^{2t})(2t+1-e^{-2t})\right) \mathbb{E}_m\left[|\widetilde{A^n_{\infty}}-\widetilde{A_{\infty}}|^2 \right].
\end{eqnarray*}
By taking the supremum, (iv) holds.

\end{proof}

The following lemma can be proved in similar ways to the proofs of Theorem \ref{submain1} and Lemma \ref{EnergyLem}. Instead of the symmetry and the contraction of \(P_t\) with respect to \(m\), we use Lemma \ref{symSmooth}. Since the difference is only due to technical reasons, the proof of the following lemma is provided in \ref{sec app}.

\begin{lemma}\label{EnergykappaLem}
For \(A^n, A \in {\bf A}_c^+\), the following are equivalent.\\
{\rm (i)} For any \(T\geq 0\), \(\lim_{n\to \infty}\mathbb{E}_{m+\kappa+\nu_0}\left[|\widetilde{A^n_T}-\widetilde{A_T}|^2\right]=0 \).\\
{\rm (ii)} For any \(T\geq 0\), \(\lim_{n\to \infty}\sup_{t\leq T}\mathbb{E}_{m+\kappa+\nu_0}\left[|A^n_t-A_t|^2\right]=0 \).\\
{\rm (iii)} \(\lim_{n\to \infty}\mathbb{E}_{m+\kappa+\nu_0}\left[|\widetilde{A^n_{\infty}}-\widetilde{A_{\infty}}|^2\right]=0.\)
\end{lemma}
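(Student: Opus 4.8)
The plan is to follow the scheme of Lemma~\ref{EnergyLem} almost verbatim, the only structural change being that the $m$-symmetry and the $L^1(m)$-contraction of $P_t$, used there to estimate time-shifted terms, are unavailable for $\kappa$ and $\nu_0$ and must be replaced by Lemma~\ref{symSmooth}. Writing $D^n:=A^n-A$ for the (signed) difference and $\widetilde{D^n_t}:=\int_0^t e^{-s}\,dD^n_s$, the backbone is the additive-functional identity $\widetilde{D^n_\infty}=\widetilde{D^n_T}+e^{-T}\,\widetilde{D^n_\infty}\circ\theta_T$, the integration-by-parts formula \eqref{eq:bubunsekibun}, Doob's inequality, and Gr\"onwall's inequality, exactly as before. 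I would establish (iii)$\Rightarrow$(i), (iii)$\Rightarrow$(ii), (ii)$\Rightarrow$(i) and (i)$\Rightarrow$(iii). The organizing fact is the equivalence (iii)$\Leftrightarrow\rho(\mu_n,\mu)\to0$: by Proposition~\ref{main2-2} (with $\alpha=1$ and polarization applied to $D^n$) one has $\rho(\mu_n,\mu)^2=\mathbb{E}_{m+\kappa/2+\nu_0/2}[(\widetilde{D^n_\infty})^2]$, and since the integrand is non-negative this is dominated by $\mathbb{E}_{m+\kappa+\nu_0}[(\widetilde{D^n_\infty})^2]$, so (iii)$\Rightarrow\rho\to0$; the finiteness needed throughout is supplied by Lemma~\ref{nu_0Lemma}(iv).

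For the forward implications (iii)$\Rightarrow$(i), (iii)$\Rightarrow$(ii) and (ii)$\Rightarrow$(i), I would repeat the corresponding steps of Lemma~\ref{EnergyLem}, splitting $\mathbb{E}_{m+\kappa+\nu_0}[\,\cdot\,]$ into its three parts. The $m$-part is handled verbatim by contraction; for the $\kappa$- and $\nu_0$-parts the time-shifted quantity $\mathbb{E}_{\kappa}\!\left[\mathbb{E}_{X_T}[(\widetilde{D^n_\infty})^2]\right]=\int_E P_T\,\mathbb{E}_\cdot[(\widetilde{D^n_\infty})^2]\,d\kappa$ (and its $\nu_0$-analogue) is bounded, by Lemma~\ref{symSmooth}, by $2\bigl(\sqrt{\mathcal{E}_1(U_1\mu_n)}+\sqrt{\mathcal{E}_1(U_1\mu)}\bigr)\sqrt{\mathcal{E}_1(U_1(\mu_n-\mu))}$. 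Under $\rho(\mu_n,\mu)\to0$ the factor $\sqrt{\mathcal{E}_1(U_1(\mu_n-\mu))}=\rho(\mu_n,\mu)$ vanishes while $\sqrt{\mathcal{E}_1(U_1\mu_n)}\le\sqrt{\mathcal{E}_1(U_1\mu)}+\rho(\mu_n,\mu)$ stays bounded, so these contributions vanish. Gr\"onwall (for (iii)$\Rightarrow$(ii)) and integration by parts (for (ii)$\Rightarrow$(i)) then transfer the estimates between the $\widetilde{A}$- and $A$-formulations just as in Lemma~\ref{EnergyLem}.

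The hard part is the closing implication (i)$\Rightarrow$(iii), and the difficulty is genuine: the $m$-part of (iii) follows from the $m$-part of (i) by Lemma~\ref{EnergyLem}, but Proposition~\ref{main2-2} shows that $\rho(\mu_n,\mu)^2$ truly involves $\kappa$ and $\nu_0$, which cannot be recovered from the $m$-part alone, so the self-referential absorption of Lemma~\ref{EnergyLem} (which relied on contraction) is not available. My proposed remedy is to exploit that Lemma~\ref{symSmooth} controls the \emph{discounted tail past time} $T$: since $\widetilde{D^n_\infty}-\widetilde{D^n_T}=e^{-T}\,\widetilde{D^n_\infty}\circ\theta_T$, one gets $\mathbb{E}_{\kappa}[(\widetilde{D^n_\infty}-\widetilde{D^n_T})^2]=e^{-2T}\int_E P_T\,\mathbb{E}_\cdot[(\widetilde{D^n_\infty})^2]\,d\kappa=O\!\bigl(e^{-2T}(\sqrt{\mathcal{E}_1(U_1\mu_n)}+\sqrt{\mathcal{E}_1(U_1\mu)})\,\rho(\mu_n,\mu)\bigr)$, and likewise for $\nu_0$; by Cauchy--Schwarz this bounds $\bigl|\mathbb{E}_{\kappa}[(\widetilde{D^n_\infty})^2]-\mathbb{E}_{\kappa}[(\widetilde{D^n_T})^2]\bigr|$ by a constant times $e^{-T}(\sqrt{\mathcal{E}_1(U_1\mu_n)}+\sqrt{\mathcal{E}_1(U_1\mu)})\,\rho(\mu_n,\mu)$. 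Feeding these into $\rho(\mu_n,\mu)^2=\mathbb{E}_{m+\kappa/2+\nu_0/2}[(\widetilde{D^n_\infty})^2]$, using the $m$-part of (i), the bound $\sqrt{\mathcal{E}_1(U_1\mu_n)}\le\sqrt{\mathcal{E}_1(U_1\mu)}+\rho(\mu_n,\mu)$, and keeping $T$ as a free parameter, I expect a quadratic inequality in $\rho(\mu_n,\mu)$ whose solution gives $\limsup_{n}\rho(\mu_n,\mu)\le C\,e^{-T}$ with $C$ independent of $T$; letting $T\to\infty$ then forces $\rho(\mu_n,\mu)\to0$, i.e.\ (iii). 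The delicate point is that $\sqrt{\mathcal{E}_1(U_1\mu_n)}$ is not bounded a priori, and it is precisely the $e^{-T}$ decay of the tail that allows the quadratic to be closed in spite of this, which is why a naive pointwise ``weak contraction'' for $\kappa$ and $\nu_0$ does not suffice.
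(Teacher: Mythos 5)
Your proposal is correct and takes essentially the same route as the paper's Appendix proof: the forward implications are handled by the machinery of Theorem \ref{submain1} and Lemma \ref{EnergyLem} with Lemma \ref{symSmooth} substituting for $m$-symmetry and contraction on the $\kappa$- and $\nu_0$-parts, and your closing of (i)$\Rightarrow$(iii) --- bounding the shifted tail $e^{-2T}\int_E P_T\,\mathbb{E}_{\cdot}[(\widetilde{A^n_{\infty}}-\widetilde{A_{\infty}})^2]\,d(\kappa+\nu_0)$ via Lemma \ref{symSmooth}, feeding it into the identity $\rho(\mu_n,\mu)^2=\mathbb{E}_{m+\kappa/2+\nu_0/2}[(\widetilde{A^n_{\infty}}-\widetilde{A_{\infty}})^2]$ from Proposition \ref{main2-2}, and solving the resulting quadratic inequality in $\rho(\mu_n,\mu)$ to get $\varlimsup_{n}\rho(\mu_n,\mu)\le C/(e^T-C)\to 0$ as $T\to\infty$ --- is exactly the paper's argument, including your observation that the a priori unboundedness of $\sqrt{\mathcal{E}_1(U_1\mu_n)}$ is absorbed by the $e^{-T}$ decay. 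The only cosmetic deviation is that the paper proves (iii)$\Rightarrow$(ii) through the Fukushima decomposition of $U_1\mu_n$ rather than your Gr\"onwall route, but both rest on the same estimates.
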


\begin{remark}
If \(\kappa\) is an excessive measure, that is, \(\int P_t f d\kappa \leq \int f d\kappa\) for any positive Borel function, the equivalence in Lemma \ref{EnergyLem}  with \(m\) replaced by \(\kappa\) holds.
\end{remark}

\begin{proof}[Proof of Theorem \ref{main2}]
We remark that \(\mathbb{E}_{m+\kappa /2+\nu_0/2}\) is comparable to \(\mathbb{E}_{m+\kappa+\nu_0}\). 
We assume 
\[\lim_{n\to \infty}\mathbb{E}_{m+\kappa+\nu_0}\left[\sup_{0\leq t\leq T}|A^n_t-A_t|^2\right]=0\] for any \(T>0\). Then \(\sup_{0\leq t\leq T}\mathbb{E}_{m+\kappa+\nu_0}\left[|A^n_t-A_t|^2\right]\) also converges to \(0\). By Lemma \ref{EnergykappaLem}, it holds that
\begin{equation}\label{eq:main2-1}
\lim_{n\to \infty}\mathbb{E}_{m+\kappa+\nu_0}\left[|\widetilde{A^n_{\infty}}-\widetilde{A_{\infty}}|^2\right]=0.
\end{equation}
Hence, by (\ref{eq:main2-A}), \(\mu_n\) converges to \(\mu\) in \(\rho\).

Next, we assume that \(\mu_n\) converges to \(\mu\) in \(\rho.\) Then, by Lemma \ref{EnergykappaLem}, we have, for any \(T\geq 0\), \[\lim_{n\to \infty}\sup_{0\leq t\leq T}\mathbb{E}_{m+\kappa+\nu_0}\left[|A_t^n-A_t|^2\right]=0.\]

For fixed \(T>0\), we set \(N:=2^k\) and \(t_i:=iT/2^k\) for integers \(0\leq i \leq N.\) Then we have, for any \(t\leq T\),
\begin{eqnarray*}
|A^n_t-A_t|&\leq & \max_{1\leq i\leq N} |A^n_{t_i}-A_{t_i}|+ \sum_{i=0}^{N-1}  |A^n_{t}-A^n_{t_i}|{\bf 1}_{(t_i, t_{i+1}]}(t) + \sum_{i=0}^{N-1}  |A_{t}-A_{t_i}|{\bf 1}_{(t_i, t_{i+1}]}(t)\\
&\leq & \max_{1\leq i\leq N} |A^n_{t_i}-A_{t_i}|+ \sum_{i=0}^{N-1}  |A^n_{t_{i+1}}-A^n_{t_i}|{\bf 1}_{(t_i, t_{i+1}]}(t) + \sum_{i=0}^{N-1}  |A_{t_{i+1}}-A_{t_i}|{\bf 1}_{(t_i, t_{i+1}]}(t)\\
&\leq & \max_{1\leq i\leq N} |A^n_{t_i}-A_{t_i}|+ \sum_{i=0}^{N-1}  |A^n_{\frac{T}{N}}\circ \theta_{t_i}|{\bf 1}_{(t_i, t_{i+1}]}(t) + \sum_{i=0}^{N-1} |A_{\frac{T}{N}}\circ \theta_{t_i}|{\bf 1}_{(t_i, t_{i+1}]}(t),
\end{eqnarray*}
and we have
\begin{eqnarray}
\nonumber \mathbb{E}_m[\sup_{t\leq T}|A^n_t-A_t|^2]&\leq &3 \sum_{i=1}^N\mathbb{E}_m[|A^n_{t_i}-A_{t_i}|^2] +3\sum_{i=0}^{N-1}  \mathbb{E}_m[|A^n_{\frac{T}{N}}\circ \theta_{t_i}|^2]  + 3\sum_{i=0}^{N-1} \mathbb{E}_m[|A_{\frac{T}{N}}\circ \theta_{t_i}|^2] \\
&\leq & 3N \sup_{t\leq T}\mathbb{E}_m[|A^n_{t}-A_{t}|^2]+3N \mathbb{E}_m[|A^n_{\frac{T}{N}}|^2] + 3N \mathbb{E}_m[|A_{\frac{T}{N}}|^2]. \label{eq:localU-1}
\end{eqnarray}

By \cite[p.245]{FOT}, we have
\begin{eqnarray}
\nonumber \lefteqn{N \mathbb{E}_m[|A^n_{\frac{T}{N}}|^2] }\\
\nonumber &\leq & 2Ne^{\frac{T}{N}}\mathbb{E}_m\left[\int_0^{\frac{T}{N}} e^{-s}\mathbb{E}_{X_s}[A_{\frac{T}{N}}^n]dA_s^n  \right]\\
&\leq & 2T e^{\frac{2T}{N}} \mathcal{E}_1\left(U_1\mu_n, (1-e^{-\frac{T}{N}}P_{\frac{T}{N}})U_1\mu_n \right) \label{eq:localU-1-2}\\
\nonumber &\leq & 2T e^{\frac{2T}{N}} \left|\mathcal{E}_1\left(U_1\mu_n -U_1\mu , (1-e^{-\frac{T}{N}}P_{\frac{T}{N}})U_1\mu_n \right) \right|\\
\nonumber && \hspace{20mm}+ 2T e^{\frac{2T}{N}} \left|\mathcal{E}_1\left(U_1\mu , (1-e^{-\frac{T}{N}}P_{\frac{T}{N}})(U_1\mu -U_1\mu_n) \right) \right|+ 2T e^{\frac{2T}{N}}\mathcal{E}_1\left(U_1\mu , (1-e^{-\frac{T}{N}}P_{\frac{T}{N}})U_1\mu  \right)\\
\nonumber &\leq & 2T e^{\frac{2T}{N}} \sqrt{\mathcal{E}_1\left(U_1\mu_n -U_1\mu \right)} \sqrt{\mathcal{E}_1\left((1-e^{-\frac{T}{N}}P_{\frac{T}{N}})U_1\mu_n \right)}\\
 && + 2T e^{\frac{2T}{N}} \sqrt{\mathcal{E}_1\left((1-e^{-\frac{T}{N}}P_{\frac{T}{N}})U_1\mu \right)} \sqrt{\mathcal{E}_1(U_1\mu -U_1\mu_n)}+ 2T e^{\frac{2T}{N}}\mathcal{E}_1\left(U_1\mu , (1-e^{-\frac{T}{N}}P_{\frac{T}{N}})U_1\mu  \right). \label{eq:localU-2}
\end{eqnarray}
Since \(\mathcal{E}_1(e^{-t}P_tf) \leq \mathcal{E}_1(f)\) holds for \(f\in \mathcal{F}\), we have
\begin{equation}
\mathcal{E}_1\left((1-e^{-\frac{T}{N}}P_{\frac{T}{N}})U_1\mu_n \right) \leq 4 \mathcal{E}_1(U_1\mu_n) \label{eq:localU-3}
\end{equation}
and
\begin{equation}
\mathcal{E}_1\left((1-e^{-\frac{T}{N}}P_{\frac{T}{N}})U_1\mu  \right) \leq 4 \mathcal{E}_1(U_1\mu). \label{eq:localU-4}
\end{equation}
Hence, by (\ref{eq:localU-1}), (\ref{eq:localU-2}), (\ref{eq:localU-3}),  (\ref{eq:localU-4}) and \(\sup_n \rho(\mu_n, \mu_n)<\infty \), we have
\begin{eqnarray}
\nonumber \mathbb{E}_m[ \sup_{t\leq T}|A^n_t-A_t|^2] &\leq & 3N \sup_{t\leq T}\mathbb{E}_m[|A^n_{t}-A_{t}|^2]+C\sqrt{\mathcal{E}_1(U_1\mu -U_1\mu_n)}\\
&& \hspace{20mm}+C\mathcal{E}_1\left(U_1\mu , (1-e^{-\frac{T}{N}}P_{\frac{T}{N}})U_1\mu  \right) \label{eq:localU-5}
\end{eqnarray}
for \(C>0\) independent of \(n,N\). By \cite[p.245]{FOT}, for any \(\varepsilon>0,\) we can take large \(N\) satisfying 
\[\mathcal{E}_1\left(U_1\mu , (1-e^{-\frac{T}{N}}P_{\frac{T}{N}})U_1\mu  \right) <\varepsilon, \] and, by letting \(n\) tends to infinity in (\ref{eq:localU-5}), we have
\[\varlimsup_{n\to \infty} \mathbb{E}_m[ \sup_{t\leq T}|A^n_t-A_t|^2]\leq C\varepsilon. \]
Similarly, we have
\begin{eqnarray}
\mathbb{E}_{\kappa}[\sup_{t\leq T}|A^n_t-A_t|^2]\leq  3N \sup_{t\leq T}\mathbb{E}_{\kappa}[|A^n_{t}-A_{t}|^2]+3\sum_{i=0}^{N-1} \int P_{t_i} \mathbb{E}_{\cdot}[(A^n_{\frac{T}{N}})^2]d\kappa  + 3\sum_{i=0}^{N-1} \int P_{t_i} \mathbb{E}_{\cdot}[(A_{\frac{T}{N}})^2]d\kappa \label{eq:localU-6}
\end{eqnarray}
and, using a nest \(\{F_l\}\) satisfying \({\bf 1}_{F_l}\kappa \in \mathcal{S}_0\) and (\ref{eq:Ukappa}), we have

\begin{eqnarray}
\nonumber \sum_{i=0}^{N-1} \int P_{t_i} \mathbb{E}_{\cdot}[(A^n_{\frac{T}{N}})^2]d\kappa &\leq &2\sum_{i=0}^{N-1} \int P_{t_i} \mathbb{E}_{\cdot}\left[\int_0^{\frac{T}{N}}\mathbb{E}_{X_s}[A_{\frac{T}{N}}^n]dA_s^n  \right]  d\kappa\\
\nonumber &\leq & 2 \mathbb{E}_{\kappa} \left[\int_0^{T}\mathbb{E}_{X_s}[A_{\frac{T}{N}}^n]dA_s^n  \right]\\
\nonumber &\leq & 2e^T \int _E \left(U_{A^n}^1\mathbb{E}_{\cdot}[A_{\frac{T}{N}}^n]\right)(x) d\kappa(x)\\
\nonumber &=&2e^T \lim_{l\to \infty} \mathcal{E}_1 \left( U_{A^n}^1\mathbb{E}_{\cdot}[A_{\frac{T}{N}}^n], U_1({\bf 1}_{F_l}\kappa )\right)\\
\nonumber &=& 2e^T  \lim_{l\to \infty} \int _E U_1({\bf 1}_{F_l}\kappa ) \mathbb{E}_x[A_{\frac{T}{N}}^n] d\mu_n(x)\\
\nonumber &\leq & 2e^T  \int _E  \mathbb{E}_x[A_{\frac{T}{N}}^n] d\mu_n(x)\\
&\leq &2e^Te^{\frac{T}{N}} \mathcal{E}_1\left(U_1\mu_n, (1-e^{-\frac{T}{N}}P_{\frac{T}{N}})U_1\mu_n \right)\label{eq:localU-8}
\end{eqnarray}

Here, in (\ref{eq:localU-8}), we used the same way as \cite[p.245]{FOT}. Since the last term in (\ref{eq:localU-8}) is the same one as (\ref{eq:localU-1-2}) up to constant, for any \(\varepsilon>0,\) we have
\[\varlimsup_{n\to \infty} \mathbb{E}_{\kappa}[ \sup_{t\leq T}|A^n_t-A_t|^2]\leq C\varepsilon. \]

Since \(\varphi_2(x):=\mathbb{E}_x[e^{-2\zeta}{\bf 1}_{\{\partial\}}(x_{\zeta -})]\) plays the same role as \(U_2 \kappa\), by the same way as above and Lemma \ref{nu_0Lemma}, we have
\[\varlimsup_{n\to \infty} \mathbb{E}_{\nu_0}[ \sup_{t\leq T}|A^n_t-A_t|^2]\leq C\varepsilon. \]

So the proof is completed.
\end{proof}

\begin{corollary}\label{main2a}
Suppose that \(X\) is conservative, that is, \(\mathbb{P}_x(\zeta=\infty)=1\) for any \(x.\) For \(\mu _n, \mu \in \mathcal{S}_0\), let \(A^n, A\) be the corresponding PCAFs respectively. Revuz measures \(\mu_n\) converges to \(\mu\) in \(\rho\) if and only if there exists \(T>0\) such that 
\[\lim_{n\to \infty}\mathbb{E}_{m}\left[\sup_{0\leq t\leq T}|A^n_t-A_t|^2\right]=0.\]
In addition, this convergence is also equivalent to the existence of \(T>0\) such that 
\[\lim_{n\to \infty} \sup_{t\leq T} \mathbb{E}_{m}\left[|A^n_t-A_t|^2\right]=0.\]

Moreover, for \(\mu, \nu \in \mathcal{S}_0\), let \(A, B\) be the corresponding PCAFs respectively, then it holds that 
\begin{equation*}
\mathbb{E}_{\alpha m}\left[\widetilde{A_{\infty}}\widetilde{B_{\infty}}\right] =\mathcal{E}_{\alpha}(U_{\alpha}\mu, U_{\alpha}\nu)
\end{equation*}
where we set \(\widetilde{A_t}:=\int_0^t e^{-\alpha s}dA_s\) and \(\widetilde{B_t}:=\int_0^t e^{-\alpha s}dB_s\) for \(\alpha>0\).
\end{corollary}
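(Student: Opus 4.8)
The plan is to observe that conservativeness degenerates the weight $m+\kappa+\nu_0$ to $m$, after which the corollary becomes a direct specialization of Proposition \ref{main2-2}, Theorem \ref{main2}, and Lemma \ref{EnergyLem}. First I would show that $\mathbb{P}_x(\zeta=\infty)=1$ forces $\kappa=0$ and $\nu_0=0$. Since $e^{-\alpha\zeta}=0$ $\mathbb{P}_x$-a.s. for every $\alpha>0$ when $\zeta=\infty$ a.s., the right-hand side of (\ref{eq:Ukappa}) vanishes, so $\lim_{l\to\infty}\widetilde{U_\alpha(\mathbf{1}_{F_l}\kappa)}=0$ q.e.; as this is an increasing limit of nonnegative potentials, each $U_\alpha(\mathbf{1}_{F_l}\kappa)$ is identically zero, and injectivity of the $\alpha$-potential on $\mathcal{S}_0$ gives $\mathbf{1}_{F_l}\kappa=0$ for every $l$, hence $\kappa=0$. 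In the same way $\varphi_\alpha(x)=\mathbb{E}_x[e^{-\alpha\zeta}\mathbf{1}_{\{\partial\}}(X_{\zeta-})]=0$, so by the very definition of $\nu_0$ through $\varphi_2$ one gets $\int f\,d\nu_0=0$ for all positive Borel $f$, i.e. $\nu_0=0$.

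With $\kappa=\nu_0=0$, the weight $\alpha m+\tfrac{\kappa}{2}+\tfrac{\nu_0}{2}$ collapses to $\alpha m$, so (\ref{eq:main2-A}) of Proposition \ref{main2-2} reads $\mathbb{E}_{\alpha m}[\widetilde{A_{\infty}}\widetilde{B_{\infty}}]=\mathcal{E}_\alpha(U_\alpha\mu,U_\alpha\nu)$, which is the energy formula claimed in the corollary. Expanding the square by bilinearity and setting $\alpha=1$ then gives $\mathbb{E}_m[(\widetilde{A^n_{\infty}}-\widetilde{A_{\infty}})^2]=\mathcal{E}_1(U_1\mu_n-U_1\mu)=\rho(\mu_n,\mu)^2$, so $\rho(\mu_n,\mu)\to0$ is exactly condition (v) of Lemma \ref{EnergyLem}.

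For the two equivalences I would close a single loop. If $\mu_n\to\mu$ in $\rho$, then Theorem \ref{main2} with $\mathbb{E}_{m+\kappa+\nu_0}=\mathbb{E}_m$ gives $\mathbb{E}_m[\sup_{0\le t\le T}|A^n_t-A_t|^2]\to0$ for every $T>0$, hence for some $T$. Conversely, if this local-uniform $L^2$ bound holds for a single $T$, then bounding each time by the supremum gives $\sup_{t\le T}\mathbb{E}_m[|A^n_t-A_t|^2]\to0$, which is condition (iii) of Lemma \ref{EnergyLem}; the lemma promotes this to (v), and the identity of the previous paragraph returns $\rho(\mu_n,\mu)\to0$. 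This settles the first equivalence, and since condition (iii) is precisely the ``there exists $T$'' statement $\sup_{t\le T}\mathbb{E}_m[|A^n_t-A_t|^2]\to0$, the same chain also gives the second.

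The only genuinely new ingredient is the vanishing of $\kappa$ and $\nu_0$, and I expect that to be the step needing the most care: one must pass from $\widetilde{U_\alpha(\mathbf{1}_{F_l}\kappa)}\equiv0$ to $\mathbf{1}_{F_l}\kappa=0$ using injectivity of the potential on $\mathcal{S}_0$ together with $\mathbf{1}_{F_l}\kappa\in\mathcal{S}_{00}$, rather than any analytic estimate. Everything else is a transcription of the already-established Theorem \ref{main2}, Proposition \ref{main2-2}, and Lemma \ref{EnergyLem} to the degenerate weight $m$.
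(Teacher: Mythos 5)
Your proposal is correct and follows essentially the same route as the paper, whose proof is just the one-line citation ``This follows from Theorem \ref{main2}, Proposition \ref{main2-2} and Lemma \ref{EnergyLem}.'' You merely make explicit the step the paper leaves implicit—that conservativeness forces $\varphi_\alpha\equiv 0$ and hence $\nu_0=0$, and via (\ref{eq:Ukappa}) together with injectivity of $U_\alpha$ on $\mathcal{S}_0$ also $\kappa=0$—so the weight $\alpha m+\frac{\kappa}{2}+\frac{\nu_0}{2}$ collapses to $\alpha m$ and the cited results apply verbatim.
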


\begin{proof}
This follows from Theorem \ref{main2}, Proposition \ref{main2-2} and Lemma \ref{EnergyLem}.
\end{proof}

\begin{remark}
Let \({\bf A}_{c,0}^+:= {\bf A}_{c}^+ \cap L^2(\mathbb{P}_{m+\kappa+\nu_0})\). Then, we can paraphrase Theorem \ref{main2} as follows. The Revuz map from \(\mathcal{S}_0\) equipped with the topology \(\rho\), to \({\bf A}_{c,0}^+\) equipped with the topology induced by \(L^2(\mathbb{P}_{m+\kappa +\nu_0})\) with the local uniform topology, is a homeomorphism.
\end{remark}

\section{Vague convergence of Revuz measures}\label{mainsec2}
In this section, we consider the conditions under which vague convergence for Revuz measures belonging to \(\mathcal{S}_0\) follows from convergence of PCAFs. Let \((\mathcal{E}, \mathcal{F})\) be a regular Dirichlet form on \(L^2(E;m)\) associated with an \(m\)-symmetric Hunt process \(X\), \(\kappa\) be the killing measure of \(X\) and \(\nu_0\) be the energy functional defined in Section \ref{mainsec}. We make the following assumption.
\begin{assumption}\label{ass1}
One of the following conditions holds\\
(1) There exists \(T>0\) such that \(\sup_n \mathbb{E}_{m+\kappa +\nu_0}[(A_T^n)^2]<\infty \)\\
(2) There exists \(T>0\) such that \(\sup_n \mathbb{E}_m[(A_T^n)^2]<\infty \), and, for any compact set \(K\), there exists \(0\leq C_K<1\) such that \(\mathbb{E}_x[e^{-\zeta}]\leq C_K\) for q.e. \(x\in K.\)
\end{assumption}
Assumption \ref{ass1} (2) is intended to prevent immediate killing inside of a process.

The following is one of the main results, concerning a necessary condition for convergence of Revuz measures under the vague topology.
\begin{theorem}\label{mainvague}
For \(\mu _n, \mu \in \mathcal{S}_0\), let \(A^n, A\in {\bf A}_c^+\) be the corresponding PCAFs respectively. Suppose that Assumption \ref{ass1} holds. If \(A^n\) converges to \(A\) \(\mathbb{P}_x\)-almost surely for q.e. \(x\in E\) with the local uniform topology, then \(\mu_n\) converges to \(\mu\) vaguely.
\end{theorem}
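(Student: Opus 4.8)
The goal is to prove $\int_E f\,d\mu_n\to\int_E f\,d\mu$ for every non-negative $f\in C_c(E)$; by linearity and positivity of the measures this is the asserted vague convergence. The plan is to read off this convergence from the almost sure local uniform convergence of the PCAFs by testing against $\alpha$-potentials. The basic tool is the identity coming from $U^{\alpha}_{A^n}f=\widetilde{U_{\alpha}(f\mu_n)}$ (see \eqref{eq:U_A} and \cite[Lemma 5.1.3]{FOT}): for $\alpha>0$ and $f\in C_c(E)$ with $f\ge 0$,
\[\mathbb{E}_m\!\left[\int_0^\infty e^{-\alpha t}f(X_t)\,dA^n_t\right]=\int_E \widetilde{R_\alpha 1}\,f\,d\mu_n,\qquad \alpha\,\widetilde{R_\alpha 1}(x)=1-\mathbb{E}_x[e^{-\alpha\zeta}].\]
First I would pass $n\to\infty$ on the left-hand side. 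For a fixed path the continuous increasing maps $t\mapsto A^n_t$ converge uniformly to $A_{\cdot}$ on $[0,T]$, so the Stieltjes measures $dA^n$ converge weakly to $dA$ there; since $A$ is continuous, $dA$ is non-atomic, the countably many jump times of $X$ are $dA$-null, and the bounded integrand $t\mapsto e^{-\alpha t}f(X_t)$ is continuous $dA$-almost everywhere. Hence $\int_0^T e^{-\alpha t}f(X_t)\,dA^n_t\to\int_0^T e^{-\alpha t}f(X_t)\,dA_t$ $\mathbb{P}_x$-almost surely for q.e.\ $x$, while the tail $\int_T^\infty$ is made uniformly small in $n$ by the $L^2$-bound in Assumption \ref{ass1}. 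The upshot is
\[\int_E \widetilde{R_\alpha 1}\,f\,d\mu_n\longrightarrow\int_E \widetilde{R_\alpha 1}\,f\,d\mu\qquad(n\to\infty),\quad\text{for every }\alpha>0,\ f\in C_c(E),\ f\ge0,\]
that is, the weighted measures $w_\alpha\mu_n$ with $w_\alpha:=1-\mathbb{E}_{\cdot}[e^{-\alpha\zeta}]$ converge vaguely to $w_\alpha\mu$.

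It remains to let $\alpha\to\infty$ to shed the weight $w_\alpha$. Since the symmetric Hunt process does not die instantaneously, $\mathbb{P}_x(\zeta>0)=1$ for q.e.\ $x$, so $w_\alpha\uparrow 1$ q.e., hence $\mu$- and $\mu_n$-a.e. For $f\ge0$ this gives the lower bound for free: $\liminf_n\int_E f\,d\mu_n\ge\sup_\alpha\int_E w_\alpha f\,d\mu=\int_E f\,d\mu$. The reverse inequality amounts to controlling $\int_E(1-w_\alpha)f\,d\mu_n=\int_E\mathbb{E}_{\cdot}[e^{-\alpha\zeta}]\,f\,d\mu_n$ uniformly in $n$ as $\alpha\to\infty$, and this is exactly where Assumption \ref{ass1} enters, through a uniform local mass bound $\sup_n\mu_n(K)<\infty$ together with uniform absolute continuity with respect to capacity. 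Under Assumption \ref{ass1}(1) these follow from Proposition \ref{main2-2}: by Lemma \ref{EnergykappaLem} the hypothesis upgrades to $\sup_n\mathbb{E}_{m+\kappa+\nu_0}[(\widetilde{A^n_\infty})^2]<\infty$, whence \eqref{eq:main2-A} with $A=B=A^n$ gives $\sup_n\mathcal{E}_1(U_1\mu_n)<\infty$, and the equilibrium-potential inequality yields the capacity domination $\mu_n(B)\le\sqrt{\mathcal{E}_1(U_1\mu_n)}\,\sqrt{\mathrm{Cap}_1(B)}\le C\sqrt{\mathrm{Cap}_1(B)}$. Under Assumption \ref{ass1}(2) the $m$-energy identity $\mathbb{E}_m[(\widetilde{A^n_\infty})^2]=2\int_E\widetilde{R_2 1}\,U_1\mu_n\,d\mu_n$ combined with the lower bound $\widetilde{R_2 1}\ge\tfrac12(1-C_K)>0$ on a compact $K$, supplied by the no-immediate-killing condition, gives the corresponding local bounds.

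Given these bounds, the upper bound is completed by a quasi-uniform convergence argument: $\mathbb{E}_{\cdot}[e^{-\alpha\zeta}]\downarrow0$ q.e., hence uniformly off an open set of arbitrarily small capacity, and uniform absolute continuity with respect to capacity controls the contribution of that exceptional set uniformly in $n$; letting first $\alpha\to\infty$ and then the capacity tend to $0$ yields $\limsup_n\int_E f\,d\mu_n\le\int_E f\,d\mu$. Combining the two bounds gives $\int_E f\,d\mu_n\to\int_E f\,d\mu$, the desired vague convergence.

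I expect the decisive difficulty to be the very first passage to the limit: transferring the pathwise almost sure convergence of the time-integrals through the infinite-measure expectation $\mathbb{E}_m$. On an infinite measure an $L^2$-bound alone does not force $L^1$-convergence, so one must combine the uniform integrability coming from Assumption \ref{ass1} with the spatial tightness afforded by the compact support of $f$ (and, in the energy route, by the $\mathcal{S}_{00}$-structure of the test measures), and one must justify the exchange of the limits $n\to\infty$ and $\alpha\to\infty$. Both forms of Assumption \ref{ass1} are tailored precisely to these points: they prevent the masses $\mu_n$ from escaping either to sets of small capacity or, through the killing mechanism recorded by $\kappa$ and $\nu_0$, to the cemetery point $\partial$---the two ways in which the upper bound $\limsup_n\int_E f\,d\mu_n\le\int_E f\,d\mu$ could otherwise fail.
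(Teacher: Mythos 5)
Your overall architecture is viable and, in its second half, genuinely different from the paper's, but there is one genuine gap, and it sits exactly where you yourself flag the ``decisive difficulty'': the first passage to the limit, $\lim_n \mathbb{E}_m[\int_0^\infty e^{-\alpha t}f(X_t)\,dA^n_t]=\mathbb{E}_m[\int_0^\infty e^{-\alpha t}f(X_t)\,dA_t]$, is taken under the possibly infinite measure $m$, and the ingredients you cite do not close it. Almost sure pathwise convergence plus an $L^2(\mathbb{P}_m)$-bound gives neither dominated convergence nor Vitali over an infinite measure; the compact support of $f$ does not supply spatial tightness, since the set of starting points from which the path charges $K=\mathrm{supp}(f)$ typically has infinite $m$-measure; no dominating function is available because the potentials $U_\alpha(f\mu_n)$ of measures in $\mathcal{S}_0$ need not be uniformly bounded; and the $\mathbb{E}_m$-level moment bounds from Assumption \ref{ass1} do not yield q.e.-uniform $\mathbb{E}_x$-moment bounds, so even the pointwise convergence $U_\alpha(f\mu_n)(x)\to U_\alpha(f\mu)(x)$ for q.e.\ $x$ is not immediate. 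The paper closes precisely this hole by testing against $h\,dm$ with $h$ bounded and in $L^1(m)$: then $\mathbb{P}_{hm}$ is a finite measure, $\mathbb{E}_{hm}[(\widetilde{(fA^n)}_t)^2]\leq \|h\|_\infty\|f\|_\infty^2\,\mathbb{E}_m[(\widetilde{A^n_\infty})^2]$ is uniformly bounded by Lemma \ref{UIlem}, Vitali applies, and the Revuz correspondence converts the expectation into $\int_E S_th\cdot f\,d\mu_n$ with $S_th:=\int_0^t P_sh\,ds$; the weight $h$ is shed at the end using $\mathcal{E}_1(\tfrac1t S_th-h)\to 0$ together with the uniform energy bounds, taking $h=1$ on $K$. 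Your step could be repaired the same way, or by reducing $\int\cdot\,dm$ to the finite hitting measure $\lambda(B):=\int_E\mathbb{E}_x[e^{-\alpha\sigma_K};X_{\sigma_K}\in B]\,dm(x)$, but as written it is a missing argument, not a technicality. Note also that your discounted-to-infinity integrals create a tail $\int_T^\infty$ that local uniform convergence does not control pathwise; the paper sidesteps this by using finite-horizon integrals $\int_0^t f(X_s)\,dA^n_s$ throughout.

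The remainder of your plan is a legitimate alternative route and your use of Assumption \ref{ass1} matches the paper's in substance: under (1) the uniform bound $\sup_n\mathcal{E}_1(U_1\mu_n)<\infty$ does follow from \eqref{eq:main2-A}, though the lemma you want is the uniform-boundedness statement, Lemma \ref{UIlemkappa}, not Lemma \ref{EnergykappaLem}; under (2) your global identity, with $\widetilde{R_21}$ bounded below only on compacts, yields only $\int_K U_1\mu_n\,d\mu_n$ bounded, so you must localize (e.g.\ via $U_1\mu_n\geq U_1({\bf 1}_K\mu_n)$, or as the paper does in \eqref{eq:mainvag-A} by working with $f\mu_n$ from the outset) before the capacity domination $\mu_n(B)\leq C\sqrt{\mathrm{Cap}_1(B)}$ becomes available. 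Where the paper removes its weight by letting $t\to 0$ in $\tfrac1t S_th\to h$ in $\mathcal{E}_1$, you remove the weight $w_\alpha=\alpha\widetilde{R_\alpha 1}$ by letting $\alpha\to\infty$, splitting into a monotone lower bound and a capacitary Egorov-type upper bound; this is workable, but the quasi-uniform convergence of $\mathbb{E}_\cdot[e^{-\alpha\zeta}]\downarrow 0$ off sets of small capacity also needs justification (quasi-continuity of $\alpha R_\alpha 1$ plus a Dini-type lemma for capacity), which you assert rather than prove. In short: same skeleton as the paper up to the Revuz-potential identity, a different and somewhat heavier endgame, and one unclosed hole at the infinite-measure limit interchange that the paper's $h\,dm$/Vitali device is specifically designed to fill.
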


Before beginning the proof of Theorem \ref{mainvague}, we state the following lemmas concerning the uniform boundedness of PCAFs. In contrast to Lemma \ref{EnergykappaLem}, which was proved using Proposition \ref{symSmooth}, these lemmas are proved in a similar way to the proof of Lemma \ref{EnergyLem} by using \[\int P_t \mathbb{E}_{\cdot}[(\widetilde{A_{\infty}^n})^2] d(m+\frac{\kappa}{2}+\frac{\nu_0}{2}) = \int P_t1\  U_1\mu_n d\mu_n \leq \int  U_1\mu_n d\mu_n  = \mathbb{E}_{m+\frac{\kappa}{2}+\frac{\nu_0}{2}}[(\widetilde{A_{\infty}^n})^2] .\]
\begin{lemma}\label{UIlem}
The following are equivalent.\\
{\rm (i)} There exists \(T>0\) such that \(\sup_n \mathbb{E}_m[(A_T^n)^2]<\infty \).\\
{\rm (ii)} There exists \(T>0\) such that \(\sup_n \mathbb{E}_m[(\widetilde{A_T^n})^2]<\infty \).\\
{\rm (iii)} For any \(T\geq 0\), \(\sup_n \mathbb{E}_m[(A_T^n)^2]<\infty \).\\
{\rm (iv)} For any \(T\geq 0\), \(\sup_n \mathbb{E}_m[(\widetilde{A_T^n})^2]<\infty \).\\
{\rm (v)} \(\sup_n \mathbb{E}_m[(\widetilde{A_{\infty}^n})^2]<\infty \).
\end{lemma}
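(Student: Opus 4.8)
The plan is to follow the template of the proof of Lemma \ref{EnergyLem}, but to take advantage of the fact that here only a single family \(\{A^n\}\) of PCAFs is involved, so that each \(A^n\) is genuinely nondecreasing. This monotonicity eliminates the integration-by-parts and Gr\"onwall steps that were needed in Lemma \ref{EnergyLem} to control the non-monotone differences \(A^n-A\). Concretely, I would prove the pointwise equivalences (i) \(\Leftrightarrow\) (ii) and (iii) \(\Leftrightarrow\) (iv), together with the cycle (iv) \(\Rightarrow\) (ii) \(\Rightarrow\) (v) \(\Rightarrow\) (iv); combining these gives the equivalence of all five conditions.

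First I would record the elementary two-sided bound that drives most of the implications. For each \(T\ge 0\), since \(e^{-T}\le e^{-s}\le 1\) on \([0,T]\) and \(A^n\) is increasing,
\[
e^{-T}A^n_T \;\le\; \widetilde{A^n_T}=\int_0^T e^{-s}\,dA^n_s \;\le\; A^n_T,
\]
so that \(\widetilde{A^n_T}\le A^n_T\le e^{T}\widetilde{A^n_T}\). Squaring and integrating against \(m\) shows that \(\sup_n\mathbb{E}_m[(A^n_T)^2]<\infty\) and \(\sup_n\mathbb{E}_m[(\widetilde{A^n_T})^2]<\infty\) are equivalent \emph{for the same} \(T\); this yields (i) \(\Leftrightarrow\) (ii) and (iii) \(\Leftrightarrow\) (iv) at once. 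The implication (iv) \(\Rightarrow\) (ii) is immediate (specialise to a single value of \(T\)), and (v) \(\Rightarrow\) (iv) follows because \(\widetilde{A^n_T}\) increases to \(\widetilde{A^n_\infty}\), whence \(\mathbb{E}_m[(\widetilde{A^n_T})^2]\le \mathbb{E}_m[(\widetilde{A^n_\infty})^2]\) for every \(T\).

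The one step requiring real work, and the main obstacle, is (ii) \(\Rightarrow\) (v): upgrading a uniform bound at a finite time to one at \(\infty\). Here I would reuse the additivity decomposition \(\widetilde{A^n_\infty}=\widetilde{A^n_T}+e^{-T}\,\widetilde{A^n_\infty}\circ\theta_T\), together with the \(m\)-symmetry of \(X\) and \(P_T 1\le 1\), which give \(\mathbb{E}_m\mathbb{E}_{X_T}[(\widetilde{A^n_\infty})^2]\le \mathbb{E}_m[(\widetilde{A^n_\infty})^2]\), and the inequality \((a+b)^2\le C_\varepsilon a^2+\varepsilon b^2\) with \(\varepsilon=e^{T}>1\) and \(C_\varepsilon=2+\tfrac{1}{\varepsilon-1}\), exactly as in the passage leading to (\ref{eq:EnegyLem1}). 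This produces
\[
\mathbb{E}_m\!\left[(\widetilde{A^n_\infty})^2\right]\le C_\varepsilon\,\mathbb{E}_m\!\left[(\widetilde{A^n_T})^2\right]+e^{-T}\,\mathbb{E}_m\!\left[(\widetilde{A^n_\infty})^2\right],
\]
and since \(e^{-T}<1\) the last term can be absorbed on the left, giving \(\mathbb{E}_m[(\widetilde{A^n_\infty})^2]\le \tfrac{C_\varepsilon}{1-e^{-T}}\,\mathbb{E}_m[(\widetilde{A^n_T})^2]\); taking the supremum over \(n\) delivers (v) from (ii). The only subtlety is that the absorption requires \(\mathbb{E}_m[(\widetilde{A^n_\infty})^2]<\infty\) for each fixed \(n\) before it can be rearranged; this is guaranteed in the present setting by \(\mathbb{E}_m[(\widetilde{A^n_\infty})^2]\le \mathbb{E}_{m+\kappa/2+\nu_0/2}[(\widetilde{A^n_\infty})^2]=\mathcal{E}_1(U_1\mu_n)<\infty\) via Proposition \ref{main2-2}, or, failing a priori finiteness, by first running the whole argument for the truncations \({\bf 1}_{F_k}A^n\) with \(U_1({\bf 1}_{F_k}\mu_n)\) bounded (as in Lemma \ref{nu_0Lemma}) with a constant independent of \(k\), and then letting \(k\to\infty\) by monotone convergence.
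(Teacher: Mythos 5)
Your proof is correct. The paper gives no written-out proof of this lemma; it only remarks that it ``is proved in a similar way to the proof of Lemma \ref{EnergyLem}'' using the contraction \(\int P_t \mathbb{E}_{\cdot}[(\widetilde{A_{\infty}^n})^2]\, d(m+\frac{\kappa}{2}+\frac{\nu_0}{2}) \leq \mathbb{E}_{m+\frac{\kappa}{2}+\frac{\nu_0}{2}}[(\widetilde{A_{\infty}^n})^2]\), which would mean rerunning the integration-by-parts identity (\ref{eq:bubunsekibun}) and the Gr\"onwall argument for the steps linking \(A^n_T\) and \(\widetilde{A^n_T}\). On the crucial step — upgrading a bound at one finite time \(T\) to a bound at \(\infty\) — you do exactly what the paper does in (i) \(\Rightarrow\) (v) of Lemma \ref{EnergyLem}: the shift decomposition \(\widetilde{A^n_{\infty}}=\widetilde{A^n_T}+e^{-T}\widetilde{A^n_{\infty}}\circ\theta_T\), the \(m\)-symmetry giving \(\mathbb{E}_m\mathbb{E}_{X_T}[(\widetilde{A^n_{\infty}})^2]\leq \mathbb{E}_m[(\widetilde{A^n_{\infty}})^2]\), and absorption via \((a+b)^2\leq C_{\varepsilon}a^2+\varepsilon b^2\) with \(\varepsilon=e^T\). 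Where you genuinely diverge is in dispensing with (\ref{eq:bubunsekibun}) and Gr\"onwall altogether: since here only single PCAFs appear rather than differences \(A^n-A\), the monotonicity of \(A^n\) gives the pointwise two-sided bound \(\widetilde{A^n_T}\leq A^n_T\leq e^T\widetilde{A^n_T}\) (the upper half of which the paper itself uses in the proof of Theorem \ref{submain1}), making (i) \(\Leftrightarrow\) (ii) and (iii) \(\Leftrightarrow\) (iv) trivial, and monotone convergence of \(\widetilde{A^n_T}\nearrow\widetilde{A^n_{\infty}}\) gives (v) \(\Rightarrow\) (iv) for free. This buys a shorter and more transparent argument at no loss of generality for this lemma (the Gr\"onwall machinery is only needed when the process compared is not monotone, as in Lemma \ref{EnergyLem}). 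You also explicitly flag and resolve a point the paper glosses over: the absorption step requires \(\mathbb{E}_m[(\widetilde{A^n_{\infty}})^2]<\infty\) for each fixed \(n\) before rearranging, and your appeal to Proposition \ref{main2-2} (valid since \(\mu_n\in\mathcal{S}_0\), so \(\mathcal{E}_1(U_1\mu_n)<\infty\)), with the truncation argument as a fallback, settles this cleanly.
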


\begin{lemma}\label{UIlemkappa}
The following are equivalent.\\
{\rm (i)} There exists \(T>0\) such that \(\sup_n \mathbb{E}_{m+\kappa +\nu_0}[(A_T^n)^2]<\infty \).\\
{\rm (ii)} There exists \(T>0\) such that \(\sup_n \mathbb{E}_{m+\kappa +\nu_0}[(\widetilde{A_T^n})^2]<\infty \).\\
{\rm (iii)} For any \(T\geq 0\), \(\sup_n \mathbb{E}_{m+\kappa +\nu_0}[(A_T^n)^2]<\infty \).\\
{\rm (iv)} For any \(T\geq 0\), \(\sup_n \mathbb{E}_{m+\kappa +\nu_0}[(\widetilde{A_T^n})^2]<\infty \).\\
{\rm (v)} \(\sup_n \mathbb{E}_{m+\kappa +\nu_0}[(\widetilde{A_{\infty}^n})^2]<\infty \).
\end{lemma}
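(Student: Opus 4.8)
The plan is to follow the scheme of Lemma \ref{EnergyLem} in its single–sequence form (the $m$–version being Lemma \ref{UIlem}), with the role previously played by the $m$–symmetry of $P_t$ and the contraction $P_t1\le1$ now taken over by the displayed inequality stated just before Lemma \ref{UIlem}. First I would reduce to the comparable measure $\lambda:=m+\frac{\kappa}{2}+\frac{\nu_0}{2}$. Since $\lambda\le m+\kappa+\nu_0\le 2\lambda$, each of the five conditions is a statement of the form $\sup_n(\cdot)<\infty$ and is therefore unaffected when $m+\kappa+\nu_0$ is replaced by $\lambda$; so it suffices to prove the equivalences for $\lambda$.

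Next I would record the elementary pointwise sandwich, valid because $A^n$ is non-decreasing with $A_0^n=0$ and $\widetilde{A_t^n}=\int_0^t e^{-s}dA_s^n$:
\[\widetilde{A_T^n}\;\le\;A_T^n\;\le\;e^{T}\widetilde{A_T^n}\;\le\;e^{T}\widetilde{A_{\infty}^n}.\]
Integrating the squares against $\lambda$ yields at once the easy equivalences (i)$\Leftrightarrow$(ii) and (iii)$\Leftrightarrow$(iv), the trivial implications (iii)$\Rightarrow$(i) and (iv)$\Rightarrow$(ii), and—using $A_T^n\le e^{T}\widetilde{A_{\infty}^n}$ and $\widetilde{A_T^n}\le\widetilde{A_{\infty}^n}$—that (v) implies each of (i)–(iv). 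Hence the only substantive step is to close the loop, i.e. to prove (ii)$\Rightarrow$(v).

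For (ii)$\Rightarrow$(v) I would use additivity in the form $\widetilde{A_{\infty}^n}=\widetilde{A_T^n}+e^{-T}\widetilde{A_{\infty}^n}\circ\theta_T$ together with the bound $(a+b)^2\le C_\varepsilon a^2+\varepsilon b^2$, choosing $\varepsilon=e^{T}$ and $C_\varepsilon=2+(\varepsilon-1)^{-1}$ exactly as in Lemma \ref{EnergyLem}. Writing $h_n(x):=\mathbb{E}_x[(\widetilde{A_{\infty}^n})^2]$, the cross term is $\mathbb{E}_\lambda\big[(\widetilde{A_{\infty}^n})^2\circ\theta_T\big]=\int_E P_Th_n\,d\lambda$, and here the displayed inequality supplies the needed sub-invariance:
\[\int_E P_Th_n\,d\lambda=\int_E P_T1\cdot U_1\mu_n\,d\mu_n\le\int_E U_1\mu_n\,d\mu_n=\mathbb{E}_\lambda\big[(\widetilde{A_{\infty}^n})^2\big].\]
Substituting and using $\varepsilon e^{-2T}=e^{-T}<1$ to absorb the cross term gives $\mathbb{E}_\lambda[(\widetilde{A_{\infty}^n})^2]\le\frac{C_\varepsilon}{1-e^{-T}}\,\mathbb{E}_\lambda[(\widetilde{A_T^n})^2]$, and taking $\sup_n$ derives (v) from (ii). Combined with the previous paragraph, all five conditions are equivalent.

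The one genuine obstacle is the displayed identity $\int_E P_th_n\,d\lambda=\int_E P_t1\cdot U_1\mu_n\,d\mu_n$ that furnishes this sub-invariance, and it is precisely here that the specific combination $m+\frac{\kappa}{2}+\frac{\nu_0}{2}$ is indispensable, since $\kappa$ and $\nu_0$ are individually neither symmetric nor excessive. I would establish it by repeating the computation of Proposition \ref{main2-2} with the extra weight $P_t$ inserted: the $m$-part is handled by moving $P_t$ onto the constant $1$ via $m$-symmetry, while for the $\kappa$- and $\nu_0$-parts one exploits that $h_n=2U^{2}_{A^n}U^1_{A^n}1$ is $2$-excessive (so $P_th_n$ is again $2$-excessive and $\int P_th_n\,d\nu_0$ is well defined as the increasing limit of Lemma \ref{nu_0Lemma}), applies \cite[Theorem 5.4.3(iv)]{CF} to the energy functional defining the $\nu_0$-contribution, and uses the nest approximations ${\bf 1}_{F_l}\kappa$ and ${\bf 1}_{F_k^{(n)}}\mu_n$ as in Lemma \ref{nu_0Lemma}; the three contributions then telescope to $\int_E P_t1\cdot U_1\mu_n\,d\mu_n$, after which $P_t1\le1$ gives the final inequality.
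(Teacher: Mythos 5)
Your proposal is correct and follows essentially the same route as the paper, which proves Lemma \ref{UIlemkappa} by running the scheme of Lemma \ref{EnergyLem} with the displayed sub-invariance $\int_E P_t\,\mathbb{E}_{\cdot}[(\widetilde{A_{\infty}^n})^2]\,d(m+\tfrac{\kappa}{2}+\tfrac{\nu_0}{2}) \le \mathbb{E}_{m+\frac{\kappa}{2}+\frac{\nu_0}{2}}[(\widetilde{A_{\infty}^n})^2]$ replacing the $m$-symmetry and contraction of $P_t$. Your final paragraph in fact supplies a justification of that displayed identity (via the computations of Proposition \ref{main2-2} and Lemma \ref{symSmooth} with $P_t$ inserted) that the paper asserts without proof, and your handling of the $\nu_0$-part correctly uses only monotonicity and subadditivity, so the argument is complete.
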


\begin{proof}[Proof of Theorem \ref{mainvague}]
We fix \(f\in C_c(E)\) and take a compact set \(K:=\) supp\((f)\).

We set \((fA^n)_t:=\int_0^t f(X_s)dA^n_s\) then, by \cite[Lemma 5.1.3]{FOT}, \(fd\mu_n \in \mathcal{S}_0\) and \(U_{(fA^n)}^{\alpha}1=U_{A^n}^{\alpha}f\) is a quasi-continuous version of \(U_{\alpha}(f\mu_n)\). 

For any bounded function \(h\in L^1(m)\) and \(t>0\), we have
\begin{eqnarray*}
\mathbb{E}_{hm} \left[\left(\int_0^t f(X_s)dA_s^n \right)^2\right]&\leq & e^{2t} \mathbb{E}_{hm} \left[(\widetilde{(fA^n)}_{t})^2\right]\\
&\leq & e^{2t} ||h||_{\infty} ||f||_{\infty}^2 \mathbb{E}_{m}[(\widetilde{A^n_{\infty}})^2],
\end{eqnarray*}
and so, by Lemma \ref{UIlem}, \(\{\int_0^t f(X_s)dA_s^n \}_n\) is uniformly integrable under \(\mathbb{E}_{hm}\). Since we are now assuming that \(A^n\) converges to \(A\), \(\mathbb{P}_x\)-almost surely for q.e. \(x\in E\) with the local uniform topology, we fix q.e. \(x\), and take \(\Omega\) satisfying \(\mathbb{P}_x(\Omega)=1\) and \(\sup_{s\leq t}|A^n_s(\omega)-A_s(\omega)|\) converges to \(0\) for \(\omega \in \Omega\). By \cite[Lemma 4.8]{O}, for any \(\omega \in \Omega\), \(\int_0^t f(X_s)dA_s^n(\omega)\) converges to \(\int_0^t f(X_s)dA_s (\omega)\), so \(\int_0^t f(X_s)dA_s^n\) converges to \(\int_0^t f(X_s)dA_s\), \(\mathbb{P}_{hm}\)-almost everywhere. By the Vitali convergence theorem, we have
\begin{eqnarray*}
\lim_{n\to \infty}\mathbb{E}_{hm} \left[\int_0^t f(X_s)dA_s^n \right]=\mathbb{E}_{hm} \left[\int_0^t f(X_s)dA_s \right].
\end{eqnarray*}
By the correspondence between Revuz measures and PCAFs, it holds that
\[\mathbb{E}_{hm} \left[\int_0^t f(X_s)dA_s^n \right]=\int_E S_th \cdot fd\mu_n\]
and
\[\mathbb{E}_{hm} \left[\int_0^t f(X_s)dA_s \right]=\int_E S_th \cdot fd\mu,\]
where \(S_th:=\int_0^t P_sh ds.\) Therefore we have
\begin{equation}\label{eq:main1-2}
\lim_{n\to \infty}\int_E \frac{1}{t}S_th \cdot fd\mu_n=\int_E \frac{1}{t}S_th \cdot fd\mu.
\end{equation}

By \cite[Lemma 2.1.4]{CF}, \(S_th\in \mathcal{F}\) for any \(h\in L^2(m)\) and \(\mathcal{E}(S_th,u)=\langle h-P_th,u \rangle_m\) holds for any \(u\in \mathcal{F}\). Hence we have \[\mathcal{E}_1\left(\frac{1}{t}S_th-h\right)=\frac{1}{t^2}\langle h-P_th,S_th \rangle_m -\frac{2}{t}\langle h-P_th,h \rangle_m +\mathcal{E}(h)+\left \|\frac{1}{t}S_th-h \right \|_{L^2(m)}\]
and the right-hand side converges to \(0\) as \(t\) tends to \(0\).

Suppose Assumption \ref{ass1} (1), then, by Proposition \ref{main2-2} and Lemma \ref{UIlemkappa}, \(\{\mathcal{E}_1(U_1(f\mu_n))\}_n\) is bounded.

Suppose Assumption \ref{ass1} (2). In a similar manner to the proof of Proposition \ref{main2-2}, we have
\begin{eqnarray}
\mathbb{E}_m[(\widetilde{(fA^n)}_{\infty})^2]
= 2\int_K R_21 \cdot U_{1}(f\mu_n) \cdot f d\mu_n. \label{eq:mainvag-A}
\end{eqnarray}
By (\ref{eq:Ukappa}), we have
\begin{eqnarray*}
2R_21(x)=1-\mathbb{E}_x[e^{-2\zeta}]\geq 1-\mathbb{E}_x[e^{-\zeta}]>1-C_K>0
\end{eqnarray*}
for q.e. \(x\in K.\) Combining this with (\ref{eq:mainvag-A}), we have
\begin{eqnarray*}
\mathbb{E}_m[(\widetilde{(fA^n)}_{\infty})^2]&\geq &(1-C_K)\mathcal{E}_1(U_1(f\mu_n)),
\end{eqnarray*}
and so \(\{\mathcal{E}_1(U_1(f\mu_n))\}_n\) is bounded.

Under Assumption \ref{ass1}, for any bounded function \(h\in \mathcal{F}\cap L^1(m)\), we have
\begin{eqnarray*}
\lefteqn{\left|\int_E hf d\mu_n -\int_E hf d\mu \right|}\\
 &\leq & \left|\int_E \left(\frac{S_th}{t}-h\right)f d\mu_n \right| +
\left|\int_E \frac{S_th}{t}f d\mu_n -\int_E \frac{S_th}{t}f d\mu \right| +
\left|\int_E \left(\frac{S_th}{t}-h\right)f d\mu \right|\\
&\leq & \left(\sup_{n}\sqrt{\mathcal{E}_1(U_1(f\mu_n))}+\sqrt{\mathcal{E}_1(U_1(f\mu))}\right)\sqrt{\mathcal{E}_1\left(\frac{1}{t}S_th-h\right)}+\left|\int_E \frac{S_th}{t}f d\mu_n -\int_E \frac{S_th}{t}f d\mu \right|\\
\end{eqnarray*}
Therefore, letting \(n\) tend to infinity and then \(t\) tend to \(0\), by \((\ref{eq:main1-2})\), we get
\begin{equation}\label{eq:main1-3}
\lim_{n\to \infty}\int_E h \cdot fd\mu_n=\int_E h \cdot fd\mu
\end{equation}
for any bounded function \(h\in \mathcal{F}\cap L^1(m)\). Since \((\mathcal{E}, \mathcal{F})\) is regular, \((\ref{eq:main1-3})\) holds for any \(h\in \mathcal{F}\) similarly, and by taking \(h\in\mathcal{F}\cap C_c(E)\) satisfying \(h=1\) on \(K\), we have
\begin{equation}\label{eq:main1-4}
\lim_{n\to \infty}\int_E fd\mu_n=\lim_{n\to \infty}\int_E h \cdot fd\mu_n=\int_E h \cdot fd\mu=\int_E  fd\mu.
\end{equation}
So the proof is completed.
\end{proof}

\begin{corollary}\label{corvague}
For \(\mu _n, \mu \in \mathcal{S}_0\), let \(A^n, A\in {\bf A}_c^+\) be the corresponding PCAFs respectively. Suppose that Assumption \ref{ass1} (1) holds. If \(A^n\) converges to \(A\) \(\mathbb{P}_x\)-almost surely for q.e. \(x\in E\) with the local uniform topology, \(U_1\mu_n\) converges to \(U_1\mu\) weakly in \(\mathcal{E}_1.\)
\end{corollary}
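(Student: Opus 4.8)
The plan is to recognize that $(\mathcal{F}, \mathcal{E}_1)$ is a Hilbert space and that weak convergence $U_1\mu_n \rightharpoonup U_1\mu$ amounts to $\mathcal{E}_1(U_1\mu_n, g) \to \mathcal{E}_1(U_1\mu, g)$ for every $g \in \mathcal{F}$. I would establish this by combining two ingredients—vague convergence of the measures and uniform $\mathcal{E}_1$-boundedness of the potentials—and then invoking the standard fact that a bounded sequence in a Hilbert space which converges on a dense subset converges weakly.

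First I would record the two inputs. Since Assumption \ref{ass1} (1) holds, Assumption \ref{ass1} holds, so Theorem \ref{mainvague} applies and yields that $\mu_n \to \mu$ vaguely; in particular $\int_E g\, d\mu_n \to \int_E g\, d\mu$ for every $g \in C_c(E)$. For the boundedness, I would take $\alpha = 1$ and $A = B = A^n$ in Proposition \ref{main2-2} to obtain $\mathcal{E}_1(U_1\mu_n) = \mathbb{E}_{m + \kappa/2 + \nu_0/2}[(\widetilde{A^n_\infty})^2]$. By Lemma \ref{UIlemkappa}, Assumption \ref{ass1} (1) is equivalent to $\sup_n \mathbb{E}_{m+\kappa+\nu_0}[(\widetilde{A^n_\infty})^2] < \infty$, and since $\mathbb{E}_{m+\kappa/2+\nu_0/2}$ is comparable to $\mathbb{E}_{m+\kappa+\nu_0}$, this gives $\sup_n \mathcal{E}_1(U_1\mu_n) =: M^2 < \infty$, i.e.\ $\{U_1\mu_n\}$ is a bounded sequence in $(\mathcal{F}, \mathcal{E}_1)$.

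Next I would run the density argument. For $g \in \mathcal{F} \cap C_c(E)$ the defining property of the $1$-potential gives $\mathcal{E}_1(U_1\mu_n, g) = \int_E g\, d\mu_n$ and $\mathcal{E}_1(U_1\mu, g) = \int_E g\, d\mu$, so vague convergence yields $\mathcal{E}_1(U_1\mu_n, g) \to \mathcal{E}_1(U_1\mu, g)$ on this class. To upgrade to arbitrary $g \in \mathcal{F}$, fix $g$ and $\varepsilon>0$ and use the $\mathcal{E}_1$-density of $\mathcal{F} \cap C_c(E)$ (regularity) to choose $h \in \mathcal{F} \cap C_c(E)$ with $\sqrt{\mathcal{E}_1(g-h)} < \varepsilon$. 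Then
\[
|\mathcal{E}_1(U_1\mu_n - U_1\mu, g)| \le |\mathcal{E}_1(U_1\mu_n - U_1\mu, h)| + |\mathcal{E}_1(U_1\mu_n - U_1\mu, g-h)|,
\]
where Cauchy--Schwarz and the uniform bound control the second term by $(M + \sqrt{\mathcal{E}_1(U_1\mu)})\,\varepsilon$, while the first term vanishes as $n \to \infty$ by the case just treated. As $\varepsilon$ is arbitrary, $\mathcal{E}_1(U_1\mu_n, g) \to \mathcal{E}_1(U_1\mu, g)$ for all $g \in \mathcal{F}$, which is the desired weak convergence.

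The substantive content is entirely front-loaded into Theorem \ref{mainvague} and the energy identity of Proposition \ref{main2-2}, so the only genuine obstacle is verifying that Assumption \ref{ass1} (1) delivers the uniform $\mathcal{E}_1$-bound on the potentials; this is precisely where Lemma \ref{UIlemkappa} and the comparability of the reference measures $m+\kappa/2+\nu_0/2$ and $m+\kappa+\nu_0$ are essential. Once boundedness is secured, the passage from vague convergence (testing against $C_c(E)$) to weak $\mathcal{E}_1$-convergence (testing against all of $\mathcal{F}$) is routine.
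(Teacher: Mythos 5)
Your proposal is correct and follows exactly the route of the paper, whose proof of Corollary \ref{corvague} is a one-line citation of Theorem \ref{mainvague}, Proposition \ref{main2-2}, and the uniform bound on \(\{\mathcal{E}_1(U_1\mu_n)\}_n\) (the latter obtained, as you do, from Assumption \ref{ass1} (1) via Lemma \ref{UIlemkappa} and the energy identity, just as in the proof of Theorem \ref{mainvague}). Your write-up merely makes explicit the standard Hilbert-space density argument that the paper leaves implicit, and it is carried out correctly.
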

\begin{proof}
This follows from Theorem \ref{mainvague}, Proposition \ref{main2-2} and the boundedness of \(\{ \mathcal{E}_1(U_1\mu_n)\}_n\).
\end{proof}

\section{Examples}\label{secexample}
\begin{example}\label{ex1}
The first example illustrates that the vague convergence of Revuz measures does not imply convergence of the corresponding PCAFs in general. In this case, the corresponding PCAF does not even converge in distribution.

Let \(\tau^{(k)}\) be \([0, \infty)\)-valued independent exponentially distributed random variables satisfying \(\mathbb{P} (\tau^{(k)} \leq t)=1-e^{-t}\) for any \(t\geq 0\), and define \(\tau^{(0)}:=0.\)\\
Let \(m\) be the Lebesgue measure on \(\mathbb{R}\) and we define the \(m\)-symmetric Hunt process \(X\) on \(\mathbb{R}\) by
\[X_t:=\left\{\begin{split} &\ \ x& : \sum_{k=0}^{2i}\tau^{(k)}\leq t< \sum_{k=0}^{2i+1}\tau^{(k)},\\ &-x& :\sum_{k=0}^{2i+1}\tau^{(k)}\leq t<\sum_{k=0}^{2i+2}\tau^{(k)},\end{split} \right.
\]
for \(t\geq 0\), \(x\in \mathbb{R}\) and \(i\in \mathbb{Z}_{\geq 0}.\) This is one of \textit{pure jump step processes}. See \cite[Section 2.2.1]{CF} for details.

We define the PCAF \(A^n\) by
\[A_t^n:=\int_0^t (\sin{(nX_s)}+1)ds.\]
The Revuz measure corresponding to \(A^n\) is \(\mu ^n(dx)=(\sin{(nx)}+1)dx.\) We define \(\mu\) to be the Lebesgue measure and \(A_t:=t\). Then, by the Riemann-Lebesgue lemma, \(\mu_n\) converges vaguely to \(\mu \), but \(A^n\) does not converge to \(A\) with respect to the local uniform topology in distribution as \(n\to \infty\) for any starting point \(x\). Moreover, this does not converge in distribution for any fixed time \(t\).
\begin{proof}
Fix \(t>0\). By the representation of \(X\), we have
\begin{eqnarray*}
A_t^n=\sum_{j=0}^{\infty}(1+(-1)^j\sin{(nx)})\left((t-\sum_{k=0}^{j}\tau^{(k)})_+\wedge \tau^{(j+1)}\right).
\end{eqnarray*}
By calculation, we have
\[
\varliminf_{n\to \infty} \mathbb{E}_x[A_t^n]=\varliminf_{n\to \infty} t(1+\sin{(nx)}\frac{\sinh t}{e^t})\ <\ t=\mathbb{E}_x[A_t],
\]
so \(A^n_t\) does not converge to \(A_t\) in distribution.
\end{proof}
Of independent interest, in a similar way to the above, we can also prove the sequence \(\{A^n\}_n\) under \(\mathbb{P}_x\) is not tight with respect to the local uniform topology for any \(x\). See \cite{JS, W} for details on the tightness and the relation between the tightness and convergence in distribution with respect to the local uniform topology.

We remark that \(\mu_n\) does not converge to \(\mu\) in \(\rho\) in this example, so we cannot apply Theorem \ref{main2}, \ref{submain1}.

\end{example}

\begin{example}\label{ex2}
Let \(X\) be Brownian motion on \(\mathbb{R}\). Then, the capacity of any one point is positive, so, for any \(x\in \mathbb{R}\), the Dirac measure \(\delta_x\) is a smooth measure and the corresponding PCAF \(L^x\) is a local time at \(x\in \mathbb{R}.\) In this case, \(\kappa =0\) and \(\nu_0=0\). We set the 1-order Green's function \(g_1\) by \[g_1(x,y)=\int_0^{\infty}e^{-t}\frac{1}{\sqrt{2\pi t}}e^{-\frac{|x-y|^2}{2t}}.\]
It follows from \cite[Exercise 4.2.2]{FOT} that \(\delta_x \in \mathcal{S}_{0}\), \(\mathcal{E}_1(U_1\delta_x, U_1\delta_y)=g_1(x,y)\) and \(U_1\delta_x(y)=g_1(x,y)\) for  any \(x, y\in \mathbb{R}.\) Moreover, \(\sup_{x\in \mathbb{R}}||U_1 \delta_x||_{\infty}=g_1(0,0) <\infty\) holds. For a sequence \(x_n \in \mathbb{R}\) converging to \(x\in \mathbb{R}\), by the continuity of the 1-order Green's function  of Brownian motion on \(\mathbb{R}\), \(\delta_{x_n}\) converges to \(\delta_x\) in \(\rho.\) By Corollary \ref{submain2}, we have
\[\lim_{n\to \infty}\mathbb{E}_y\left[ \sup_{t\leq T}|L^{x_n}_t-L^{x}_t|^2 \right]=0\]
for any \(y\in \mathbb{R}\) and \(T\geq 0\), and, by Theorem \ref{main2}, 
\[\lim_{n\to \infty}\mathbb{E}_m\left[ \sup_{t\leq T}|L^{x_n}_t-L^{x}_t|^2 \right]=0\]
for the Lebesgue measure \(m.\)
\end{example}

\begin{example}\label{ex3}
We give an example of Theorem \ref{main2} for absolutely continuous measures with respect to the underlying measure. Let \((\mathcal{E}, \mathcal{F})\) be a regular Dirichlet form on \(L^2(E;m)\) associated with a Hunt process \(X\). For \(f_n, f\in L^2(E;m)\), we assume that \(f_n\) converges to \(f\) in \(L^2(E;m)\), and we set \(d\mu_n:=f_ndm\) and \(d\mu:=fdm\). Then, their corresponding PCAFs are \(A_t^n:=\int_0^t f_n(X_s)ds\) and \(A_t:=\int_0^t f(X_s)ds\) respectively. It follows that
\[\rho(\mu_n, \mu)= \mathcal{E}_1(R_1f_n-R_1f)=\langle f_n-f, R_1(f_n-f)\rangle_m \leq \|f_n-f\|^2_m \]
and this converges to \(0\). By Theorem \ref{main2}, \(A^n\) converges to \(A\) in \(L^2(\mathbb{P}_{m+\kappa+\nu_0})\) with the local uniform topology.
\end{example}

\begin{example}\label{ex4}
In contrast to the previous example, we give an example of a singular measure. Let \(X\) be Brownian motion on \(\mathbb{R}\) and \(m\) be the Lebesgue measure on \(\mathbb{R}\). We construct the Cantor set according to \cite[pp.144-145]{Ru} as follows. Set \(C_1:=[0, \frac{1}{3}]\cup [\frac{2}{3},1]\) and, inductively, \(C_{n+1}:=\frac{1}{3}C_n \cup (\frac{2}{3}+\frac{1}{3}C_n)\) for positive integers \(n.\) Here we used the notation \(a+b[c,d]:=[a+bc, a+bd]\) for \(a,b,c,d\in \mathbb{R}\). Then \(C:=\cap_{n=1}^{\infty}C_n\) is called the Cantor set. We set \(d\mu_n:=\frac{1}{m(C_n)}m|_{C_n}\), then \(\mu_n\) converges weakly to \(\mu\) because their distribution functions converge uniformly by \cite[p.145]{Ru}. Since \(m(C)=0\) and \(\mu(C)=1\), \(\mu\) is singular with respect to \(m\). For the 1-order Green's function \(g_1\), it holds that
\[\int_E \int_E g_1(x,y) d\mu(x) d\mu(y) \leq g_1(0,0)<\infty, \]
so \(\mu \in \mathcal{S}_0\). Let \(A^n, A\) be PCAFs corresponding to \(\mu_n, \mu\) respectively. We remark that it holds that \(A^n_t:=\int_0^t \frac{1}{m(C_n)} {\bf 1}_{C_n}(X_s)ds,\) but we cannot obtain an explicit expression for \(A_t\). By \cite[p.145]{Ru}, \(\mu_n\) converges to \(\mu\) in \(\rho\). Hence, by Theorem \ref{main2}, \(A^n\) converges to \(A\) in \(L^2(\mathbb{P}_{m})\) with respect to the local uniform topology.
\end{example}

\begin{example}\label{ex4-2}
In the following simple example, we show that convergence with respect to the killing measure \(\kappa\) is needed for Theorem \ref{main2} in general. Let \(E\) be an open interval \((0,1)\) and \(m\) be the Lebesgue measure on \(E\). We set \(g(x):=x^{-2}\), \(\mathcal{F}:=L^2(E;dx) \cap L^2(E;gdx)\) and \(\mathcal{E}(f):=\int_E |f|^2g dx\). Then \((\mathcal{E}, \mathcal{F})\) is a regular Dirichlet form on \(L^2(E;dx)\) and the associated Hunt process \(X\) is a process that does not move until an exponentially distributed lifetime with rate \(g,\) that is, \(P_tu(x)=\mathbb{E}_x[\exp{(\int_0^t g(x)ds)}u(x)]=\exp{(g(x)t)}u(x)\). This Dirichlet form is called a perturbed Dirichlet form, and \(d\kappa = gdm\). See \cite[\S 5.1]{CF}.

We set  \(f_n(x):=(1+g(x))\left(1+\frac{1}{\sqrt{n}}\sin{(nx)}\right), f(x):=1+g(x), d\mu_n = f_ndx, d\mu :=fdx \in \mathcal{S}_0\) and \(A^n, A \in {\bf A}_c^+\) be the corresponding PCAFs respectively. Since \(R_1u=\frac{u}{1+g}\) for \(u\in L^2(E;dx)\), we have
\begin{eqnarray*}
\lim_{n\to \infty}\mathbb{E}_m\left[\sup_{t\leq T}|A_t^n-A_t|^2 \right] &\leq & \lim_{n\to \infty} e^{2T} \mathbb{E}_m\left[\left( \int_0^{\infty}e^{-t}|f_n-f|(X_t)dt \right)^2 \right]\\
&= & \lim_{n\to \infty} 2e^{2T} \int_0^1 R_21\ |f_n-f|\ R_1|f_n-f|\ dx\\
&= & \lim_{n\to \infty} 2e^{2T} \int_0^1 \frac{(\sin{(nx)})^2}{n}dx \\
&=& 0.
\end{eqnarray*}
However, we have
\begin{eqnarray*}
\lim_{n\to \infty} \rho(\mu_n, \mu) &=&\lim_{n\to \infty}  \int_0^1 (f_n-f)\ R_1(f_n-f)\ dx\\
&=&\lim_{n\to \infty} \int_0^1 (1+g(x))\frac{(\sin{(nx)})^2}{n}dx\\
&=& \lim_{n\to \infty} \int_0^1 \frac{(\sin{(nx)})^2}{n}dx + \lim_{n\to \infty} \int_0^n \frac{(\sin{(y)})^2}{y^2}dy\\
&=& \infty.
\end{eqnarray*}
In the third equation, we exchanged \(y=nx.\) Since the distribution of the lifetime \(\zeta\) under \(\mathbb{P}_x\) is an exponential distribution with rate \(g(x)\), we have
\begin{eqnarray*}
\lim_{n\to \infty}\mathbb{E}_{\kappa}\left[|A_t^n-A_t|^2 \right] 
&= & \lim_{n\to \infty} \int_0^1 \mathbb{E}_x[(t\wedge \zeta)^2] (1+g(x))^2  \frac{(\sin{(nx))^2}}{n} g(x)dx\\
&\geq & \lim_{n\to \infty} \int_0^1 \mathbb{E}_x[\zeta ^2 {\bf 1}_{\{\zeta \leq t\}}] (1+g(x))^2  \frac{(\sin{(nx))^2}}{n} g(x)dx\\
&\geq & \lim_{n\to \infty} \int_0^1 \frac{C_t}{g(x)^2} (1+g(x))^2  \frac{(\sin{(nx))^2}}{n} g(x)dx\\
&=& \infty.
\end{eqnarray*}
Hence, \(A^n\) converges to \(A\) in \(L^2(\mathbb{P}_m)\) but neither \(\mu_n\) nor \(A^n\) converge to \(\mu\) in \(\rho\) and \(A\) in \(L^2(\mathbb{P}_{\kappa})\) respectively.
\end{example}

\begin{example}\label{ex4-3}
In the following example, we show that convergence with respect to \(\nu_0\) is needed for Theorem \ref{main2} in general.

Let \(E\) be an open interval \((0,\infty)\), \(m\) be the Lebesgue measure on \(E\) and \(X\) be an absorbed Brownian motion on \(E.\) Then the \(\alpha\)-order Green's function is 
\[g_{\alpha}(x,y)=\frac{1}{\sqrt{2\alpha}}\left(e^{-\sqrt{2\alpha}|x-y|}-e^{-\sqrt{2\alpha}|x+y|} \right) \leq C x\]
and we have \(R_{2}1(x)=\frac{1}{2}(1-e^{-2x}) \leq 2x .\) Set \(f_n(x):=n^{3/2}{\bf 1}_{(0, 1/n)}\), \(d\mu_n:= f_n dx, d\mu:=0\) and \(A^n, A\) are corresponding PCAFs. Similarly to Example \ref{ex4-2}, we have
\begin{eqnarray*}
\lim_{n\to \infty}\mathbb{E}_m\left[\sup_{t\leq T}|A_t^n-A_t|^2 \right] &\leq &  \lim_{n\to \infty} e^{2T} \int R_21\ f_n\ R_1f_n\ dx\\
&\leq & \lim_{n\to \infty} C n^3 \int_0^{1/n}\int_0^{1/n} x^2 dxdy \\
&=& \lim_{n\to \infty} C \frac{1}{n}\\
&=&0
\end{eqnarray*}
and
\begin{eqnarray*}
\lim_{n\to \infty} \rho(\mu_n, \mu) &=&\lim_{n\to \infty}  \frac{n^3}{\sqrt{2}}\int_0 ^{1/n}\int_0 ^{1/n} (e^{-\sqrt{2}|x-y|}-e^{-\sqrt{2}|x+y|}) dxdy\\
&\geq & \lim_{n\to \infty}  C n^3 \int \int_{\{|x-y|\leq \frac{1}{2n}, |x+y|\geq \frac{1}{n},  0\leq x,y \leq \frac{1}{n}\}} (e^{-\frac{\sqrt{2}}{2n}}-e^{-\frac{\sqrt{2}}{n}}) dxdy\\
&\geq & \lim_{n\to \infty}  C n (1-e^{-\frac{\sqrt{2}}{2n}})\\
&=& C,
\end{eqnarray*}
where \(C\) is a constant. Hence, by Theorem \ref{main2}, \(A^n\) converges to \(A\) in \(L^2(\mathbb{P}_m)\) but neither \(\mu_n\) nor \(A^n\) converge to \(\mu\) in \(\rho\) and \(A\) in \(L^2(\mathbb{P}_{\nu_0})\) respectively.
\end{example}

\begin{example}\label{ex5}
We present the following sufficient condition under which Assumption \ref{ass1} holds.

Any Hunt process for which every point has positive capacity satisfies Assumption \ref{ass1}.
\begin{proof}
Let \(K\) be a compact set, and take a relatively compact open set \(D\) such that \(K \subset D^c\). Then we have \[\mathbb{E}_x[e^{-\zeta}]\leq \mathbb{E}_x[e^{-\sigma_{D^c}}]\] for \(x\in K\), where \(\sigma_{D^c}\) is the first hitting time to \(D^c.\) By \cite[Corollary 3.2.3]{CF}, \(\mathbb{E}_x[e^{-\sigma_{D^c}}]\) is a quasi-continuous version of the 1-equilibrium potential of \(D^c\), and is continuous by the assumption that any polar set is the empty set. Thus we have \(\sup_{x\in K}\mathbb{E}_x[e^{-\sigma_{D^c}}]<1\).
\end{proof}
\end{example}

\begin{appendix}
\section{Proof of Lemma \ref{EnergykappaLem}} \label{sec app}

\begin{proof}
At first, we prove (iii) implies (ii). We assume (iii) and this is equivalent to the convergence of \(\mu_n\) to \(\mu\) in \(\rho\) by Proposition \ref{main2-2}.
In general, \(m \not \in \mathcal{S}_{00}\) because \(m(E)\) may not be finite. However, by using the monotone convergence theorem and \(\|U_1m\|_{\infty}=\|R_11\|_{\infty}\leq 1\), in a similar manner to the proof of Theorem \ref{submain1}, we have
\begin{eqnarray*}
\varlimsup_{n\to \infty}\mathbb{E}_m\left[\sup_{0\leq t\leq T}|M_t^{[U_1\mu_n]}-M_t^{[U_1\mu]}|^2\right] \leq \varlimsup_{n\to \infty} 8 (1+T)\mathcal{E}_1(U_1\mu_n - U_1\mu)=0.
\end{eqnarray*}
Similarly to the proof of Theorem \ref{submain1}, we have

\begin{eqnarray*}
\varlimsup_{n\to \infty}\mathbb{E}_m\left[\sup_{0\leq t\leq T}\left|\int_0^t \widetilde{U_1\mu_n}(X_s)ds-\int_0^t \widetilde{U_1\mu}(X_s)ds\right|^2\right]
&\leq & \varlimsup_{n\to \infty}T e^T \int_E  \left|\widetilde{U_1\mu_n}-\widetilde{U_1\mu}\right|^2 dm\\
&\leq &  \varlimsup_{n\to \infty}T e^T \mathcal{E}_1\left(U_1\mu_n - U_1\mu\right)\\
&=&0
\end{eqnarray*}
and
\begin{eqnarray*}
\varlimsup_{n\to \infty}\mathbb{E}_m\left[\sup_{0\leq t\leq T}\left|\widetilde{U_1\mu_n}(X_0)- \widetilde{U_1\mu}(X_0)\right|^2\right]&=&\varlimsup_{n\to \infty}\int_E \left|\widetilde{U_1\mu_n}(x)-\widetilde{U_1\mu}(x)\right|^2 dm(x)\\
&\leq & \varlimsup_{n\to \infty} \mathcal{E}_1(U_1\mu_n - U_1\mu)\\
&=&0.
\end{eqnarray*}
Moreover, by symmetry of the transition semigroups, we have
\begin{eqnarray*}
\varlimsup_{n\to \infty} \sup_{0\leq t\leq T}\mathbb{E}_m\left[\left|\widetilde{U_1\mu_n}(X_t)- \widetilde{U_1\mu}(X_t)\right|^2\right]
&=&\varlimsup_{n\to \infty}\sup_{t\leq T}
\int_E P_t\left(\left|\widetilde{U_1\mu_n}-\widetilde{U_1\mu}\right|^2 \right)(x)dm(x)\\
&= &\varlimsup_{n\to \infty}\sup_{t\leq T}
\int_E \left|\widetilde{U_1\mu_n}-\widetilde{U_1\mu}\right|^2(x) P_t1(x) dm(x)\\
&\leq &\varlimsup_{n\to \infty} \int_E \left|\widetilde{U_1\mu_n}-\widetilde{U_1\mu}\right|^2(x) dm(x)\\
&=&0.
\end{eqnarray*}
Hence, by the Fukushima decomposition (\ref{eq:NTTU1}), we have
\[\lim_{n\to \infty}\sup_{0\leq t\leq T}\mathbb{E}_m\left[|A_t^n-A_t|^2\right]=0.\] 
Similarly to the above, by (\ref{eq:Ukappa}), we have
\begin{eqnarray*}
\varlimsup_{n\to \infty}\mathbb{E}_{\kappa}\left[\sup_{0\leq t\leq T}|M_t^{[U_1\mu_n]}-M_t^{[U_1\mu]}|^2\right] \leq \varlimsup_{n\to \infty} 8 (1+T)\mathcal{E}_1(U_1\mu_n - U_1\mu)=0.
\end{eqnarray*}
and, by the Beurling-Deny decomposition, we have
\begin{eqnarray*}
\varlimsup_{n\to \infty}\mathbb{E}_{\kappa}\left[\sup_{0\leq t\leq T}\left|\int_0^t \widetilde{U_1\mu_n}(X_s)ds-\int_0^t \widetilde{U_1\mu}(X_s)ds\right|^2\right]
&\leq & \varlimsup_{n\to \infty}T e^T \int_E  \left|\widetilde{U_1\mu_n}-\widetilde{U_1\mu}\right|^2 d\kappa \\
&\leq &  \varlimsup_{n\to \infty}T e^T \mathcal{E}\left(U_1\mu_n - U_1\mu\right)\\
&=&0.
\end{eqnarray*}

By Lemma \ref{symSmooth}, we have 
\begin{eqnarray*}
\varlimsup_{n\to \infty} \sup_{0\leq t\leq T}\mathbb{E}_{\kappa}\left[\left|\widetilde{U_1\mu_n}(X_t)- \widetilde{U_1\mu}(X_t)\right|^2\right]
&= &\varlimsup_{n\to \infty} \sup_{0\leq t\leq T}\mathbb{E}_{\kappa}\left[\left|\mathbb{E}_{X_t}[\widetilde{A_{\infty}^n} -\widetilde{A_{\infty}} ]\right|^2\right]\\
&\leq &\varlimsup_{n\to \infty} \sup_{0\leq t\leq T}\mathbb{E}_{\kappa}\left[\mathbb{E}_{X_t}\left[(\widetilde{A_{\infty}^n} -\widetilde{A_{\infty}} )^2\right]\right]\\
&\leq &\varlimsup_{n\to \infty}
2 \left( \sqrt{\mathcal{E}_1(U_1\mu_n)} +\sqrt{\mathcal{E}_1(U_1\mu)} \right) \sqrt{\mathcal{E}_1(U_1(\mu_n-\mu))}\\
&=&0.
\end{eqnarray*}

Hence, by the Fukushima decomposition (\ref{eq:NTTU1}), we have
\[\lim_{n\to \infty}\sup_{0\leq t\leq T}\mathbb{E}_{\kappa}\left[|A_t^n-A_t|^2\right]=0.\] 
Using the equation \(\int U_2\nu\ d\nu_0 = \int \mathbb{E}_x[e^{-2\zeta} {\bf 1}_{\{\partial\}}(X_{\zeta -})] d\nu(x)\) for \(\nu \in \mathcal{S}_0\) (\cite[Theorem 5.4.3 (iv)]{CF}), similarly to the above, we have
\[\lim_{n\to \infty}\sup_{0\leq t\leq T}\mathbb{E}_{\nu_0}\left[|A_t^n-A_t|^2\right]=0.\]

In the same way as the proof of Lemma \ref{EnergyLem}, (ii) implies (i).

We assume (i). In a similar manner to the proof of Lemma \ref{EnergyLem}, by using Lemma \ref{symSmooth}, we have
\begin{eqnarray*}
\mathbb{E}_{m+\frac{\kappa}{2}+\frac{\nu_0}{2}}\left[|\widetilde{A^n_{\infty}}-\widetilde{A_{\infty}}|^2\right] \leq C_{\varepsilon}\mathbb{E}_{m+\kappa+\nu_0}\left[|\widetilde{A^n_{T}}-\widetilde{A_{T}}|^2\right] + C \varepsilon e^{-2T} \left( \mathcal{E}_1(U_1\mu_n -U_1\mu) + \sqrt{\mathcal{E}_1(U_1\mu_n -U_1\mu)}\right),
\end{eqnarray*}
where \(C_{\varepsilon}=2+\frac{1}{\varepsilon -1}\) for \(\varepsilon >1\). For any \(\delta>0\), we can take \(T>0\) satisfying \(\frac{C}{e^{T}-C} \leq \delta\) and set \(\varepsilon = e^T\). We have
\[\varlimsup_{n\to \infty} \mathbb{E}_{m+\frac{\kappa}{2}+\frac{\nu_0}{2}}\left[|\widetilde{A^n_{\infty}}-\widetilde{A_{\infty}}|^2\right] \leq \delta^2\]
and so (iii) holds.

\end{proof}

\end{appendix}

\section*{Declaration of competing interest}
The author declares that he has no known competing financial interests that could have appeared to influence the work reported in this paper.

\section*{Acknowledgments}
This work was supported by JSPS KAKENHI Grant Number JP25K17270.


\begin{thebibliography}{99}
	\bibitem{AK} S. Andres, N. Kajino, Continuity and estimates of the Liouville heat kernel with applications to spectral dimensions, Probab. Theory Related Fields \textbf{166} (2016) 713-752. https://doi.org/10.1007/s00440-015-0670-4
	\bibitem{A} J. Az\'{e}ma, Th\'{e}orie g\'{e}n\'{e}rale des processus et retournement du temps. Annales scientifiques de l'\'{E}cole Normale Sup\'{e}rieure, Serie 4, Volume 6 (1973) 459-519. doi:10.24033/asens.1256.
	\bibitem{AKR} J. Az\'{e}ma, M. Kaplan-Duflo, D. Revuz, Mesure invariante sur les classes récurrentes des processus de Markov, Z. Wahrscheinlichkeitstheorie und Verw. Gebiete \textbf{8} (1967) 157–181. https://doi.org/10.1007/BF00531519
	\bibitem{BK} A. BenAmor, K. Kuwae, Feller property and convergence for semigroups of time-changed processes, preprint, arXiv:2411.19543.
	\bibitem{BB1} L. Beznea, N. Boboc, Strongly supermedian kernels and Revuz measures, Ann. Probab. \textbf{29} (2001), 418–436. https://doi.org/10.1214/aop/1008956337
	\bibitem{BB2} L. Beznea, N. Boboc, Smooth measures and regular strongly supermedian kernels generating sub-Markovian resolvents, ICPA98 (Hammamet). Potential Anal. \textbf{15} (2001) 77–87. https://doi.org/10.1023/A:1011276203807
	\bibitem{BB3} L. Beznea, N. Boboc, Fine Densities for Excessive Measures and the Revuz Correspondence. Potential Analysis \textbf{20} (2004) 61–83. https://doi.org/10.1023/A:1025527902503
	\bibitem{BBb} L. Beznea, N. Boboc, Potential theory and right processes, Mathematics and its Applications, 572. Kluwer Academic Publishers, Dordrecht, 2004. vi+370 pp.
	\bibitem{BG} R. M. Blumenthal, R. K. Getoor, Markov processes and potential theory,
Pure and Applied Mathematics, Vol. 29. Academic Press, New York-London, 1968.
	\bibitem{BG2} R. M. Blumenthal, R. K. Getoor, Additive functionals of Markov processes in duality. Trans. Amer. Math. Soc. 112 (1964), 131–163. https://doi.org/10.1090/S0002-9947-1964-0160269-0
	\bibitem{CF} Z.-Q. Chen, M. Fukushima, Symmetric Markov processes, time change, and boundary theory, Princeton University Press, Princeton, NJ, 2012.
		\bibitem{Cr} D. A. Croydon, Scaling limits of stochastic processes associated with resistance forms, Ann. Inst. Henri Poincar\'{e} Probab. Stat. \textbf{54} (2018), 1939–1968. https://doi.org/10.1214/17-AIHP861
	\bibitem{CHK} D. A. Croydon, B. Hambly, T. Kumagai, Time-changes of stochastic processes associated with resistance forms, Electron. J. Probab. \textbf{22} (2017), 1-41. https://doi.org/10.1214/17-EJP99
	\bibitem{D} E. B. Dynkin, Additive functionals of Markov processes and stochastic systems, Ann. Inst. Fourier (Grenoble) \textbf{25} (1975), 177–200. doi:10.5802/aif.578
		\bibitem{FG2} P. J. Fitzsimmons, R. K. Getoor, Smooth measures and continuous additive functionals of right Markov processes, It\~{o}'s stochastic calculus and probability theory, 31-49, Springer, Tokyo, 1996.	
		\bibitem{FG} P. J. Fitzsimmons, R. K. Getoor, Homogeneous random measures and strongly supermedian kernels of a Markov process, Electron. J. Probab. \textbf{8} (2003), 1-55. https://doi.org/10.1214/EJP.v8-142
	\bibitem{FOT} M. Fukushima, Y. Oshima, M. Takeda, Dirichlet forms and symmetric Markov processes. 2nd rev. and ext. ed, Walter de Gruyter \& Co., Berlin, 2011.
	\bibitem{GRV} C. Garban, R. Rhodes, V. Vargas, Liouville Brownian motion. Ann. Probab. \textbf{44} (2016) 3076-3110. https://doi.org/10.1214/15-AOP1042
	\bibitem{G} R.K. Getoor, Excessive Measures. Birkh\"{a}user Boston, Inc., Boston, 1990.
	\bibitem{JS} J. Jacod, A. N. Shiryaev, Limit theorems for stochastic processes, Second edition. Springer-Verlag, Berlin, 2003. 
	\bibitem{L} N. S. Landkof, Foundations of modern potential theory, Springer-Verlag, New York-Heidelberg, 1972.
	\bibitem{M} P.-A. Meyer, Fonctionnelles multiplicatives et additives de Markov. Annales de l'Institut Fourier, Volume 12 (1962), pp. 125-230. doi : 10.5802/aif.121. 
	\bibitem{MT}  H. P. McKean Jr., H. Tanaka, Additive functionals of the Brownian path. Mem. Coll. Sci. Univ. Kyoto Ser. A. Math. 33 (1960/61), 479–506. https://doi.org/10.1215/kjm/1250711998
	\bibitem{NTTU} Y. Nishimori, M. Tomisaki, K. Tsuchida, T. Uemura, On a convergence of positive continuous additive functionals in terms of their smooth measures, Published online 2025:1-30. doi:10.4153/S0008414X25000173
	\bibitem{N1} R. Noda, Convergence of local times of stochastic processes associated with resistance forms, preprint, arXiv:2305.13224.
	\bibitem{N} R. Noda, Continuity of the Revuz correspondence under the absolute continuity condition, preprint, arXiv:2501.10994.
	\bibitem{O} T. Ooi, Convergence of processes time-changed by Gaussian multiplicative chaos, Potential Analysis (Online first), https://doi.org/10.1007/s11118-025-10206-3
	\bibitem{Os} Y. Oshima, Semi-Dirichlet forms and Markov processes. De Gruyter Studies in Mathematics, \textbf{48}. Walter de Gruyter \& Co., Berlin, 2013.
	\bibitem{R} D. Revuz, Mesures Associees Aux Fonctionnelles Additives de Markov. I. Transactions of the American Mathematical Society \textbf{48} (1970) 501–531. https://doi.org/10.2307/1995386.
    \bibitem{Ru} W. Rudin, Real and complex analysis, third edition. McGraw-Hill, New York, 1987.
    \bibitem{S} M. Sharpe, General theory of Markov processes, Pure and Applied Mathematics, 133, Academic Press, Boston, MA, 1988.
    \bibitem{T} M. Takeda, Explosion by Killing and Maximum Principle in Symmetric Markov Processes, preprint, arXiv:2406.15974.
    \bibitem{V} V .A. Volkonski\u{\i}, Random substitution of time in strong Markov processes, Teor. Veroyatnost. i Primenen. 3 (1958) 332–350. https://doi.org/10.1137/1103025
    \bibitem{We} A. D. Wentzell, Non-negative additive functionals of Markov processes,
Dokl. Akad. Nauk SSSR, \textbf{137} (1961) 17–20 https://www.mathnet.ru/eng/dan24689
	\bibitem{W} W. Whitt, Stochastic-process limits. An introduction to stochastic-process limits and their application to queues. Springer-Verlag, New York, 2002. xxiv+602 pp. 	
\end{thebibliography}
\end{document}